\newtheorem*{rep@theorem}{\rep@title}
\newcommand{\newreptheorem}[2]{%
	\newenvironment{rep#1}[1]{%
		\def\rep@title{#2 \ref{##1}}%
		\begin{rep@theorem}}%
		{\end{rep@theorem}}}
\newtheoremstyle{stylename}
{15pt} 
{15pt} 
{\itshape} 
{} 
{\bfseries} 
{.} 
{.5em} 
{} 
\theoremstyle{stylename}
\newtheorem{theorem}{Theorem}[section]
\newtheorem{lemma}[theorem]{Lemma}
\newtheoremstyle{exampstyle}
{15pt} 
{15pt} 
{} 
{} 
{\bfseries} 
{.} 
{.5em} 
{} 
\theoremstyle{exampstyle} 
\newtheorem{definition}[theorem]{Definition}   
\newtheorem{example}[theorem]{Example}
\newtheorem{remark}[theorem]{Remark}
\titleformat{\chapter}[display]
  {\normalfont\sffamily\huge\bfseries\color{black}}
  {\chaptertitlename\ \thechapter}{20pt}{\Huge}
\bfseries\color{black}}
\begin{document}


\title {A family of flat  Minkowski planes over convex functions}
\author{ Duy Ho}
\affil{UAE University, PO Box 15551, Al Ain, United Arab Emirates}
\affil{duyho92@gmail.com}
	\date{ }

	\maketitle
\begin{abstract} Using suitable convex functions, we construct a new family of flat Minkowski planes whose automorphism groups are at least $3$-dimensional. These planes admit  groups of automorphisms isomorphic to the direct product of  $\mathbb{R}$ and the connected component of the affine group on $\mathbb{R}$.  We also determine isomorphism classes, automorphisms and possible Klein-Kroll types for our examples.
\end{abstract}

Keywords: Circle plane, Minkowski plane, automorphism group, convex function.

  
\section{Introduction}

Flat Minkowski planes are generalization of the geometry of nontrivial plane sections of the standard nondegenerated ruled quadric in the real 3-dimensional projective space $\mathbb{P}_3(\mathbb{R})$.  A flat Minkowski plane $\mathcal{M}$ can be classified based on the dimension $n$ of its automorphism group.  It is known that  $n$ is at most $6$, and  the plane $\mathcal{M}$ is determined when $n \ge 4$, cp.  \cite{schenkel1980}. The current open case of interest is when $n=3$, in which a list of possible connected groups of automorphisms of $\mathcal{M}$ was presented in \cite{brendan2017b}.

The purpose of this paper is to describe a new family of flat Minkowski planes that admits the 3-dimensional connected group
$$
\Phi_\infty= \{(x,y) \mapsto (x+b,ay+c) \mid a,b,c \in \mathbb{R}, a>0  \}
$$ 
as their groups of automorphisms.   
The construction presented here was motivated from a family of flat Laguerre planes of translation type  described by L\"{o}wen and Pf\"{u}ller \cite{lowen1987a}, \cite{lowen1987b}.
These Laguerre planes were constructed as follows: first a suitable convex function $f$ called ``strongly parabolic function" is defined, then  circles are generated as images of $f$ under the group $\Phi_\infty$.   In our construction, we introduce the notion of a ``strongly hyperbolic function'', which is adapted from that of strongly parabolic functions to satisfy the incidence axioms of flat Minkowski planes.

The paper is organized as follows. Section 2 contains the preliminary results and examples. In Section 3, we define strongly hyperbolic functions and derive necessary properties. In Section 4, we describe  a family of flat Minkowski planes using strongly hyperbolic functions. 
The paper ends with a discussion on isomorphism classes, automorphisms and Klein-Kroll types of these planes in Section 5. 
\section{Preliminaries}
\subsection{Flat Minkowski planes and two halves of the circle set}
A \textit{flat Minkowski plane} is a geometry $\mathcal{M}=(\mathcal{P}, \mathcal{C}, \mathcal{G}^+, \mathcal{G}^-)$, whose 
\begin{enumerate}[label=$ $]
	\item point set $\mathcal{P}$ is  the torus $\mathbb{S}^1 \times \mathbb{S}^1$, 
	\item circles (elements of $\mathcal{C}$) are graphs of homeomorphisms of $\mathbb{S}^1 $,  
	\item $(+)$-parallel classes (elements of $\mathcal{G}^+$) are the verticals $\{ x_0 \}  \times \mathbb{S}^1$,  
	\item $(-)$-parallel classes (elements of $\mathcal{G}^-$) are the horizontals $\mathbb{S}^1 \times \{ y_0 \}$,  
\end{enumerate} 
where $x_0, y_0 \in \mathbb{S}^1$.  We denote the $(\pm)$-parallel class containing a point $p$ by $[p]_\pm$. When two points $p$ and $q$ are on the same $(\pm)$-parallel class, we say they are \textit{$(\pm)$-parallel}  and denote  this by $p \parallel_{\pm} q$. Two points $p,q$ are $\textit{parallel}$ if they are  $(+)$-parallel or $(-)$-parallel, and we denote this by $p \parallel q$. 
Furthermore, a flat Minkowski plane satisfies the following two axioms:
\begin{enumerate}[label=]
	\item \textit{Axiom of Joining}: three pairwise nonparallel points  $p,q,r$ can be joined by a unique circle.  
	\item \textit{Axiom of Touching}: for each circle $C$ and any two nonparallel points $p,q$ with $p \in C$ and $q \not \in C$, there is exactly one circle $D$ that contains both points $p,q$ and intersects $C$ only at the point $p$.
\end{enumerate}


The \textit{derived plane $\mathcal{M}_p$ of $\mathcal{M}$ at the point $p$} is the incidence geometry whose point set is $\mathcal{P} \backslash ( [p]_+ \cup [p]_-)$, whose lines are all parallel classes not going through $p$ and all circles of $\mathcal{M}$ going through $p$.   
For every point $p \in \mathcal{P}$, the derived plane $\mathcal{M}_p$ is  a flat  affine plane, cp. \cite[Theorem 4.2.1]{gunter2001}. 

A flat Minkowski plane is in \textit{standard representation} if the set $\{(x,x)\mid x \in \mathbb{S}^1\}$ is one of its circles  (cp. \cite[ Subsection 4.2.3]{gunter2001}). Up to isomorphisms, every flat Minkowski plane can be described in standard representation. In this case, we omit the two parallelisms and refer to $(\mathcal{P}, \mathcal{C})$ as a flat Minkowski plane.

Let $\mathcal{M}=(\mathcal{P},\mathcal{C})$ be a flat Minkowski plane in standard representation. Let $\mathcal{C}^+$ and $\mathcal{C}^-$ be the sets of all circles in $\mathcal{C}$ that are graphs of orientation-preserving and orientation-reversing homeomorphisms of $\mathbb{S}_1$, respectively. Clearly, $\mathcal{C}=\mathcal{C}^+ \cup \mathcal{C}^-$. We call $\mathcal{C}^+$ and $\mathcal{C}^-$ the \textit{positive half} and \textit{negative half} of $\mathcal{M}$, respectively. It turns out that these two halves are completely independent of each other, that is, we can exchange halves from different flat Minkowski planes and obtain another flat Minkowski plane, see \cite[Subsection 4.3.1]{gunter2001}:

\begin{theorem} \label{twohalves} For $i=1,2$, let $\mathcal{M}_i=(\mathcal{P},\mathcal{C}_i)$ be two flat Minkowski planes. Then $\mathcal{M}= (\mathcal{P}, \mathcal{C}_1^+ \cup \mathcal{C}_2^-)$ is a flat Minkowski plane. 
\end{theorem}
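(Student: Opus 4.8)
The plan is to check directly that $\mathcal{M}=(\mathcal{P},\mathcal{C}_1^+\cup\mathcal{C}_2^-)$ fits the definition of a flat Minkowski plane given above. Its point set is the torus $\mathbb{S}^1\times\mathbb{S}^1$, its parallel classes are the verticals and horizontals, and every member of $\mathcal{C}_1^+\cup\mathcal{C}_2^-$ is by construction the graph of a homeomorphism of $\mathbb{S}^1$ (orientation-preserving in the first summand, orientation-reversing in the second, so the two halves are disjoint as sets of graphs); and since $\mathcal{M}_1$ is in standard representation, the diagonal $\{(x,x)\mid x\in\mathbb{S}^1\}$, being the graph of the identity, lies in $\mathcal{C}_1^+\subseteq\mathcal{C}_1^+\cup\mathcal{C}_2^-$, so $\mathcal{M}$ is in standard representation as well. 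Everything therefore reduces to the Axioms of Joining and Touching for $\mathcal{M}$, and the guiding idea is that orientation-preserving and orientation-reversing circles never compete: a given configuration of points is met by circles from at most one of the two halves.

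For Joining, I would first assign to each ordered triple $(a,b,c)$ of pairwise distinct points of $\mathbb{S}^1$ a sign $\epsilon(a,b,c)\in\{+1,-1\}$ recording its cyclic order; an orientation-preserving homeomorphism of $\mathbb{S}^1$ preserves $\epsilon$ and an orientation-reversing one negates it. Hence if three pairwise nonparallel points $p_i=(x_i,y_i)$, $i=1,2,3$, lie on a circle that is the graph of a homeomorphism $h$, then the number $\epsilon(x_1,x_2,x_3)\,\epsilon(y_1,y_2,y_3)$ equals $+1$ when the circle belongs to the positive half and $-1$ when it belongs to the negative half; in particular it depends on $p_1,p_2,p_3$ only. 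Given pairwise nonparallel $p_1,p_2,p_3$ for which this sign is $+1$, the Axiom of Joining in $\mathcal{M}_1$ yields a unique $\Gamma\in\mathcal{C}_1$ through the three points, the above sign identity forces $\Gamma\in\mathcal{C}_1^+$, and it simultaneously rules out any circle of $\mathcal{C}_2^-$ through all three; if the sign is $-1$, the symmetric argument with $\mathcal{M}_2$ and $\mathcal{C}_2^-$ applies. Either way $\mathcal{M}$ has exactly one joining circle.

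For Touching, the key step is that a circle touching $C$ at a point must have the same orientation type as $C$. Indeed, let $C$ and $\Gamma$ be the graphs of homeomorphisms $f$ and $g$ with $p=(x_0,f(x_0))\in C\cap\Gamma$ and $C\cap\Gamma=\{p\}$ (so $g(x_0)=f(x_0)$); puncturing $\mathbb{S}^1$ at $x_0$ and at $f(x_0)$ and identifying each punctured circle with $\mathbb{R}$ by orientation-preserving homeomorphisms turns $f$ and $g$ into homeomorphisms $\tilde f,\tilde g$ of $\mathbb{R}$ that are both increasing or both decreasing according to orientation type, while $C\cap\Gamma=\{p\}$ says $\tilde g-\tilde f$ is nowhere zero, hence of constant sign --- which is impossible if one of $\tilde f,\tilde g$ is increasing and the other decreasing, since then $\tilde g-\tilde f$ has opposite signs near $+\infty$ and near $-\infty$. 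Consequently, if $C\in\mathcal{C}_1^+$ (the case $C\in\mathcal{C}_2^-$ being symmetric) and $p\in C$, $q\notin C$ are nonparallel, then every circle of $\mathcal{M}$ touching $C$ at $p$ is orientation-preserving and hence lies in $\mathcal{C}_1^+\subseteq\mathcal{C}_1$; so the circles of $\mathcal{M}$ touching $C$ at $p$ are exactly those of $\mathcal{M}_1$ touching $C$ at $p$, and the Axiom of Touching in $\mathcal{M}_1$ produces exactly one of them through $q$.

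The verification that $\mathcal{M}$ has the correct form and the two symmetric cases are routine, so the content lies entirely in the two observations above. I expect the orientation-type statement for touching circles to be the main obstacle: the decisive move is to pass from the torus to an affine picture by deleting the two relevant points of $\mathbb{S}^1$, after which ``meeting only at $p$'' becomes ``$\tilde g-\tilde f$ has constant sign'' and the incompatibility of mixed monotonicity, visible from the behaviour at $\pm\infty$, finishes the argument. The cyclic-order sign is then the analogous device that splits Joining cleanly between the two halves.
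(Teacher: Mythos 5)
Your argument is correct: the cyclic-order sign $\epsilon(x_1,x_2,x_3)\,\epsilon(y_1,y_2,y_3)$ does decide which half can contain a joining circle of a nonparallel triple, and the punctured-circle/monotonicity argument correctly shows that two circles meeting in exactly one point must have the same orientation type, so both axioms for $\mathcal{M}$ reduce to the corresponding axioms in $\mathcal{M}_1$ or $\mathcal{M}_2$. The paper itself gives no proof of Theorem \ref{twohalves}, quoting it from \cite[Subsection 4.3.1]{gunter2001}, and your argument is essentially the standard one found there, so there is nothing further to compare.
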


\subsection{Examples}

 We identify $\mathbb{S}^1$ with $\mathbb{R} \cup \{ \infty \}$ in the usual way. There are various known examples of flat Minkowski planes, cp. \cite[Chapter 4]{gunter2001}. For our purposes, we recall three particular examples.

\begin{example} \label{ex:classical} The circle set of the \textit{classical flat Minkowski plane $\mathcal{M}_C$} consists of sets of the form 
	$
	\{(x,sx+t)\mid x \in \mathbb{R} \} \cup \{ (\infty,\infty)\},
	$
	where $s,t \in \mathbb{R}$, $s \ne 0$, and
	$
	\{ (x,y) \in \mathbb{R}^2 \mid (x-b)(y-c)=a \} \cup \{ (\infty,c),(b,\infty) \},
	$
	where $a,b,c \in \mathbb{R}$, $a \ne 0$.
 \end{example}

%
%

\begin{example}  \label{ex:hartmann}
	For $r,s>0$, let $f_{r,s}$ be the orientation-preserving  semi-multiplicative homeomorphism of $\mathbb{S}^1$ defined by
	$$f_{r,s}(x) = 
	\begin{cases}
	x^r   & \text{for } x\ge 0,\\
	-s|x|^r & \text{for } x<0, \\
	\infty  & \text{for } x=\infty. \\
	\end{cases} 
	$$
	The circle set  of a \textit{generalised Hartmann plane $\mathcal{M}_{GH}(r_1,s_1;r_2,s_2)$} consists of sets of the form 
	$$
	\{(x,sx+t)\mid x \in \mathbb{R} \} \cup \{ (\infty,\infty)\},
	$$
	where $s,t \in \mathbb{R}$, $s \ne 0$,   sets of the form 
	$$
	\left\{ \left(x,\dfrac{a}{f_{r_1,s_1}(x-b)}+c \right) \;\middle|\;  x \in \mathbb{R} \right\}  \cup \{ (b,\infty),(\infty,c)\},
	$$
	where $a,b,c \in \mathbb{R}$, $a > 0$, and  sets of the form
	$$
	\left\{ \left(x,\dfrac{a}{f_{r_2,s_2}(x-b)}+c \right) \;\middle|\;  x \in \mathbb{R}  \right\}  \cup \{ (b,\infty),(\infty,c)\},
	$$
	where $a,b,c \in \mathbb{R}$, $a < 0$. 
\end{example}

\begin{example} Let $f$ and $g$ be two orientation-preserving homeomorphisms of $\mathbb{S}^1$. Denote $\text{\normalfont PGL}(2,\mathbb{R})$ by $\Xi$ and $\text{\normalfont PSL}(2,\mathbb{R})$ by $\Lambda$. The circle set $\mathcal{C}(f,g)$ of a \textit{half-classical plane $\mathcal{M}_{HC}(f,g)$} consists of sets of the form 
	$
	\{ (x,\gamma(x)) \mid x \in \mathbb{S}^1 \},
	$
	where $\gamma \in \Lambda \cup g^{-1}(\Xi \backslash \Lambda) f$. 
\end{example}
\subsection{Automorphisms and Klein-Kroll types}
  
An \textit{isomorphism between two flat Minkowski planes} is a bijection between the point sets that maps circles to circles, and induces a bijection between the circle sets. 
An \textit{automorphism of a   flat Minkowski plane $\mathcal{M}$} is an isomorphism from $\mathcal{M}$ to itself.  Every automorphism of a flat Minkowski plane is continuous and thus a homeomorphism of the torus. With respect to composition, the set of all automorphisms of a flat Minkowski plane is a group called the automorphism group $\text{Aut}(\mathcal{M})$ of $\mathcal{M}$.  The group $\text{Aut}(\mathcal{M})$ is a Lie group of dimension at most $6$ with respect to the compact-open topology.  We say  a flat Minkowski plane has \textit{group dimension n} if its automorphism group has dimension $n$. For $n \ge 4$, we have the following classification, cp. \cite{schenkel1980}.
 
\begin{theorem} \label{introtheorem2} Let $\mathcal{M}$  be a flat Minkowski plane with group dimension $n$. If $n \ge 4$, then exactly one of the following occurs.
	
	\begin{enumerate}
		\item $n=6$ and $\mathcal{M}$ is isomorphic to the classical flat Minkowski plane.

		\item $n=4$ and $\mathcal{M}$ is isomorphic to a  proper (nonclassical) generalised Hartmann plane $\mathcal{M}_{GH}(r_1,s_1;r_2,s_2)$, $r_1,s_1,r_2,s_2 \in \mathbb{R}^+$, $(r_1,s_1,r_2,s_2) \ne (1,1,1,1)$. 
			\item $n=4$ and $\mathcal{M}$ is isomorphic to a proper half-classical plane $\mathcal{M}_{HC}(f,id)$, where $f$ is a semi-multiplicative homeomorphism of $\mathbb{S}_1$ of the form $f_{d,s}$, $(d,s) \ne (1,1)$.
	\end{enumerate}
\end{theorem}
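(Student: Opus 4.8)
The statement is a classification theorem, so the plan is to study the connected component $\Delta:=\mathrm{Aut}(\mathcal{M})^{1}$, which has the same dimension $n\geq 4$. Being connected, $\Delta$ preserves each of the two parallelisms, so every $\delta\in\Delta$ permutes the verticals among themselves and the horizontals among themselves; this yields two continuous homomorphisms $\pi_{+},\pi_{-}\colon\Delta\to\mathrm{Homeo}^{+}(\mathbb{S}^{1})$ onto Lie groups $G_{+},G_{-}$, and the joint homomorphism $(\pi_{+},\pi_{-})$ is injective because an automorphism fixing every vertical and every horizontal fixes every point. I would then invoke the classification of connected Lie transformation groups of $\mathbb{S}^{1}$: such a group has dimension at most $3$, the only $3$-dimensional one is $\mathrm{PSL}(2,\mathbb{R})$ in its standard action, and a $2$-dimensional one is a conjugate of the connected affine group $\mathrm{Aff}^{+}(\mathbb{R})$ fixing a point. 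Setting $N_{+}:=\ker\pi_{+}$ and $N_{-}:=\ker\pi_{-}$, one gets $N_{+}\cap N_{-}=1$, the groups $N_{+}$ and $N_{-}$ commute (being normal), $\dim\Delta=\dim G_{+}+\dim N_{+}=\dim G_{-}+\dim N_{-}$, each restriction $\pi_{\mp}|_{N_{\pm}}$ is injective with normal image in $G_{\mp}$, and $N_{+}$ acts only on the second coordinate while $N_{-}$ acts only on the first. These constraints, together with Goursat's lemma and the simplicity of $\mathrm{PSL}(2,\mathbb{R})$, confine the pair $(\dim G_{+},\dim G_{-})$ to a short list and, in each surviving case, determine the isomorphism type and the explicit action of $\Delta$ up to conjugacy.

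For $n=6$ the count forces $G_{+}=G_{-}=\mathrm{PSL}(2,\mathbb{R})$ and $\dim N_{\pm}=3$, so each $N_{\pm}\cong\mathrm{PSL}(2,\mathbb{R})$ acts faithfully on a single coordinate and $\Delta$ acts as $(x,y)\mapsto(gx,hy)$ with $g,h\in\mathrm{PSL}(2,\mathbb{R})$. The $\Delta$-orbit of the standard circle $\{(x,x)\mid x\in\mathbb{S}^{1}\}$ is then precisely the set of graphs of elements of $\mathrm{PSL}(2,\mathbb{R})$, and the orbit of any orientation-reversing circle is the set of graphs of the nontrivial coset of $\mathrm{PGL}(2,\mathbb{R})$; since the graph of a homeomorphism always contains pairwise-nonparallel triples, the Axiom of Joining forbids any further circle, so $\mathcal{C}$ is the classical circle set and $\mathcal{M}\cong\mathcal{M}_{C}$. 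The value $n=5$ is ruled out by the same mechanism: no $5$-dimensional subgroup of $\mathrm{PSL}(2,\mathbb{R})\times\mathrm{PSL}(2,\mathbb{R})$ surjects onto both factors, so the count leaves only $\Delta$ acting, up to conjugacy, as $\mathrm{PSL}(2,\mathbb{R})$ on one coordinate and as $\mathrm{Aff}^{+}(\mathbb{R})$ on the other; because $\mathrm{Aff}^{+}(\mathbb{R})$ is a subgroup of $\mathrm{PSL}(2,\mathbb{R})$, the orbit of the standard circle already sweeps out the whole classical positive half, and a short rigidity argument (an equivariant homeomorphism between two homogeneous spaces of $\mathrm{PSL}(2,\mathbb{R})$ is projective) shows that the only $\Delta$-invariant negative half compatible with the axioms is the classical one, forcing $n=6$---a contradiction.

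For $n=4$ the circle set need not be classical, and I would run through the admissible pairs $(\dim G_{+},\dim G_{-})$; the count forces $\dim G_{+}+\dim G_{-}\geq 4$, and the pairs $(3,3)$ and $(2,3)$ are eliminated by Goursat's lemma and the simplicity of $\mathrm{PSL}(2,\mathbb{R})$, since a $4$-dimensional subgroup of $\mathrm{PSL}(2,\mathbb{R})\times\mathrm{PSL}(2,\mathbb{R})$ or of $\mathrm{Aff}^{+}(\mathbb{R})\times\mathrm{PSL}(2,\mathbb{R})$ cannot surject onto both factors. This leaves $(1,3)$ and $(2,2)$. In the case $(1,3)$ one has $N_{+}\cong\mathrm{PSL}(2,\mathbb{R})$ acting on the second coordinate, so its orbit of the standard circle is already the full classical positive half, $\mathcal{C}^{-}$ is a single $\Delta$-orbit of orientation-reversing graphs containing no fractional-linear member, and invariance under the remaining one-parameter group forces the homeomorphism generating that orbit to be semi-multiplicative of the form $f_{d,s}$; the Axiom of Touching then identifies $\mathcal{C}^{-}$ exactly, giving $\mathcal{M}_{HC}(f_{d,s},\mathrm{id})$ with necessarily $(d,s)\neq(1,1)$. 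In the case $(2,2)$, $\Delta$ acts as $\mathrm{Aff}^{+}(\mathbb{R})$ on each coordinate and hence contains $\Phi_{\infty}$ together with a coordinate dilatation; the $\Delta$-orbits partition $\mathcal{C}$ into finitely many pieces, and the Axioms of Joining and Touching force the profiles of the non-linear circles to be semi-multiplicative homeomorphisms $f_{r_{1},s_{1}}$ and $f_{r_{2},s_{2}}$, producing the generalised Hartmann plane $\mathcal{M}_{GH}(r_{1},s_{1};r_{2},s_{2})$ with $(r_{1},s_{1},r_{2},s_{2})\neq(1,1,1,1)$. Throughout these last steps, Theorem~\ref{twohalves} is what licenses treating the positive and negative halves separately.

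I expect the real difficulty to sit entirely in the $n=4$ analysis. The dimension count determines $\Delta$ only up to conjugacy and up to its internal kernel structure, so the work is to pass from the abstract action to explicit normal forms for the circles, to check that the incidence axioms leave no freedom beyond the stated parameters $(r_{1},s_{1},r_{2},s_{2})$ or $(d,s)$, and in particular to rule out ``asymmetric'' configurations in which the two halves are governed by unrelated data. Keeping track of which conjugates of $\mathrm{PSL}(2,\mathbb{R})$, $\mathrm{Aff}^{+}(\mathbb{R})$ and their one-parameter subgroups can occur simultaneously on the two pencils, and verifying the Touching axiom for each resulting candidate circle set, is where the bulk of the case analysis lies; the derived flat affine planes $\mathcal{M}_{p}$, together with the classification of flat affine planes admitting large automorphism groups, serve as an auxiliary tool for the remaining configurations.
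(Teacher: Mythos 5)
This theorem is not proved in the paper at all: it is quoted as background, with the attribution ``cp.\ \cite{schenkel1980}'', and the paper's own contributions begin only in Section~3. So there is no in-paper proof to compare yours against. Your sketch does follow the broad strategy of the known classification: pass to the connected component $\Delta$, use that it preserves the two parallelisms to get the projections $\pi_\pm$ onto groups acting on the two pencils, exploit injectivity of $(\pi_+,\pi_-)$ and the kernel/image dimension count, and then analyse cases. That skeleton is the correct one.

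As a proof, however, the proposal has genuine gaps. First, your quoted classification of connected Lie transformation groups of $\mathbb{S}^1$ is incomplete: besides $\mathrm{PSL}(2,\mathbb{R})$ there are its finite cyclic covers acting transitively on $\mathbb{S}^1$, and there are non-transitive actions with invariant closed sets; excluding these requires input from the geometry (e.g.\ via the derived flat affine planes), which you do not supply. Second, and more seriously, the entire substance of the $n=4$ case is compressed into the assertion that the Axioms of Joining and Touching ``force the profiles of the non-linear circles to be semi-multiplicative homeomorphisms $f_{r,s}$.'' That is precisely the hard step: one has to show that a homeomorphism whose graph generates an invariant half of the circle set solves a multiplicative functional equation, and that its continuous monotone solutions are exactly the maps $x\mapsto x^r$ and $x\mapsto -s|x|^r$; nothing in your outline does this. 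The elimination of $n=5$ and the identification of the half-classical planes in the $(1,3)$ case are likewise asserted rather than argued. What you have is a reasonable roadmap to Schenkel's theorem, not a proof; in the context of this paper the statement should simply be cited, as the author does.
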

   
  A \textit{central automorphism}  of a Minkowski plane is an automorphism that fixes at least one point and induces a central collineation in the derived projective plane at each fixed point.
Similar to the Lenz-Barlotti classification of projective planes with respect to central collineations, Minkowski planes have been classified by Klein and Kroll with respect to groups of central automorphisms
that are ‘linearly transitive’, cp. \cite{klein1992}  and \cite{kleinkroll1989}.
In the case of flat Minkowski planes, possible  Klein-Kroll types  were determined  by Steinke \cite{gunter2007} as follows.  

\begin{theorem} \label{possiblekleinkroll} A flat Minkowski plane has Klein-Kroll type  
	\begin{enumerate}[label=$ $]
		\item  \text{\normalfont\begin{tabular}{  r l  }  
				I. &  A.1, A.2, A.3, B.1, B.10, B.11, D.1, \\
				II. &  A.1, A.15, \\
				III. & C.1, C.18, C.19, \\
				IV.&  A.1, \textit{or} \\
				VII.&  F.23.
		\end{tabular} }
	\end{enumerate}
	
\end{theorem}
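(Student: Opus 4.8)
The statement asserts two things at once --- that every type in the list is attained by some flat Minkowski plane, and that no type outside the list occurs --- so I would organise the proof around these two directions, using throughout the \emph{translation principle}: a central automorphism of $\mathcal{M}$ fixing a point $p$ induces a central collineation of the derived flat affine plane $\mathcal{M}_p$, and \emph{linear transitivity} of a group of central automorphisms of $\mathcal{M}$ translates into $(p,\ell)$-transitivity of the induced group in the projective completion of $\mathcal{M}_p$. Since $\mathcal{M}_p$ is a flat affine plane for every $p$ by \cite[Theorem~4.2.1]{gunter2001}, this reduces questions about the Klein-Kroll type of $\mathcal{M}$ to questions about which Lenz--Barlotti configurations can occur in $2$-dimensional compact projective planes, where a rich structure theory (Salzmann and others) is available.

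For the realizability direction I would, for each type in the list, exhibit an explicit flat Minkowski plane attaining it. The classical plane $\mathcal{M}_C$ of Example~\ref{ex:classical} attains the richest type \textup{VII.F.23}, being the flat version of the Miquelian Minkowski plane over $\mathbb{R}$, in which every feasible group of central automorphisms is linearly transitive; the generalised Hartmann planes of Example~\ref{ex:hartmann}, the half-classical planes, and a handful of further known families in the flat Minkowski literature (Artzy--Groh planes and modified classical planes) supply the remaining types. In each case one writes down the relevant one-parameter groups of Minkowski homologies and Minkowski translations, computes their action on circles from the defining equations, and verifies linear transitivity by a direct parameter count.

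For the impossibility direction I would split according to the group dimension forced by the prescribed transitivities. If a Klein-Kroll type entails enough linearly transitive groups of central automorphisms that $\dim\mathrm{Aut}(\mathcal{M})\ge 4$, then by Theorem~\ref{introtheorem2} the plane is classical, a generalised Hartmann plane, or a half-classical plane $\mathcal{M}_{HC}(f_{d,s},\mathrm{id})$; for each of these the automorphism group is explicitly known (\cite{schenkel1980}), so one computes the Klein-Kroll type directly and checks it already appears in the list. The types compatible with $\dim\mathrm{Aut}(\mathcal{M})\le 3$ require a finer analysis: here one feeds the linear-transitivity data through the derived planes and invokes the severe restrictions on which $(p,\ell)$-transitivity configurations can occur in a flat projective plane (a flat projective plane transitive for too many flags is already Desarguesian, and several mixed configurations cannot occur at all), combined with the compatibility conditions forced by the fact that one automorphism of $\mathcal{M}$ simultaneously controls the derived planes at \emph{all} of its fixed points and that the parallel classes are topological circles on the torus. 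Each non-listed low-dimensional type is ruled out this way.

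I expect the main obstacle to be precisely this last step. Unlike the high-dimensional case, one cannot simply quote the classification of flat Minkowski planes, and the elimination becomes a lengthy case analysis in which the combinatorics of the Klein-Kroll configurations, the topology of the torus, and the structure theory of flat affine and projective planes must all be used together; keeping this bookkeeping organised, rather than any single conceptual difficulty, is what makes the argument long.
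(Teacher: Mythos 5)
This statement is not proved in the paper at all: it is quoted verbatim from Steinke \cite{gunter2007}, so there is no in-paper argument to compare your proposal against. Judged on its own terms, your proposal has two genuine problems. First, you misread what the theorem asserts. It is purely a restriction result --- every flat Minkowski plane has one of the 14 listed types --- and does \emph{not} claim that every listed type is realized. Your entire ``realizability direction'' is therefore not part of the statement, and in fact cannot be carried out as you describe: the paper explicitly notes that for type II.A.15 no example is known. Asserting that one can ``exhibit an explicit flat Minkowski plane attaining it'' for each type would, if true, settle an open existence question.

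Second, for the direction that \emph{is} the content of the theorem, your proposal never produces an argument. The reduction to derived flat affine planes and the dimension split via Theorem \ref{introtheorem2} are the right opening moves (and are consistent with how Steinke proceeds), but the high-dimensional cases still require actually computing the Klein-Kroll types of the classical, generalised Hartmann, and half-classical planes, and the low-dimensional elimination --- which is where all the work lies --- is compressed into the single sentence ``each non-listed low-dimensional type is ruled out this way.'' There are dozens of a priori conceivable Klein-Kroll configurations, and ruling out each one requires concrete arguments about which combinations of linearly transitive groups of Minkowski translations and homologies are compatible on the torus; none of these arguments appear. As written, the proposal is a plausible plan of attack, not a proof, and the gap is precisely the theorem itself.
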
 
For each of these 14 types, except type II.A.15, examples are given in \cite{gunter2007}.  
In the same paper, Steinke also characterised  some families of flat Minkowski planes.
The following  result is adapted from \cite[Proposition 5.9]{gunter2007}. 
\begin{theorem} \label{specifickleinkroll}	A flat Minkowski plane of Klein-Kroll type
	\begin{enumerate}[label=$ $]
		
		\item \text{\normalfont VII.F.23} is isomorphic to the classical flat Minkowski plane;
		
		\item \text{\normalfont III.C.19} is isomorphic to a proper generalised Hartmann plane $\mathcal{M}_{GH}(r, 1; r, 1), r \ne 1$;
		
		\item \text{\normalfont III.C.18} is isomorphic to an   Artzy-Groh plane  (cp. \cite{artzy1986}) with group dimension $3$.
	\end{enumerate}
\end{theorem}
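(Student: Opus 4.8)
The three assertions follow a single pattern. Each Klein-Kroll type names a collection of fixed flags together with the requirement that the corresponding groups of central automorphisms be linearly transitive; these generate a subgroup $\Gamma\le\text{Aut}(\mathcal{M})$, and the configuration of flags yields a lower bound for $\dim\Gamma$. When that bound is at least $4$ we invoke Theorem~\ref{introtheorem2} to reduce to the three families listed there and then single out the one carrying the prescribed type; when the bound is exactly $3$ we must construct explicit coordinates. Throughout I would use the conventions for central automorphisms and linear transitivity of \cite{klein1992} and \cite{gunter2007}, together with the list of admissible types in Theorem~\ref{possiblekleinkroll}, so that an assertion like ``not of type III'' need only be checked on a short explicit list of planes.

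For type VII.F.23 the flag configuration forces $\dim\Gamma\ge 4$, hence $\dim\text{Aut}(\mathcal{M})\ge 4$ and Theorem~\ref{introtheorem2} applies; of the planes occurring there only the classical one is of type VII, so $\mathcal{M}$ is classical. For type III.C.19 the configuration again forces $\dim\Gamma\ge 4$, so $\mathcal{M}$ is classical, a proper generalised Hartmann plane $\mathcal{M}_{GH}(r_1,s_1;r_2,s_2)$, or a proper half-classical plane $\mathcal{M}_{HC}(f,\text{id})$. The classical plane has type VII, and a direct computation of the central automorphisms of the half-classical planes of group dimension $4$ shows that none of them is of type III; hence $\mathcal{M}$ is a proper generalised Hartmann plane. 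Comparing the fixed flags demanded by III.C.19 with the central automorphisms of $\mathcal{M}_{GH}(r_1,s_1;r_2,s_2)$ — the homologies with a vertical or a horizontal axis through $(\infty,\infty)$ exist precisely when the relevant semi-multiplicative factor is trivial, and by Theorem~\ref{twohalves} the transitivity forced by the type makes the two halves behave alike — yields $s_1=s_2=1$ and $r_1=r_2=:r$, with $r\ne 1$ because $r=1$ returns the classical plane.

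Type III.C.18 is the substantive case: here $\dim\Gamma=3$, so Theorem~\ref{introtheorem2} is not directly available. I would first show $\dim\text{Aut}(\mathcal{M})=3$: were it at least $4$, the plane $\mathcal{M}$ would be one of those in Theorem~\ref{introtheorem2}, but a direct inspection of the central automorphisms of the classical, the proper generalised Hartmann, and the proper half-classical families shows that none of them is of type III.C.18. Consequently $\Gamma$ is an open subgroup of the $3$-dimensional group $\text{Aut}(\mathcal{M})$, and I would use it to normalize: send the common fixed point of $\Gamma$ to $(\infty,\infty)$; use the one-parameter group of parallel translations and the one-parameter group of homologies contained in $\Gamma$ to reduce the description of the circle set $\mathcal{C}$ to a single orbit through one normalized circle; and then read off from the Axioms of Joining and Touching that the only admissible circle sets are those of an Artzy-Groh plane \cite{artzy1986}, these realizing type III.C.18 being part of the catalogue of examples in \cite{gunter2007}. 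This coordinatization — turning the abstract linear-transitivity data of III.C.18 into an explicit normal form for the circles and matching it with the Artzy-Groh construction — is the main obstacle, and I would carry it out by following Steinke's argument for \cite[Proposition 5.9]{gunter2007}, adapting the bookkeeping to the normalization used here.
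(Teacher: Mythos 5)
There is nothing in the paper to compare your attempt against: Theorem~\ref{specifickleinkroll} is not proved in this paper at all. It is imported verbatim (``adapted from \cite[Proposition 5.9]{gunter2007}'') and used as a black box in the proof of Theorem~\ref{SHkleinkroll}. So the question is whether your outline would stand on its own as a proof, and it would not. The decisive steps are asserted rather than carried out: you never establish that the flag configurations of types VII.F.23 and III.C.19 actually force $\dim\Gamma\ge 4$ (this requires showing that each linearly transitive group of central automorphisms contributes a closed positive-dimensional subgroup and that the contributions add up, which depends on the precise flag data of those types and is the content of Steinke's argument, not a formality); the ``direct inspection'' ruling out type III for the half-classical planes and ruling out III.C.18 for all three families of Theorem~\ref{introtheorem2} is not performed; and the entire III.C.18 case --- which you correctly identify as the substantive one --- is deferred to ``following Steinke's argument for \cite[Proposition 5.9]{gunter2007}'', i.e.\ to the very result being proved.

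One step is also wrong in substance, not just incomplete: in the III.C.19 case you invoke Theorem~\ref{twohalves} to argue that ``the transitivity forced by the type makes the two halves behave alike''. Theorem~\ref{twohalves} says the opposite --- the two halves are completely independent and can be exchanged freely between planes --- so it cannot be the mechanism that forces $r_1=r_2$ and $s_1=s_2=1$. That conclusion has to come from an explicit computation of which central automorphisms of $\mathcal{M}_{GH}(r_1,s_1;r_2,s_2)$ exist and which of the groups demanded by type III.C.19 are linearly transitive; an automorphism fixing a point permutes or preserves the two halves, and it is that action, not Theorem~\ref{twohalves}, which couples the parameters. As it stands your text is a plausible reading of how the result is proved in the literature, but as a proof it has genuine gaps in every case and a misattributed key step in the second.
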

 
\section{Strongly hyperbolic functions} 
In this section, we will define strongly hyperbolic functions and derive some properties. 
For convenience, we will abbreviate the mean value theorem by MVT, and the intermediate value theorem by IVT.

\begin{definition} \label{stronglyhyperbolic}
	A function $f: \mathbb{R}^+ \rightarrow \mathbb{R}^+$ is called a \textit{strongly hyperbolic function} if it satisfies the following conditions.
	\begin{enumerate}
		\item $ \lim\limits_{x \rightarrow  0+} f(x)= +\infty$ and $\lim\limits_{x \rightarrow  +\infty} f(x)= 0$. 
		\item $f$ is strictly convex. 
		
		\item For each $b \in \mathbb{R}$, $$
		\lim\limits_{x \rightarrow  +\infty}  \frac{f(x+b)}{f(x)}=1.  $$
		\item $f$ is  differentiable.
		\item $\ln|f'(x)|$ is strictly convex.
	\end{enumerate}
\end{definition}

\subsection{Properties strongly hyperbolic  functions}
 
Let $f$ be a strongly hyperbolic function. It is readily checked that $f$ is strictly decreasing and consequently an orientation-reversing homeomorphism of $\mathbb{R}^+$. 
Since $f$ is strictly convex and differentiable, it is continuously differentiable. We now  consider some other properties of $f$.

%
%
%
%

\begin{lemma} \label{stronglyhyperboliclimits}  \label{derivativelnf} Let $b>0$, $s,t \ne 0$. Then the following statements are true.
	\begin{enumerate}
		\item
		$$\lim\limits_{x \rightarrow \infty} \dfrac{f'(x)}{f'(x+b)}=1.$$
		\item
		$$
		\lim\limits_{x \rightarrow  +\infty} \frac{f'(x)}{f(x)} = 0.
		$$
		\item
		$$
		\lim\limits_{x \rightarrow  +\infty} \frac{f(x+s)-f(x)}{f'(x)} =s. 
		$$
		\item
		$$
		\lim\limits_{x \rightarrow  +\infty} \frac{f(x+s)-f(x)}{f(x+t)-f(x)} = \frac{s}{t}.
		$$
		\item 
			$$
		\liminf_{x \rightarrow 0+}  \dfrac{f'(x)}{f(x)} = -\infty.
		$$
		
	\end{enumerate}
	\begin{proof}
		\begin{enumerate}[leftmargin=0pt,itemindent=*]
			\item 	Let  $h(x)=\ln(|f'(x)|)$. 	Since $f$ is strongly hyperbolic, $h$ is strictly convex and strictly decreasing. We define $h_b: (0,\infty) \rightarrow \mathbb{R}$ by 
			$$
			h_b(x)= h(x)-h(x+b)= \ln \left( \left|\dfrac{f'(x)}{f'(x+b)} \right| \right).
			$$
			Then $h_b$ is strictly decreasing and  is bounded below by $0$.
			This implies $ \lim\limits_{x \rightarrow \infty} h_b(x) $ exists and so does $\lim\limits_{x \rightarrow \infty} \dfrac{f'(x)}{f'(x+b)}$. The proof now follows from L'Hospital's Rule. 
			
			\item Let $\delta>0$. For $x>\delta$, by the MVT, there exists $r \in (x-\delta,x)$ such that
			$$
			f'(r)= \dfrac{f(x)-f(x-\delta)}{\delta}.
			$$
			Since $f$ is strictly convex, $f'$ is strictly increasing, so that $f'(r)< f'(x).$ Dividing both sides by $f(x)$, we get
			$$
			\dfrac{f(x)-f(x-\delta)}{\delta f(x)}<\dfrac{f'(x)}{f(x)}<0.
			$$
			
			Since $f$ is strongly hyperbolic, 
			$$
			\lim\limits_{x \rightarrow +\infty} \dfrac{f(x)-f(x-\delta)}{f(x)}= 1- \lim\limits_{x \rightarrow +\infty} \dfrac{f(x-\delta)}{f(x)}=1-1=0.
			$$
			
			The claim now follows from the squeeze  theorem.
			\item We will assume $s>0$, as the case $s<0$ is similar.
			For $x>0$,  by the MVT, there exists $r \in (x,x+s)$ such that
			$$
			f'(r) = \frac{f(x+s)-f(x)}{s}. 
			$$
			Since $f'$ is strictly increasing and negative, $
			f'(x) < f'(r) < f'(x+s)<0.
			$
			Hence,
			$$
			sf'(x) < f(x+s)-f(x)  <sf'(x+s)<0,
			$$ 
			and so
			$$
			\frac{sf'(x+s)}{f'(x)}  <  \frac{f(x+s)-f(x)}{f'(x)}  < s.
			$$
			The claim now follows from  the squeeze theorem and part 1.
			
			\item We rewrite 
			$$
			\frac{f(x+s)-f(x)}{f(x+t)-f(x)} = \frac{f(x+s)-f(x)}{f'(x)} \cdot \frac{f'(x)}{f(x+t)-f(x)}.
			$$
			The claim now follows from part 3. 
			
			\item This can be derived from the fact that $\lim\limits_{x \rightarrow 0+} \ln f(x) = +\infty$. 
			\qedhere

		\end{enumerate}
	\end{proof}
\end{lemma}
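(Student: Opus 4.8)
The plan is to take the five parts essentially in order, since parts 2--4 use part 1 and part 4 uses part 3. Before starting I would record the elementary consequences of Definition~\ref{stronglyhyperbolic} already noted in the text: $f$ is strictly decreasing, hence $f'<0$ everywhere, and strict convexity makes $f'$ strictly increasing, so $|f'|=-f'$ is strictly decreasing and $h:=\ln|f'|$ is both strictly decreasing and strictly convex; also $f$ is $C^1$.

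For part 1, set $h_b(x)=h(x)-h(x+b)=\ln\bigl|f'(x)/f'(x+b)\bigr|$ for $b>0$. The slope inequality for the convex function $h$ shows $x\mapsto h(x+b)-h(x)$ is nondecreasing, so $h_b$ is nonincreasing, and $h_b(x)>0$ because $h$ is strictly decreasing; a nonincreasing function bounded below by $0$ has a finite nonnegative limit $L$, so $f'(x)/f'(x+b)\to e^{L}$ and hence $f'(x+b)/f'(x)\to e^{-L}$. To pin down $L$ I would apply L'Hospital's rule to $f(x+b)/f(x)$, where both factors tend to $0$: since the ratio of derivatives $f'(x+b)/f'(x)$ converges, L'Hospital yields $\lim_{x\to\infty} f(x+b)/f(x)=e^{-L}$, and comparing with condition~(3) of the definition forces $e^{-L}=1$, i.e. $L=0$. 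This is the step I expect to be the main obstacle: existence of the limit is cheap from convexity, but identifying its value requires using the correct (contrapositive-style) direction of L'Hospital together with condition~(3).

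Parts 2 and 3 are then MVT-plus-squeeze arguments. For part 2, fix $\delta>0$ and apply the MVT on $[x-\delta,x]$ to get $r\in(x-\delta,x)$ with $f'(r)=(f(x)-f(x-\delta))/\delta$; since $f'$ is strictly increasing, $f'(r)<f'(x)<0$, and dividing by $f(x)>0$ traps $f'(x)/f(x)$ between $0$ and $\bigl(1-f(x-\delta)/f(x)\bigr)/\delta$, which tends to $0$ by condition~(3). For part 3, taking $s>0$ (the case $s<0$ is symmetric), the MVT on $[x,x+s]$ gives $f'(x)<f'(r)<f'(x+s)<0$ with $f'(r)=(f(x+s)-f(x))/s$, so $sf'(x)<f(x+s)-f(x)<sf'(x+s)<0$; dividing by $f'(x)<0$ (which reverses the inequalities) traps $(f(x+s)-f(x))/f'(x)$ between $s\,f'(x+s)/f'(x)$ and $s$, and the lower bound tends to $s$ by part 1. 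Part 4 is then purely formal: write the quotient as $\dfrac{f(x+s)-f(x)}{f'(x)}\cdot\dfrac{f'(x)}{f(x+t)-f(x)}$ and apply part 3 to each factor, using $t\neq 0$.

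Finally, for part 5 I would argue by contradiction. The quotient $f'(x)/f(x)=(\ln f)'(x)$ is negative, so if its $\liminf$ as $x\to 0+$ were not $-\infty$ it would be a finite number; then there would exist $\delta>0$ and a finite constant $c<0$ with $(\ln f)'(x)\ge c$ on $(0,\delta)$, and integrating from $x$ to $\delta$ gives $\ln f(x)\le \ln f(\delta)-c(\delta-x)$, which stays bounded as $x\to 0+$. This contradicts $\lim_{x\to 0+}\ln f(x)=+\infty$, which is just a restatement of condition~(1) of Definition~\ref{stronglyhyperbolic}.
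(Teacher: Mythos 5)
Your proposal is correct and follows essentially the same route as the paper: the convexity-plus-monotonicity argument for the existence of the limit in part 1 followed by L'Hospital and condition (3), the MVT-and-squeeze arguments for parts 2 and 3, the factorization for part 4, and for part 5 you simply flesh out the one-line derivation from $\lim_{x\to 0+}\ln f(x)=+\infty$ that the paper leaves implicit. No gaps.
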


%
%
%
%
%
%
%

%

We now  study the roots of the function 
$\widecheck{f}: (\max \{ -b,0\},\infty) \rightarrow \mathbb{R}$ defined as
$$\widecheck{f}(x)=af(x+b)+c-f(x),$$
where   $a>0,b,c \in \mathbb{R}$. 
We first consider the derivative $\widecheck{f}'$. 

\begin{lemma} \label{derivativestronglyhyperbolic}  \label{continuouslydifferentiable} 
	The derivative $\widecheck{f}'$ is continuous and has at most one root. Furthermore, if $\widecheck{f}'$ has a root $x_0$, then $\widecheck{f}'$ changes sign at $x_0$.

	\begin{proof}    Since $\ln|f'(x)|$ is strictly convex and strictly decreasing, the function $h: (\max \{ -b,0\},\infty) \rightarrow \mathbb{R}$ defined by
		$$ h(x)= \ln|f'(x+b)|+ \ln a - \ln |f'(x)|$$
		is strictly monotonic. Hence, $h$ has at most one root, and if $h$  has a root $x_0$, then $h$ changes sign at $x_0$. 
		Note that $\widecheck{f}'$ has a root $x_0$ if and only if $h$ has $x_0$ as a root, and  $\widecheck{f}'(x)\lessgtr0$ if and only if $h(x)\gtrless 0$. This completes the proof.	\qedhere
		
	\end{proof}
\end{lemma}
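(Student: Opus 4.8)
The plan is to reduce the statement to the strict monotonicity of a single auxiliary function. Differentiating, $\widecheck{f}'(x) = af'(x+b) - f'(x)$ on $(\max\{-b,0\},\infty)$. Since $f$ is strictly convex and differentiable it is continuously differentiable, as observed above, so $\widecheck{f}'$ is continuous and the first assertion is immediate. For the remaining two claims, recall that $f$ is strictly decreasing, hence $f'(x)<0$ and $f'(x+b)<0$ throughout the domain. Setting $\psi(x):=f'(x)/f'(x+b)>0$, we may factor
$$
\widecheck{f}'(x) = f'(x+b)\bigl(a-\psi(x)\bigr),
$$
and since $f'(x+b)<0$ this yields $\operatorname{sign}\widecheck{f}'(x)=\operatorname{sign}\bigl(\psi(x)-a\bigr)$ for every $x$ in the domain. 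Hence it suffices to show that $\psi-a$, equivalently $\psi$, is strictly monotonic.

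The key step is to pass to logarithms. Put $g(x):=\ln|f'(x)|$, which is strictly convex by condition~5 of Definition~\ref{stronglyhyperbolic} and strictly decreasing. Then
$$
\ln\psi(x) = \ln|f'(x)| - \ln|f'(x+b)| = g(x) - g(x+b).
$$
I would then invoke the elementary fact that, for a strictly convex function $g$ and a fixed constant $b\neq 0$, the increment $x\mapsto g(x+b)-g(x)$ is strictly monotonic — strictly increasing when $b>0$ and strictly decreasing when $b<0$ — which follows from the three-slopes inequality applied to the difference quotients $\tfrac{g(x+b)-g(x)}{b}$ at two distinct points. Consequently $\ln\psi$, and therefore $\psi$ and $\psi-a$, is strictly monotonic on $(\max\{-b,0\},\infty)$. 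A continuous strictly monotonic function has at most one zero and necessarily changes sign there; transporting this back through the identity $\operatorname{sign}\widecheck{f}'(x)=\operatorname{sign}(\psi(x)-a)$ establishes both remaining assertions.

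I do not anticipate a genuine obstacle; once the reduction above is in place the argument is routine. The points that need care are the sign bookkeeping — both $f'(x)$ and $f'(x+b)$ are negative, so one must keep track of how inequalities behave under division by $f'(x+b)$ and under $\ln|\cdot|$ — and the degenerate case $b=0$, where $\widecheck{f}'(x)=(a-1)f'(x)$ has no root when $a\neq 1$ and is identically zero when $a=1$ (the latter giving a constant $\widecheck{f}$, which does not occur in the construction); for $b\neq 0$ the monotonicity argument applies verbatim.
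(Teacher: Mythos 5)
Your proof is correct and follows essentially the same route as the paper: both reduce the claim to the strict monotonicity of the increment of the strictly convex function $\ln|f'|$ (your $\ln\psi$ is, up to sign and the constant $\ln a$, exactly the paper's auxiliary function $h$), and then transfer the sign information back to $\widecheck{f}'$. Your explicit treatment of the degenerate case $b=0$ is a small point the paper glosses over, but it does not change the argument.
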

%

We note that if $a \ne 1$ and $b=c=0$,	then  $\widecheck{f}$ has no roots.
In the following  lemma, we consider the special case when exactly one of $b,c$ is zero. If $a=1$, then  $\widecheck{f}$ has no roots. We then further assume $a \ne 1$. 

\begin{lemma} \label{T2M2}
	 If $a \ne 1$  and either  $b\ne 0,c=0$ or $b=0,c\ne 0$, then $\widecheck{f}$
	has at most one root.  
	Furthermore,   exactly one of the following statements is true. 
	
	\begin{enumerate}
		
		\item $\widecheck{f}$ has  exactly one root $x_0$ at which it changes sign. The derivative $\widecheck{f}'$ is nonzero at $x_0$, and either
		$$a>1,b>0,c=0 \text{ or } a>1,b=0,c<0 \text{ or } a<1,b<0,c=0 \text{ or } a<1,b=0,c>0.$$
		\item $\widecheck{f}$ has no root, and either 
		$$a<1,b>0,c=0 \text{ or } a<1,b=0,c<0 \text{ or } a>1,b<0,c=0 \text{ or } a>1,b=0,c>0.$$
	\end{enumerate} 
	
	\begin{proof} There are four cases depending on the sign of $b,c$. We only prove the cases $b>0,c=0$ and $b=0,c>0$. The cases $b<0,c=0$ and $b=0,c<0$ are similar. 
		\begin{enumerate}[label=Case  \arabic*:,leftmargin=0pt,itemindent=*]
			\item $b >0, c=0$. We have
			$\widecheck{f}(x)=af(x+b)-f(x).$ If $a<1$, then
			$$ \widecheck{f}(x)<f(x+b)-f(x)<0,$$
			and so $\widecheck{f}$ has no roots. 
			We now claim that if $a>1$, then $\widecheck{f}$ has  exactly one root $x_0$ at which it changes sign. 
			Let $g: (0,+\infty) \rightarrow \mathbb{R}$ be defined by 
			$$g(x)= \ln f(x+b) + \ln a - \ln f(x).$$

			We note that $\widecheck{f}$ has a root if and only if $g$ has a root, and the sign of $\widecheck{f}(x)$ is the same as the sign of $g(x)$. 
			Also, $g$ is continuous, $\lim\limits_{x \rightarrow 0} g(x) = -\infty$, and $\lim\limits_{x \rightarrow +\infty} g(x) =\ln a >0$.  
			It follows that $g$, and in particular $\widecheck{f}$, has at least one root. 
			
			By Lemma \ref{derivativestronglyhyperbolic}, $\widecheck{f}$ cannot have more than two roots. Suppose for a contradiction that $\widecheck{f}$ has exactly two roots $x_0<x_1$. By Lemma \ref{derivativestronglyhyperbolic} and Rolle's Theorem, $\widecheck{f}'$ is nonzero at these roots. Since $\widecheck{f}'$ is continuous, $\widecheck{f}$ is locally monotone at the roots. Hence $\widecheck{f}$ changes sign at the two roots.  It follows that $g$ has two roots at which it changes sign. 
			
			Since $\lim\limits_{x \rightarrow 0} g(x) = -\infty$, $g(x)<0$ for $x \in (0,x_0)$. Since $g$ changes sign at $x_0$ and has no roots between $x_0$ and $x_1$, we have $g(x)>0$ for $x \in (x_0,x_1)$. Since $g$ changes sign at $x_1$ and has no roots larger than $x_1$, we have $g(x)<0$ for $x>x_1$. This contradicts $\lim\limits_{a \rightarrow +\infty} g(x) =\ln a >0$.  
			
			Therefore $\widecheck{f}$ has exactly one root $x_0$. Then $g$ also has exactly one root at $x_0$. By the IVT, $g$ changes sign at $x_0$. Then $\widecheck{f}$ also changes sign at $x_0$.  
			This proves the claim.
			
			\item $b =0, c >0$. Then 
			$
			\widecheck{f}(x) = (a-1)f(x)+c.$
			If $a>1$, then $\widecheck{f}>0$ and has no roots. If $a<1$, then from Definition \ref{stronglyhyperbolic},  $\widecheck{f}$ has exactly one root at which it changes sign. \qedhere

		\end{enumerate}
	\end{proof}
	
\end{lemma}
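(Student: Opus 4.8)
The plan is to split into the two regimes $b=0,c\neq 0$ and $b\neq 0,c=0$, and in the latter to split further according to whether the signs of $a-1$ and $b$ are compatible. Throughout I use that $f$ is a strictly decreasing homeomorphism of $\mathbb{R}^+$ with $f'$ nowhere zero (the latter because $\ln|f'|$ is required to exist in Definition \ref{stronglyhyperbolic}(5)).

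Consider first $b=0,c\neq 0$, where $\widecheck{f}(x)=(a-1)f(x)+c$ on $(0,\infty)$. Since $f$ is a strictly monotone bijection of $(0,\infty)$ onto $(0,\infty)$, so is $x\mapsto (a-1)f(x)$, and the equation $\widecheck{f}(x)=0$ is equivalent to $f(x)=c/(1-a)$. This has a unique solution $x_0$ precisely when $c/(1-a)>0$, i.e.\ when $a>1,c<0$ or $a<1,c>0$; in that case $\widecheck{f}$ changes sign at $x_0$ by strict monotonicity and $\widecheck{f}'(x_0)=(a-1)f'(x_0)\neq 0$. When $a>1,c>0$ or $a<1,c<0$ the number $c/(1-a)$ is negative, so $\widecheck{f}$ has no root. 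This already gives the asserted dichotomy in this case.

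Now take $b\neq 0,c=0$, so $\widecheck{f}(x)=af(x+b)-f(x)$ on $(\max\{-b,0\},\infty)$. If $b>0,a<1$ then $\widecheck{f}(x)<f(x+b)-f(x)<0$, and if $b<0,a>1$ then $\widecheck{f}(x)>f(x+b)-f(x)>0$, because $f$ is strictly decreasing; so $\widecheck{f}$ has no root in these two sub-cases. In the remaining sub-cases $b>0,a>1$ and $b<0,a<1$, I would work with $g(x)=\ln a+\ln f(x+b)-\ln f(x)$, which is continuous, has exactly the same zeros as $\widecheck{f}$, and satisfies $\operatorname{sign} g=\operatorname{sign}\widecheck{f}$ since all quantities inside the logarithms are positive. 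Using Definition \ref{stronglyhyperbolic}(1) at the finite endpoint of the domain and Definition \ref{stronglyhyperbolic}(3) at $+\infty$, one finds that $g\to -\infty$ at the endpoint and $g\to \ln a>0$ at $+\infty$ when $b>0$, while $g\to +\infty$ at the endpoint and $g\to \ln a<0$ at $+\infty$ when $b<0$; either way the IVT gives $g$, hence $\widecheck{f}$, at least one root.

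It remains to rule out a second root and to pin down the behaviour at the root. If $\widecheck{f}$ had roots $x_0<x_1$, then by Rolle $\widecheck{f}'$ would vanish somewhere in $(x_0,x_1)$, and by Lemma \ref{derivativestronglyhyperbolic} that is its only zero, at which it changes sign; hence $\widecheck{f}'(x_0),\widecheck{f}'(x_1)\neq 0$, so $\widecheck{f}$ is locally strictly monotone at each $x_i$ and changes sign there, and therefore so does $g$. Tracking the sign of $g$ from the finite endpoint through $x_0$ and $x_1$ then forces $g<0$ near $+\infty$ when $b>0$ (resp.\ $g>0$ near $+\infty$ when $b<0$), contradicting $\lim_{x\to\infty} g=\ln a$. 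So $\widecheck{f}$ has exactly one root $x_0$; it changes sign there by the IVT applied to $g$, and $\widecheck{f}'(x_0)\neq 0$, since otherwise Lemma \ref{derivativestronglyhyperbolic} would make $x_0$ a point of strict local extremum of $\widecheck{f}$, incompatible with the sign change. Matching the sub-cases to the two displayed lists finishes the proof. The only step that is not routine is this exclusion of a second root in the $c=0$ case: $\widecheck{f}$ is not visibly monotone, so one genuinely needs the strict convexity of $\ln|f'|$ via Lemma \ref{derivativestronglyhyperbolic}, together with the precise endpoint behaviour of $f$, to produce the contradiction.
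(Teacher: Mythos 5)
Your proof is correct and follows essentially the same route as the paper's: the comparison $\widecheck{f}\lessgtr f(x+b)-f(x)$ for the no-root sub-cases, the logarithmic auxiliary function $g$ with the IVT at the endpoints for existence, and Rolle plus Lemma \ref{derivativestronglyhyperbolic} with the sign-tracking contradiction to exclude a second root. The only differences are cosmetic improvements: you treat all four sign combinations explicitly rather than by symmetry, and you explicitly justify $\widecheck{f}'(x_0)\neq 0$ in the single-root case, which the paper's proof leaves implicit.
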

We now consider the case when both $b,c \ne 0$. 
\begin{lemma} \label{T1meat}
If $b,c \ne 0$, then $\widecheck{f}(x)$ has at most two roots. Furthermore,   exactly one of the following statements is true. 
	
	\begin{enumerate}
		
		\item  $\widecheck{f}$ has exactly  two roots $x_0$ and $x_1$ at which it changes sign. The derivative $\widecheck{f}'$ is nonzero at $x_0$ and $x_1$, and either  
		$$a<1,b<0,c>0 \text{ or } a>1,b>0,c<0.$$ 
		\item $\widecheck{f}$ has  exactly  one root $x_0$ at which it does not change sign. The derivative $\widecheck{f}'$ also has a root at $x_0$,  and either  
		$$a<1,b<0,c>0 \text{ or } a>1,b>0,c<0.$$
		\item $\widecheck{f}$ has exactly one   root $x_0$ at which it changes sign. The derivative $\widecheck{f}'$ is nonzero at $x_0$, and $bc >  0$.

		\item $\widecheck{f}$ has  no roots, and $bc<0$.
		
	\end{enumerate}	
	
	\begin{proof}    \begin{enumerate}[leftmargin=0pt,itemindent=*]
			
			\item From Lemma \ref{derivativestronglyhyperbolic} and Rolle's Theorem, $\widecheck{f}$ cannot have more than two roots. Also, if 
			$\widecheck{f}$ has exactly two roots, then  $\widecheck{f}'$ is nonzero at these roots. Moreover, since $\widecheck{f}$ is continuously differentiable, it must change sign at the two roots. 
			
			Assume $\widecheck{f}$ has exactly one root $x_0$. If $\widecheck{f}'(x_0) \ne 0$, then   $\widecheck{f}$ changes sign at $x_0$. In the case $\widecheck{f}(x_0)=\widecheck{f}'(x_0)=0$, by Lemma \ref{derivativestronglyhyperbolic}, the derivative $\widecheck{f}'$ changes sign at $x_0$. This implies $\widecheck{f}$ has a local extremum at $x_0$ and so does not change sign.
			
			If none of the above occurs, then it must be the case that $\widecheck{f}$ has no roots. Therefore, excluding the conditions on the parameters $a,b,c$, exactly one of the statements in the lemma is true. 
			
			\item We claim that if $bc>0$, then $\widecheck{f}$ has exactly one root at which $\widecheck{f}$ changes sign. 
			We assume that $b>0,c>0$; the case $b<0,c<0$ is similar. Since 
			$\lim\limits_{x \rightarrow 0} \widecheck{f}(x)= -\infty,$ and $\lim\limits_{x \rightarrow +\infty} \widecheck{f}(x)=c$, by the IVT, $\widecheck{f}$ has at least one root. 
			
			 Suppose for a contradiction that $\widecheck{f}$ has exactly two roots $x_0<x_1$.  Since $\lim\limits_{x \rightarrow 0} \widecheck{f}(x) = -\infty$, $\widecheck{f}(x)<0$ for $x \in (0,x_0)$. Since $\widecheck{f}$ changes sign at $x_0$ and has no roots between $x_0$ and $x_1$, we have $\widecheck{f}(x)>0$ for $x \in (x_0,x_1)$. Since $\widecheck{f}$ changes sign at $x_1$ and has no roots larger than $x_1$, we have $\widecheck{f}(x)<0$ for $x>x_1$. This contradicts $\lim\limits_{x \rightarrow +\infty} \widecheck{f}(x) =c >0$.  This proves the claim.

			\item Assume $\widecheck{f}$ has exactly  two roots $x_0$ and $x_1$ at which it changes sign. From part 2 of the proof and Lemma \ref{T2M2}, it must be the case that $bc<0$. Assume $b>0,c<0$. We rewrite 
			$$\widecheck{f}(x)=a\left(f(x+b)-f(x)\right) +c+(a-1)f(x).$$
			Since $f$ is strictly convex, $f(x+b)-f(x)$ is strictly increasing. If $0<a\le 1$, then $\widecheck{f}$ is   strictly increasing and thus has at most one root, which contradicts our assumption. Therefore $a>1$. 
			Similarly, when  $b<0,c>0$, we have $a<1$.
			
			\item   Assume $\widecheck{f}$ has exactly one   root at which it changes sign. We can say there exists $x^*$ such that $\widecheck{f}(x^*)>0$.  Suppose for a contradiction that $b>0,c<0$. Then 
			$\lim\limits_{x \rightarrow 0} \widecheck{f}(x)= -\infty,$ and $\lim\limits_{x \rightarrow +\infty} \widecheck{f}(x)=c<0$. By the IVT, there exist two roots $x_0 \in (0,x*)$ and $x_1 \in (x^*,\+\infty)$, which contradicts our assumption. Similarly, it cannot be the case $b<0,c>0$. Thus, $bc>0$. 
			
			\item When $\widecheck{f}$ has  exactly  one root $x_0$ at which it does not change sign, the conditions on $a,b,c$  follow in a similar manner as in part 3).  When $\widecheck{f}$ has no roots, then by part 2 of the proof, $bc<0$. This completes the proof.  \qedhere

			%
			%
			%
			%
			%
			%
			%
			
		\end{enumerate}
	\end{proof}
\end{lemma}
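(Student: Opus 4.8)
The plan is to prove the bound on the number of roots first, then run a purely topological ``shape analysis'' that partitions the four alternatives according to the sign-change behaviour of $\widecheck{f}$, and finally pin down the parameter constraints on $a,b,c$ by a monotonicity rewriting.

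First I would establish that $\widecheck{f}$ has at most two roots: this is immediate from Lemma \ref{derivativestronglyhyperbolic} (the derivative $\widecheck{f}'$ has at most one root) together with Rolle's Theorem, since three roots of $\widecheck{f}$ would force two roots of $\widecheck{f}'$. The same pairing gives the structure at the roots. If $\widecheck{f}$ has exactly two roots $x_0<x_1$, then the unique root of $\widecheck{f}'$ lies strictly between them, so $\widecheck{f}'(x_0),\widecheck{f}'(x_1)\neq 0$, and $\widecheck{f}$ is strictly monotone near each, hence changes sign at both. If $\widecheck{f}$ has exactly one root $x_0$, then either $\widecheck{f}'(x_0)\neq 0$, and $\widecheck{f}$ changes sign at $x_0$; or $\widecheck{f}'(x_0)=0$, in which case $x_0$ is an interior local extremum and, by Lemma \ref{derivativestronglyhyperbolic}, $\widecheck{f}'$ changes sign at $x_0$, so $\widecheck{f}$ does not change sign there. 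Adjoining the ``no root'' possibility, this shows that, ignoring the conditions on $a,b,c$, exactly one of the four listed statements holds.

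Second I would settle the $bc$-dichotomy using boundary limits and the IVT. On the relevant domain ($(0,\infty)$ when $b>0$, and $(-b,\infty)$ when $b<0$), one endpoint limit of $\widecheck{f}$ is $\pm\infty$ coming from $\lim_{x\to 0+}f=+\infty$, and the other is the finite nonzero constant $c$ coming from $\lim_{x\to\infty}f=0$. If $bc>0$ these limits are $-\infty$ and $c>0$ (case $b>0,c>0$) or $+\infty$ and $c<0$ (case $b<0,c<0$); in either case $\widecheck{f}$ changes sign so it has a root by the IVT, and a two-root configuration is impossible because the sign pattern it would force ($-,+,-$ or $+,-,+$) contradicts the sign of the limit $c$ at $+\infty$. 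Hence $bc>0$ forces statement (3). Conversely, if $\widecheck{f}$ has exactly one root at which it changes sign, then $\widecheck{f}$ is positive somewhere and negative somewhere; assuming $bc<0$, the endpoint limits are $-\infty$ and $c<0$ (if $b>0$) or $+\infty$ and $c>0$ (if $b<0$), and applying the IVT on the two sides of a point where $\widecheck{f}$ has the ``wrong'' sign produces a second root --- a contradiction. So statement (3) forces $bc>0$, and therefore statements (1), (2), (4) all require $bc<0$.

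Third I would extract the constraint on $a$ in statements (1) and (2) via the rewriting $\widecheck{f}(x)=a\big(f(x+b)-f(x)\big)+(a-1)f(x)+c$. Strict convexity of $f$ makes $x\mapsto f(x+b)-f(x)$ strictly increasing when $b>0$ and strictly decreasing when $b<0$, while $f$ itself is strictly decreasing. Thus if $b>0,c<0$ and $0<a\le 1$, all three summands are increasing (the middle one strictly), so $\widecheck{f}$ is strictly increasing and can only be in statement (3) or (4); hence statements (1)--(2) with $b>0$ (so $c<0$, by the previous step) force $a>1$. Symmetrically, if $b<0,c>0$ and $a\ge 1$, then $\widecheck{f}$ is strictly decreasing, so statements (1)--(2) with $b<0$ force $a<1$. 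Combining with $bc<0$ gives exactly the two parameter alternatives of (1) and (2), while statement (4) carries only $bc<0$, finishing the proof. I expect the main obstacle to be the bookkeeping in the second step: one must treat the domains $(0,\infty)$ and $(-b,\infty)$ separately, correctly identify which endpoint limit is infinite and its sign, and make sure the ``two roots force an impossible sign pattern'' argument is applied on the correct side; the monotonicity rewriting in the third step is the cleanest part, as long as one remembers that multiplying an increasing function by the positive constant $a$ preserves monotonicity and that the degenerate value $a=1$ is absorbed into the $\le$/$\ge$ inequalities.
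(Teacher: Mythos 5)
Your proposal is correct and follows essentially the same route as the paper: Rolle's Theorem plus Lemma \ref{derivativestronglyhyperbolic} for the root count and the exclusive case structure, the endpoint limits of $\widecheck{f}$ together with the IVT for the sign of $bc$ (both directions), and the rewriting $\widecheck{f}(x)=a\bigl(f(x+b)-f(x)\bigr)+(a-1)f(x)+c$ with the convexity/monotonicity argument to force $a>1$ or $a<1$ in cases (1) and (2). The only difference is organizational --- you fold the paper's parts 2 and 4 into one dichotomy and get $bc<0$ for cases (1), (2), (4) by contraposition rather than by citing Lemma \ref{T2M2} --- which does not change the substance.
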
  
\subsection{Graphs of strongly hyperbolic functions under  $\Phi_\infty$}
Let $a_1,a_2>0$, $b_1,b_2,c_1,c_2 \in \mathbb{R}$. For two strongly hyperbolic functions $f_1$ and $f_2$, we define 
$$\widecheck{f}: (\max \{ -b_1,-b_2\},+\infty) \rightarrow \mathbb{R}: x \mapsto a_1f_1(x+b_1)+c_1-a_2f_1(x+b_2)-c_2,$$  
$$\widehat{f}: (-\infty,\min \{ -b_1,-b_2\}) \rightarrow \mathbb{R}: x \mapsto -a_1f_2(-x-b_1)+c_1+a_2f_2(-x-b_2)-c_2.$$ 
From the previous subsection, we obtain the following two lemmas. 
\begin{lemma}  \label{M2full}
	
	Assume  $b_1\ne b_2,c_1=c_2$ or $b_1=b_2,c_1 \ne c_2$. 
%
%
	
	\begin{enumerate}

		\item   If $a_1=a_2$, then both $\widecheck{f}$ and  $\widehat{f}$ have no roots.	
		\item  If $a_1\ne a_2$, then either $\widecheck{f}$ or $\widehat{f}$, but not both, has  exactly one root $x_0$ at which it changes sign.  The corresponding derivative  is nonzero at $x_0$.
		
	\end{enumerate}
%
%
%
%
%
%
%
%
\end{lemma}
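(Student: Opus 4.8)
The plan is to reduce Lemma \ref{M2full} directly to Lemma \ref{T2M2} applied twice, once for $\widecheck{f}$ and once for $\widehat{f}$. First I would observe that the hypothesis ``$b_1 \ne b_2, c_1 = c_2$ or $b_1 = b_2, c_1 \ne c_2$'' is translation-invariant in the sense that it is preserved under reparametrising by a shift; more usefully, by substituting $x \mapsto x - b_2$ in $\widecheck{f}$ and $x \mapsto -x - b_1$ (say) in $\widehat{f}$, each of these functions can be rewritten in the form $g \mapsto \alpha\, g(u + \beta) + \gamma - g(u)$ studied in the previous subsection, with $\alpha = a_1/a_2 > 0$ (resp.\ $\alpha = a_2/a_1$), $\beta = b_1 - b_2$ (resp.\ $b_2 - b_1$), and $\gamma = (c_1 - c_2)/a_2$ (resp.\ up to sign and scaling), and $g = f_1$ (resp.\ $g = f_2$), after dividing through by the appropriate positive constant, which does not affect the root set. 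The key point is that dividing by $a_2$ (or $a_1$) turns $\widecheck{f}$ into $a_2 \cdot \widecheck{g}$ where $\widecheck{g}$ has exactly the shape to which Lemma \ref{T2M2} applies, since exactly one of $\beta, \gamma$ vanishes under our hypothesis and $\alpha \ne 1 \iff a_1 \ne a_2$.

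Next I would handle part (1): if $a_1 = a_2$, then $\alpha = 1$, and by the remark preceding Lemma \ref{T2M2} (``if $a=1$, then $\widecheck{f}$ has no roots'') both $\widecheck{g}$ for $f_1$ and the analogous function for $f_2$ have no roots, hence neither does $\widecheck{f}$ nor $\widehat{f}$. For part (2), with $a_1 \ne a_2$, I would apply Lemma \ref{T2M2} to each of the two rescaled functions. The crucial bookkeeping is that $\widecheck{f}$ uses $\alpha = a_1/a_2$ while $\widehat{f}$ uses $\alpha = a_2/a_1$, and these are on opposite sides of $1$: exactly one of them is $>1$ and the other is $<1$. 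Moreover the sign pattern of $(\beta, \gamma)$ is \emph{flipped} between the two (the $\widehat{f}$ construction introduces the sign changes visible in its definition, effectively negating $b$ and $c$ and swapping the roles). Consulting the dichotomy in Lemma \ref{T2M2}: for a fixed sign choice of the pair $(\beta,\gamma)$, alternative (1) (one root, sign change, $\widecheck{f}' \ne 0$ there) occurs for one side of $\alpha = 1$ and alternative (2) (no roots) for the other. Therefore, running through the four sign cases, precisely one of $\widecheck{f}, \widehat{f}$ lands in case (1) of Lemma \ref{T2M2} and the other in case (2) — giving exactly one root, at which the function changes sign and the derivative is nonzero, on exactly one of the two pieces.

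The main obstacle I anticipate is purely organisational rather than mathematical: getting the sign conventions straight in the definition of $\widehat{f}$. Because $\widehat{f}$ is built from $f_2(-x-b_i)$ with an overall sign and domain $(-\infty, \min\{-b_1,-b_2\})$, the substitution $x \mapsto -x$ must be tracked carefully to confirm that $\widehat{f}$ really does become (a positive multiple of) a function of the form $af_2(u+b)+c-f_2(u)$ with the parameter signs that are the mirror image of those appearing for $\widecheck{f}$. Once this correspondence is pinned down, the statement that ``exactly one of $\widecheck{f}, \widehat{f}$ has a root'' is exactly the statement that ``the parameter configuration for $f_1$ falls under case (1) of Lemma \ref{T2M2} if and only if the configuration for $f_2$ falls under case (2)'', which is immediate from the $\alpha \leftrightarrow 1/\alpha$ relation. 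I would present a short table of the four sign cases to make the verification transparent, and note that the assertion about $\widecheck{f}'$ (resp.\ $\widehat{f}'$) being nonzero at the root is inherited verbatim from the corresponding clause of Lemma \ref{T2M2}.
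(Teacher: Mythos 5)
Your strategy is the right one and is exactly what the paper intends: Lemma \ref{M2full} is stated without proof because it is meant to follow from Lemma \ref{T2M2} by the rescaling/reparametrisation you describe, and your part (1) and the final parity argument are correct. However, your sign bookkeeping for $\widehat{f}$ contains a genuine slip that, taken literally, would break the argument. Substituting $v=-x-b_1$ in $\widehat{f}$ gives $\widehat{f}=a_2f_2(v+(b_1-b_2))-a_1f_2(v)+(c_1-c_2)=a_1\left[\tfrac{a_2}{a_1}f_2(v+\beta)+\tfrac{c_1-c_2}{a_1}-f_2(v)\right]$, so the shift parameter is the \emph{same} $\beta=b_1-b_2$ as for $\widecheck{f}$ and the constant has the \emph{same} sign as $\gamma$; only the multiplier is inverted, $\alpha\mapsto 1/\alpha$. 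You instead assert that the signs of $(\beta,\gamma)$ are flipped \emph{and} that $\alpha$ is inverted. Applying both flips at once maps each line of case (1) of Lemma \ref{T2M2} back into case (1) (e.g.\ $a>1,b>0,c=0$ becomes $a<1,b<0,c=0$), so under your stated conventions $\widecheck{f}$ and $\widehat{f}$ would land in the \emph{same} alternative and the ``but not both'' conclusion would fail. The correct statement is that exactly one of the two flips occurs, depending on which endpoint you normalise against: either same $(\beta,\gamma)$ with $\alpha\mapsto1/\alpha$ (substitution $v=-x-b_1$), or same $\alpha$ with $(\beta,\gamma)\mapsto(-\beta,-\gamma)$ (substitution $v=-x-b_2$, which also introduces an overall negative factor, harmless for roots and sign changes). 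Either single flip swaps cases (1) and (2) of Lemma \ref{T2M2}, which is what your concluding sentence actually uses; the table of four sign cases you propose would have exposed the inconsistency, so the fix is local.
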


\begin{lemma} \label{obviouscalculation} 
	

%
	Assume $b_1\ne b_2, c_1 \ne c_2$.	If  $\widecheck{f}$ has at least one root, then exactly one of the following is true.
	\begin{enumerate}
		\item  $\widecheck{f}$ has exactly  two roots $x_0$ and $x_1$ at which it changes sign, $\widehat{f}$ has no roots. 
		The derivative $\widecheck{f}'$ is nonzero at $x_0$ and $x_1$.

		\item  $\widecheck{f}$ has exactly one root $x_0$ at which it does not change sign,  $\widehat{f}$ has no roots. The derivative $\widecheck{f}'$ also has a root at $x_0$.
		
		\item $\widecheck{f}$ has exactly one root $x_0$ at which it changes sign,  $\widehat{f}$ also has exactly one root $x_1$ at which it changes sign. The derivatives $\widecheck{f}'(x_0)$ and $\widehat{f}'(x_1)$ are nonzeros.
	\end{enumerate}

%
\end{lemma}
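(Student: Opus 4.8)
The plan is to reduce everything to Lemma~\ref{T1meat} by two changes of variables. Set $a = a_1/a_2 > 0$, $b = b_1 - b_2$, $c = (c_1 - c_2)/a_2$ and $c' = (c_1 - c_2)/a_1$; by hypothesis $b \neq 0$, and $c,c'$ are nonzero with the same sign. Substituting $u = x + b_2$ in $\widecheck{f}$ and $v = -x - b_1$ in $\widehat{f}$ yields
\[
\widecheck{f}(x) = a_2\bigl(a\,f_1(u+b) + c - f_1(u)\bigr)
\qquad\text{and}\qquad
\widehat{f}(x) = a_1\bigl(\tfrac{1}{a}\,f_2(v+b) + c' - f_2(v)\bigr),
\]
and a short computation shows that as $x$ ranges over the domain of $\widecheck{f}$, resp.\ of $\widehat{f}$, the new variable $u$, resp.\ $v$, ranges over $(\max\{-b,0\},+\infty)$, which is exactly the domain used in Lemma~\ref{T1meat}. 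Since $a_1,a_2 > 0$, the positive prefactors affect neither roots, nor sign changes, nor the vanishing of derivatives. Hence every conclusion of Lemma~\ref{T1meat} about a map $y \mapsto \alpha\,g(y+b) + \gamma - g(y)$ applies verbatim to $\widecheck{f}$ (taking $g = f_1$, $\alpha = a$, $\gamma = c$) and to $\widehat{f}$ (taking $g = f_2$, $\alpha = 1/a$, $\gamma = c'$).

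Assume now that $\widecheck{f}$ has a root. By Lemma~\ref{T1meat} applied with parameters $(a,b,c)$, the function $\widecheck{f}$ falls into case~1, 2, or~3 of that lemma (case~4 is excluded, as it has no roots); these three possibilities coincide with the three alternatives for $\widecheck{f}$ in the present statement, and they are clearly mutually exclusive. It remains to read off the behaviour of $\widehat{f}$. If $\widecheck{f}$ is in case~1 or~2, the parameter constraints in Lemma~\ref{T1meat} force $a<1,\,b<0,\,c>0$ or $a>1,\,b>0,\,c<0$; the triple $(1/a,b,c')$ governing $\widehat{f}$ then satisfies $1/a>1,\,b<0,\,c'>0$ in the first case and $1/a<1,\,b>0,\,c'<0$ in the second, and in both cases this matches neither the sign conditions of cases~1 and~2 of Lemma~\ref{T1meat} nor the condition $bc'>0$ of case~3; therefore $\widehat{f}$ is in case~4 and has no roots, giving cases~1 and~2 of the present lemma. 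If instead $\widecheck{f}$ is in case~3, then $bc>0$, hence $bc'>0$, which forces $\widehat{f}$ into case~3 of Lemma~\ref{T1meat} as well: $\widehat{f}$ has exactly one root, at which it changes sign and has nonzero derivative. This is case~3 of the present lemma. Since the three outcomes for $\widecheck{f}$ are exhaustive and mutually exclusive, exactly one of the three statements holds.

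I expect the only delicate part to be the bookkeeping in the first paragraph: checking that the two substitutions really map the stated half-lines onto the common interval $(\max\{-b,0\},+\infty)$, and tracking the inversion $a \leftrightarrow 1/a$ together with the (common) sign of $c$ and $c'$, so that Lemma~\ref{T1meat}'s sign conditions get applied to the correct function. No separate treatment of the degenerate case $a_1 = a_2$ is needed: there $a = 1/a = 1$, both $\widecheck{f}$ and $\widehat{f}$ are strictly monotone by the convexity of $f_1$ and $f_2$, and the argument above already routes this configuration into case~3.
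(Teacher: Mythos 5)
Your proposal is correct and follows exactly the route the paper intends: the paper states this lemma without proof as a direct consequence of the previous subsection, and your normalization $u=x+b_2$, $v=-x-b_1$ reducing $\widecheck{f}$ and $\widehat{f}$ to the form of Lemma~\ref{T1meat} with parameter triples $(a,b,c)$ and $(1/a,b,c')$, followed by the sign bookkeeping, is precisely the omitted argument. The details check out, including the domain identification with $(\max\{-b,0\},+\infty)$ and the observation that the orientation-reversing substitution preserves roots, sign changes, and vanishing of derivatives.
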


\subsection{Some examples and remarks} 
\begin{example} \label{exampleSH}	The following functions $f  :\mathbb{R}^+ \rightarrow \mathbb{R}^+$ are strongly hyperbolic.
	\begin{enumerate}
		\item
		$
		f(x)=\dfrac{1}{x^i}, 
		$
		where $i \in \mathbb{N}$. 
		\item 
	$
	f(x)=\sum\limits_{i=1}^n  \dfrac{1}{x^i},
	$
		where $n \in \mathbb{N}$. 
	\item  
	$
	f(x)=\dfrac{1}{x+\arctan(x)}.
	$
	\item 
	$
f(x)= \ln\left(\dfrac{1}{x}+\sqrt{\dfrac{1}{x^2}+1}\right).
	$
\end{enumerate}
\end{example}

\begin{remark} The inverse of a  strongly hyperbolic function is not necessarily a strongly hyperbolic function. 
	For instance, the inverse of the last function $f$ in Example \ref{exampleSH}  is given by $$f^{-1}(x)=\dfrac{1}{\sinh(x)}.$$

	As mentioned in \cite{hartman1981}, the inverse $f^{-1}$ is not strongly hyperbolic, since, for $b \ne 0$,
	$$
	\lim\limits_{x \rightarrow \infty} \dfrac{\sinh(x)}{\sinh(x+b)} = e^{-b} \ne 1.
	$$ 
%
%
%
%
%
%
\end{remark}
%

%

\begin{remark} The definition of strongly hyperbolic functions share some similarities with the definition of strongly parabolic functions in the construction of flat Laguerre planes of translation type by L\"{o}wen and Pf\"{u}ller \cite{lowen1987a}. We note that strongly parabolic functions are assumed to be twice differentiable. As mentioned by the authors in \cite[Remark 2.6]{lowen1987a}, this condition was proved to be unnecessary by Schellhammer \cite{schellhammer1981}. For the definition of strongly hyperbolic functions, we omit this condition. 
	\end{remark}

\section{Flat Minkowski planes from strongly hyperbolic functions}
%
%
	Let $f_1,f_2$ be strongly hyperbolic functions.  For $a>0,b,c \in\mathbb{R}$, let  	
	\begin{align*}
	f_{a,b,c}: \mathbb{R} \backslash \{ -b \} \rightarrow  \mathbb{R} \backslash \{ c\} : x \mapsto 
	\begin{cases}
	af_1(x+b)+c  & \text{for } x>-b,\\
	-af_2(-x-b)+c & \text{for } x<-b.
	\end{cases}
	\end{align*}  
	
We also define the following sets:
	\begin{align*}
	\overline{f_{a,b,c}}		&\coloneq \{ (x,f_{a,b,c}(x))\mid x\in\mathbb{R} \backslash \{-b\}  \} \cup \{(-b,\infty),(\infty,c)\},  \\
	F		& \coloneq \{ \overline{f_{a,b,c}}\mid a>0,b,c\in \mathbb{R} \},   \\
	\overline{l_{s,t}}&\coloneq \{(x,sx+t)\mid x\in \mathbb{R} \} \cup \{(\infty,\infty)\},  \\ 
	L&\coloneq \{\overline{l_{s,t}} \mid s,t \in \mathbb{R}, s < 0\}.
	\end{align*}   

%
	For a set  $\overline{f_{a,b,c}}$, 
  \textit{the convex branch of $\overline{f_{a,b,c}}$} is   the subset
	$
	\{ (x,f_{a,b,c}(x)) \mid x>-b\}$, and \textit{the concave branch of $\overline{f_{a,b,c}}$} is the subset 
	$
	\{ (x,f_{a,b,c}(x))\mid x<-b\}.
	$  
	
%
%

We define $\mathcal{C}^-(f_1,f_2)\coloneq F \cup L$ and $ \mathcal{C}^{+}(f_1,f_2)\coloneq \varphi(\mathcal{C}^{-}(f_1,f_2))$, where  $\varphi$ is the homeomorphism of the torus defined by $\varphi: (x,y) \mapsto (-x,y).$  
 In this section, we prove the following theorem.    
\begin{theorem} \label{main}
For $i=1..4$, let $f_i$ be a strongly hyperbolic function. Let  $\mathcal{C} \coloneq  \mathcal{C}^{-}(f_1,f_2) \cup \mathcal{C}^{+}(f_3,f_4)$. Then  
$\mathcal{M}_f=\mathcal{M}(f_1,f_2;f_3,f_4):=(\mathcal{P}, \mathcal{C})$ is a flat Minkowski plane. 
\end{theorem}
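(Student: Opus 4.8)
The plan is to reduce everything, via Theorem~\ref{twohalves}, to a single special case. It is enough to prove the assertion when $f_3=f_1$ and $f_4=f_2$, i.e. that $\mathcal{M}:=(\mathcal{P},\,\mathcal{C}^{-}(f_1,f_2)\cup\mathcal{C}^{+}(f_1,f_2))$ is a flat Minkowski plane: applying this once to $f_1,f_2$ and once to $f_3,f_4$ and feeding the negative half of $(\mathcal{P},\mathcal{C}^{-}(f_1,f_2)\cup\mathcal{C}^{+}(f_1,f_2))$ together with the positive half of $(\mathcal{P},\mathcal{C}^{-}(f_3,f_4)\cup\mathcal{C}^{+}(f_3,f_4))$ into Theorem~\ref{twohalves} returns exactly $\mathcal{M}_f$. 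Two remarks are used throughout. First, since $\mathcal{C}^{+}(f_1,f_2)=\varphi(\mathcal{C}^{-}(f_1,f_2))$ and $\varphi$ normalises $\Phi_\infty$, the map $\varphi$ sends the circle set of $\mathcal{M}$ to itself, so in every verification it suffices to treat one half, say the negative one. Second, $\Phi_\infty$ acts on $\mathcal{M}$: a short computation shows $(x,y)\mapsto(x+\beta,\alpha y+\gamma)$ carries $\overline{f_{a,b,c}}$ to $\overline{f_{\alpha a,\,b-\beta,\,\alpha c+\gamma}}$ and permutes $L$, and its point orbits are $\mathbb{R}^2$, $(\mathbb{S}^1\times\{\infty\})\setminus\{(\infty,\infty)\}$, $(\{\infty\}\times\mathbb{S}^1)\setminus\{(\infty,\infty)\}$ and $\{(\infty,\infty)\}$; hence in the axioms one of the relevant points may be moved to a convenient position.

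I would first check that each element of $\mathcal{C}^{-}(f_1,f_2)$ is the graph of an orientation-reversing homeomorphism of $\mathbb{S}^1$. For lines in $L$ this is clear. For $\overline{f_{a,b,c}}$, recall (Section~3) that a strongly hyperbolic function is a continuous strictly decreasing bijection of $\mathbb{R}^+$; thus on $(-b,\infty)$ the map $x\mapsto af_1(x+b)+c$ is a continuous strictly decreasing bijection onto $(c,\infty)$, on $(-\infty,-b)$ the map $x\mapsto -af_2(-x-b)+c$ is one onto $(-\infty,c)$, and the one-sided limits at $-b$ and at $\pm\infty$ are the adjoined points $(-b,\infty)$ and $(\infty,c)$. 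Hence $f_{a,b,c}$ extends to a strictly decreasing self-homeomorphism of $\mathbb{S}^1$, which is orientation-reversing; applying $\varphi$, the members of $\mathcal{C}^{+}(f_1,f_2)$ are graphs of orientation-preserving homeomorphisms.

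For the Axiom of Joining, let $p,q,r$ be pairwise non-parallel. The cyclic order of the three first coordinates together with that of the three second coordinates determines whether a circle through $p,q,r$ must be orientation-preserving or orientation-reversing; in the orientation-preserving case the unique joining circle lies in $\mathcal{C}^{+}(f_1,f_2)$ and is produced from the other case applied to $\varphi p,\varphi q,\varphi r$, and no circle of $\mathcal{C}^{-}(f_1,f_2)$ can pass through them. So we must exhibit a unique circle of $\mathcal{C}^{-}(f_1,f_2)$ through three pairwise non-parallel points in ``orientation-reversing position''. I split the argument: (a) if $(\infty,\infty)$ is one of the points, the joining circle must be a line $\overline{l_{s,t}}$ through the two remaining (proper) points, and $s<0$ is equivalent to the cyclic-order condition — elementary; (b) if some point is improper but $(\infty,\infty)$ is not among the three, each such point fixes $b$ or $c$, reducing to at most two free parameters, handled as a degeneration of (c); (c) all three points $p_i=(x_i,y_i)$ proper. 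In (c), the signs of $x_i+b$ say on which branch of $\overline{f_{a,b,c}}$ the point $p_i$ lies; for each branch pattern, eliminating $a$ and then $c$ from the three incidence equations by differences and a quotient leaves a single equation in $b$, which — using the asymptotics of Lemma~\ref{stronglyhyperboliclimits} and the sign and root analysis of Lemmas~\ref{T2M2}, \ref{T1meat}, \ref{M2full} and \ref{obviouscalculation} — has exactly one solution yielding a consistent pattern with $a>0$. Equivalently, and this is how I would organise it, one shows that any two distinct circles of $\mathcal{C}^{-}(f_1,f_2)$ meet in at most two points (whence uniqueness is automatic and existence follows from a connectedness argument in the parameter $b$): the common improper points, the common points on a shared convex branch (governed by $\widecheck{f}$) and those on a shared concave branch (governed by $\widehat{f}$) are exactly what Lemmas~\ref{M2full} and \ref{obviouscalculation} control — in particular $\widecheck{f}$ and $\widehat{f}$ cannot simultaneously have many roots — while the ``mixed'' range, where one of the two branches involved is convex and the other concave, must be treated directly: there the difference of the two circle functions is strictly convex and tends to $+\infty$ at both ends, and the inequalities imposed by the branch data keep the total intersection count at most two.

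For the Axiom of Touching, let $C$ be a circle, $p\in C$, $q\notin C$, $p\not\parallel q$; we seek a unique circle $D$ with $p,q\in D$ and $C\cap D=\{p\}$. Since an orientation-reversing self-homeomorphism of $\mathbb{S}^1$ has at least two fixed points, $C$ and $D$ must have the same orientation (otherwise $C\cap D$ would consist of a single point), so via $\varphi$ we may take both in $\mathcal{C}^{-}(f_1,f_2)$ and via $\Phi_\infty$ normalise $p$. The requirement $C\cap D=\{p\}$ says that on the relevant branch the difference of the two circle functions has $x_p$ as its only root and does not change sign there — a double root — which is precisely the situation of Lemma~\ref{derivativestronglyhyperbolic} (the derivative of such a difference has at most one root and changes sign at it) together with the tangential alternatives in Lemmas~\ref{T1meat}, \ref{M2full} and \ref{obviouscalculation}; these give existence and uniqueness of $D$, with the same direct treatment of the mixed range and the same elementary treatment when $C$ or the relevant points are improper or $C\in L$. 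I expect the main obstacle to be case (c) of Joining — equivalently, the sharp bound of two on the number of common points of two circles — and the corresponding part of Touching: the delicate point is the bookkeeping across the various branch patterns and, in particular, showing that the mixed range (which the stated $\widecheck{f}/\widehat{f}$ lemmas do not address) never pushes the intersection count beyond what the axioms allow. The homeomorphism property, the purely improper configurations and the configurations involving lines are routine.
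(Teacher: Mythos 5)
Your overall strategy --- reduce via Theorem~\ref{twohalves} and the symmetry $\varphi$ to the single negative half $\mathcal{C}^{-}(f_1,f_2)$, use $\Phi_\infty$ to normalise points, and control intersections through the root-counting lemmas for $\widecheck{f}$ and $\widehat{f}$ --- is the same as the paper's, and your observation that touching circles must have the same orientation (so the two halves can be handled independently) is exactly the justification behind the paper's reduction. You are also right to flag the mixed convex/concave range as something the $\widecheck{f}/\widehat{f}$ lemmas do not address; the paper disposes of it with a one-line assertion in the proof of Theorem~\ref{uniquenessjoining}, so your explicit convexity argument there is, if anything, more careful.

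The genuine gap is in existence. You propose to obtain existence of joining circles ``from a connectedness argument in the parameter $b$'' once the at-most-two-intersections bound is known, and existence of touching circles from ``the tangential alternatives'' in Lemmas~\ref{T1meat}, \ref{M2full} and \ref{obviouscalculation}. Neither works as stated: those lemmas classify the intersection pattern of two \emph{given} circles, they do not produce a circle through prescribed points, and an intersection bound plus connectedness does not by itself yield existence for a bare family of graphs (that implication is a nontrivial theorem about topological circle planes, not something you may simply invoke). The paper instead devotes most of Section~4 to explicit constructions: for Joining, a five-way case split by admissible position type, in each case eliminating $a$ and $c$ and applying the IVT to quotients such as $g(b)=\bigl(f_1(x_2+b)-f_1(b)\bigr)/\bigl(f_1(x_1+b)-f_1(b)\bigr)$, whose limits $1$ as $b\to 0^{+}$ and $x_2/x_1$ as $b\to+\infty$ come from condition~3 of Definition~\ref{stronglyhyperbolic} and Lemma~\ref{stronglyhyperboliclimits}; for Touching, solving the incidence system augmented by the tangency condition $af_1'(b)=s$ via the IVT applied to $f_1'(b)/f_1(b)$ and $\bigl(f_1(x_q+b)-f_1(b)\bigr)/f_1'(b)$, which is where parts 2, 3 and 5 of Lemma~\ref{stronglyhyperboliclimits} are actually needed. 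Without these constructions, or a rigorous substitute for your connectedness claim, the existence halves of both axioms remain unproven.
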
 

In view of Theorem \ref{twohalves}, to prove Theorem \ref{main} it is sufficient to prove that $\mathcal{C}^-(f_1,f_2)$ is the negative half of a flat Minkowski plane. We verify that this is the case by showing that $\mathcal{C}^-(f_1,f_2)$ satisfies  Axiom of Joining and Axiom of Touching in the next four subsections.

%

\subsection{Axiom of Joining, existence} 
 Three points $p_1,p_2,p_3 \in \mathcal{P}$ are in \textit{admissible position} if they can be joined by an element of the negative half of the classical flat Minkowski plane $\mathcal{M}_C$.  Furthermore, we say they are of type
	
	\begin{enumerate} [label=\arabic*] 
		
		\item if $p_1= (\infty,\infty),p_2,p_3 \in \mathbb{R}^2$,
		
		\item if $ p_1= (x_1,y_1) ,p_2= (\infty,y_2), p_3= (x_3,\infty),$ $x_i,y_i \in \mathbb{R}$,
		
		\item if $ p_1= (x_1,y_1),p_2=(x_2,y_2), p_3 = (x_3, \infty),$ $x_i,y_i \in \mathbb{R}$,
		
		\item if $p_1= (x_1,y_1),p_2=(x_2,y_2), p_3 = (\infty, y_3) $, $x_i,y_i \in \mathbb{R}$,
		
		\item if $p_1= (x_1,y_1),p_2=(x_2,y_2), p_3 = (x_3, y_3), x_i,y_i \in \mathbb{R}$. 	
	\end{enumerate}
 
We note that up to permutations, if three points are in admissible position, then they are in exactly one of the five admissible position types. The main theorem of this  subsection is the following.
 
\begin{theorem}[Axiom of Joining, existence] \label{existencejoining} Let $p_1,p_2,p_3 \in \mathcal{P}$ be three points in admissible  position.  Then there is at least one  element in 	$\mathcal{C}^{-}(f_1,f_2)$ that contains $p_1,p_2,p_3$.
\end{theorem}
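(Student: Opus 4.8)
The plan is to dispose of the five admissible-position types one at a time. No member of $F$ contains $(\infty,\infty)$ whereas every line in $L$ does, so the joining element must be a line in Type~1 and must lie in $F$ in Types~2, 3, 4; moreover, passing through $(x_3,\infty)$ forces $b=-x_3$ and passing through $(\infty,y_i)$ forces $c=y_i$, so those ``infinite'' coordinates fix part of the parameters for free. In Type~1 one takes the negative-slope line through $p_2,p_3$, which exists because the three points lie in admissible position. In Type~5 one first tests whether $p_1,p_2,p_3$ are collinear, in which case the common line has negative slope (again by admissibility) and lies in $L$. After this, every remaining case amounts to finding $a>0,b,c$ with $\overline{f_{a,b,c}}$ through the three points, and the classical joining circle is then a hyperbola $(x-b')(y-c')=a'$ with $a'>0$.

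For those cases one writes down $f_{a,b,c}(x_i)=y_i$; the branch of $\overline{f_{a,b,c}}$ carrying $x_i$ is determined by the sign of $x_i+b$, and I would choose the branch pattern so that the $x_i$ sit relative to $-b$ as they do relative to $b'$ on the classical hyperbola. With the pattern fixed, subtracting pairs of the identities $f_{a,b,c}(x_i)=y_i$ eliminates $c$, and dividing the two resulting identities eliminates $a$, leaving a single equation $R(b)=K$, where $R(b)$ is an explicit ratio of sums and differences of $f_1,f_2$ at arguments $x_i+b$ or $-x_i-b$ and $K$ is a ratio of the $y_i$. One checks that $R$ is continuous and of constant sign on the relevant open $b$-interval and finds its endpoint limits: at an end where $b\to -x_i$ one gets $\infty$, $1$ or $0$ from the blow-up of $f_1$ or $f_2$ near $0$ (Definition~\ref{stronglyhyperbolic}(1)); at an end where $b\to\pm\infty$ one gets $1$, $0$ or a slope-ratio such as $(x_2-x_1)/(x_3-x_2)$ from Definition~\ref{stronglyhyperbolic}(3) together with Lemma~\ref{stronglyhyperboliclimits}(3),(4). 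The intermediate value theorem then yields a $b$ provided $K$ lies between those limits, and the needed inequality on $K$ (typically $K>1$, or $K>(x_2-x_1)/(x_3-x_2)$, or $K\in(0,(x_2-x_1)/(x_3-x_2))$) is exactly what substituting $(x_i-b')(y_i-c')=a'$ into the definition of $K$ produces. Finally one recovers $a$ from an eliminated identity --- its positivity once more matching the sign forced by the classical equation --- recovers $c$, and checks that each $x_i$ lies on the assumed branch, which holds since $b$ was obtained inside the interval defining that pattern. Concretely, Type~2 is a single linear equation for $a$, Type~3 a $2\times2$ linear system for $(a,c)$ once $b=-x_3$, Type~4 has $c=y_3$ fixed and one nonlinear equation in $b$, and Type~5 (both $b,c$ free) splits into four branch patterns.

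The main obstacle is Type~5, Type~4 being a smaller instance of the same difficulty: $b$ occurs nonlinearly inside the arguments of $f_1,f_2$, so the equation to solve genuinely changes shape across the thresholds $b=-x_i$, and each branch pattern must be handled on its own. The substantive work is, for each pattern, to verify that $R(b)$ is well defined and strictly one-signed on its interval and that its two endpoint limits straddle $K$; this is where the asymptotic and convexity axioms for strongly hyperbolic functions are indispensable, with Lemma~\ref{stronglyhyperboliclimits}(4) (the limit $(f(x+s)-f(x))/(f(x+t)-f(x))\to s/t$) controlling the $b\to\pm\infty$ ends. Types~1 and 2 and the collinear subcase of Type~5 are immediate.
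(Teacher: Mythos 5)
Your proposal is correct and follows essentially the same route as the paper: the same five-type case split, the same use of the points at infinity to pin down $b$ or $c$, the same elimination of $c$ then $a$ to reduce to a single equation $R(b)=K$, and the same IVT argument with endpoint limits supplied by Definition \ref{stronglyhyperbolic}(1),(3) and Lemma \ref{stronglyhyperboliclimits}, with admissibility guaranteeing $K$ lies between them (e.g.\ $1<y_2/y_1<x_2/x_1$ in the paper's Case 1 of Type 5). The only cosmetic difference is that you derive the inequality on $K$ by explicitly substituting the classical hyperbola through the three points, where the paper reads it off directly from the case-defining inequalities on the $x_i,y_i$.
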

We prove Theorem \ref{existencejoining} in the following five lemmas. 

\begin{lemma} \label{ejointype1} Let $p_1,p_2,p_3$ be three points in admissible position type 1. Then there exist $s_0<0, t_0 \in \mathbb{R}$ such that   $\overline{l_{s_0,t_0}}$ contains $p_1,p_2,p_3$.
	\begin{proof} Since $\mathcal{C}^{-}(f_1,f_2)$ contains lines with negative slope extended by the point $(\infty,\infty)$, the choice of $\overline{l_{s_0,t_0}}$ is the line containing $p_2,p_3$.
	\end{proof}
\end{lemma}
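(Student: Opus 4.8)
The plan is to exhibit the joining circle directly. Since $p_1=(\infty,\infty)$, the only members of $\mathcal{C}^-(f_1,f_2)=F\cup L$ that can contain $p_1$ are the extended lines in $L$: every element of $F$ meets the parallel classes at the finite points $(-b,\infty)$ and $(\infty,c)$, hence misses $(\infty,\infty)$. So I would take $\overline{l_{s_0,t_0}}$ to be the extension of the affine line through $p_2$ and $p_3$, and then the only thing left to check is that it actually lies in $L$, i.e. that $s_0<0$.

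First I would record that $p_2=(x_2,y_2)$ and $p_3=(x_3,y_3)$ are nonparallel: being in admissible position they lie on a common circle of $\mathcal{M}_C$, which is the graph of a homeomorphism of $\mathbb{S}^1$, and this forces $x_2\ne x_3$ and $y_2\ne y_3$. Consequently there is a unique affine line $y=s_0x+t_0$ through $p_2$ and $p_3$, with slope $s_0=(y_3-y_2)/(x_3-x_2)\ne 0$.

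The one point needing an argument is $s_0<0$, and this is exactly where admissibility of type $1$ is used. By definition, $p_1,p_2,p_3$ lie on some circle $C$ in the negative half of $\mathcal{M}_C$; reading off Example \ref{ex:classical}, a classical circle through $(\infty,\infty)$ must be of the form $\{(x,sx+t)\mid x\in\mathbb{R}\}\cup\{(\infty,\infty)\}$ with $s\ne 0$ (the hyperbola-type circles contain $(b,\infty)$ and $(\infty,c)$ with $b,c$ finite, so they never pass through $(\infty,\infty)$), and such a line lies in the negative half precisely when it is the graph of an orientation-reversing homeomorphism of $\mathbb{S}^1$, that is, when $s<0$. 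Since $C$ also contains $p_2$ and $p_3$, it coincides with the line $y=s_0x+t_0$, whence $s_0<0$ and $\overline{l_{s_0,t_0}}\in L\subseteq\mathcal{C}^-(f_1,f_2)$. This line visibly contains $p_1$, $p_2$, and $p_3$, completing the argument. I do not expect any real obstacle: the whole content is the elementary observation that, among negative-half classical circles, those through $(\infty,\infty)$ are exactly the negative-slope lines — which is in effect what it means for the three points to be admissible of type $1$.
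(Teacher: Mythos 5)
Your proposal is correct and follows the same route as the paper: take the affine line through $p_2$ and $p_3$, extended by $(\infty,\infty)$, and note that admissibility of type 1 forces its slope to be negative so that it lies in $L$. You simply spell out the details (nonparallelism of $p_2,p_3$ and the sign of the slope) that the paper's one-line proof leaves implicit.
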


\begin{lemma} \label{ejointype2} Let $p_1,p_2,p_3$ be three points in admissible position type 2. Then there exist  $a_0>0,b_0,c_0\in \mathbb{R}$   such that $\overline{f_{a_0,b_0,c_0}}$ contains $p_1,p_2,p_3$. 
	\begin{proof} Without loss of generality, we can assume $y_2=x_3=0$ so that $p_2=(\infty,0), p_3=(0,\infty)$. If $\overline{f_{a_0,b_0,c_0}}$ contains $(\infty,0), (0,\infty)$, then   $b_0=c_0=0$.  Since the points $p_1,p_2,p_3$ are in admissible position, there are two cases depending on $x_1,y_1$. 
		\begin{enumerate}[label=Case  \arabic*:,leftmargin=0pt,itemindent=*]
			
			\item 	$x_1,y_1>0$. The equation
			$
			y_1=af_1(x_1)
			$
			has a solution $a_0=\dfrac{y_1}{f_1(x_1)}$, and we have $\overline{f_{a_0,b_0,c_0}}$ whose convex branch contains $p_1$.
			
			\item $x_1,y_1<0$.  The equation
			$
			y_1=af_1(x_1)
			$
			has a solution $a_0=-\dfrac{y_1}{f_2(-x_1)}$, and we have $\overline{f_{a_0,b_0,c_0}}$ whose concave branch contains $p_1$. \qedhere
		\end{enumerate}
	\end{proof}
\end{lemma}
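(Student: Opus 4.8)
The plan is to use that the set $\overline{f_{a,b,c}}$ already contains the two ``points at infinity'' $(-b,\infty)$ and $(\infty,c)$ by construction, so the parameters $b_0,c_0$ are forced by the two improper points among $p_1,p_2,p_3$, and only $a_0>0$ remains to be chosen so that the circle passes through the finite point.

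First I would normalise by a translation. Write $p_1=(x_1,y_1)$, $p_2=(\infty,y_2)$, $p_3=(x_3,\infty)$ for the three points of type $2$ and apply the map $(x,y)\mapsto(x-x_3,\,y-y_2)$. This is an automorphism of the classical flat Minkowski plane $\mathcal{M}_C$, it preserves the notion of admissible position, it sends each $\overline{l_{s,t}}$ to an $\overline{l_{s,t'}}$ with the same negative slope, and it sends $\overline{f_{a,b,c}}$ to $\overline{f_{a,\,b+x_3,\,c-y_2}}$; hence it permutes the family $\mathcal{C}^-(f_1,f_2)=F\cup L$. So after the translation I may assume $p_2=(\infty,0)$ and $p_3=(0,\infty)$. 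For $\overline{f_{a_0,b_0,c_0}}$ to contain these two points, its improper points $(-b_0,\infty)$ and $(\infty,c_0)$ must be $(0,\infty)$ and $(\infty,0)$, forcing $b_0=c_0=0$. Thus the candidate circle is the graph of $x\mapsto a_0f_1(x)$ on $x>0$ together with $x\mapsto -a_0f_2(-x)$ on $x<0$, with $a_0>0$ still to be determined.

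Next I would pin down the sign of $p_1$ from admissibility: since $p_1,p_2,p_3$ lie on a negative-half circle of $\mathcal{M}_C$ through $(\infty,0)$ and $(0,\infty)$, and lines extended by $(\infty,\infty)$ cannot contain $(\infty,0)$, that circle is a hyperbola of the form $xy=a^{*}$ with $a^{*}>0$; hence $x_1y_1=a^{*}>0$, so $x_1$ and $y_1$ have the same nonzero sign. Then I split into the two cases suggested by the two branches of $\overline{f_{a_0,0,0}}$. If $x_1>0$ (hence $y_1>0$), the equation $y_1=a_0f_1(x_1)$ has the solution $a_0=y_1/f_1(x_1)$, which is positive since $f_1$ maps into $\mathbb{R}^+$, and the convex branch of $\overline{f_{a_0,0,0}}$ then passes through $p_1$. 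If $x_1<0$ (hence $y_1<0$), then $a_0=-y_1/f_2(-x_1)>0$ solves $y_1=-a_0f_2(-x_1)$ and the concave branch passes through $p_1$. In either case $\overline{f_{a_0,b_0,c_0}}$ contains $p_1,p_2,p_3$.

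I do not expect a genuine obstacle here: this is the easiest of the five admissible-position cases precisely because the two improper points determine $b_0$ and $c_0$ outright, and positivity of $f_1$ and $f_2$ makes the single remaining scalar equation solvable with $a_0>0$. The only point that needs a line of justification is the normalisation, namely that $x$- and $y$-translations act on $\mathcal{C}^-(f_1,f_2)$ and on $\mathcal{M}_C$; both follow immediately from the defining formula for $f_{a,b,c}$.
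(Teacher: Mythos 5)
Your proposal is correct and follows essentially the same route as the paper: force $b_0=c_0=0$ from the two improper points and solve the single equation for $a_0>0$ on the appropriate branch, with the same formulas $a_0=y_1/f_1(x_1)$ and $a_0=-y_1/f_2(-x_1)$. You merely make explicit two things the paper leaves tacit (that translations preserve $\mathcal{C}^-(f_1,f_2)$, and that admissibility forces $x_1y_1>0$), which is fine.
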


\begin{lemma} \label{ejointype3} Let $p_1,p_2,p_3$ be three points in admissible position type 3. Then there exist  $a_0>0,b_0,c_0\in \mathbb{R}$   such that  $\overline{f_{a_0,b_0,c_0}}$ contains $p_1,p_2,p_3$. 
	
	\begin{proof}  We can assume $x_3=0,y_2=0,x_1>x_2$. 
		 Let $b_0=0$. 
		There are three cases depending on the positions of $p_1$ and $p_2$.  
			We show that there is a choice of $a_0,c_0$ in each case.
		\begin{enumerate}[label=Case  \arabic*:,leftmargin=0pt,itemindent=*]
			\item $x_1>x_2>0$, and $y_1<0$. We consider the system 
			\begin{equation*} 
			\begin{cases} 
			y_1= af_1(x_1)+c\\
			0= af_1(x_2)+c
			\end{cases}
			\end{equation*} 
			in variables $a,c$. The solution for this system is 
			$
			a_0=\dfrac{y_1}{f_1(x_1)-f_1(x_2)}          
			$	 and  $c_0=-a_0f_1(x_2)$.  Then $\overline{f_{a_0,b_0,c_0}}$ contains $p_1$ and $p_2$  on its convex branch.   
			\item $x_1>0>x_2,$ and $y_1>0$. We claim that there exists $\overline{f_{a_0,b_0,c_0}}$ whose convex branch contains $p_1$ and whose concave branch contains $p_2,$  that is, the system
			\begin{equation*} 
			\begin{cases} 
			y_1= af_1(x_1)+c\\
			0= -af_2(-x_2)+c
			\end{cases}
			\end{equation*} 
			has a solution $a_0,c_0$. It is sufficient to show that the function $g: \mathbb{R}^+ \rightarrow \mathbb{R}$  defined by $$g(a)= (f_1(x_1)+f_2(-x_2))a-y_1,$$ has a root $a_0$, which is immediate from the IVT.  
			
			\item  $0>x_1>x_2,$ and $y_1<0$. One can show that there exists $\overline{f_{a_0,b_0,c_0}}$ whose concave branch contains $p_1$ and $p_2.$ This is similar to Case 1. \qedhere
		\end{enumerate}
	\end{proof} 
\end{lemma}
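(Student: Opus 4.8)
The plan is to pin down the three parameters in turn: $b_0$ is forced by the point at infinity $p_3$, and $a_0,c_0$ are then obtained by solving an explicit $2\times2$ linear system built from $p_1$ and $p_2$. The only nontrivial point is to check that the resulting $a_0$ is positive, and that is exactly where the admissibility hypothesis must be used; without it the statement would be false.

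First I would normalise. The family $F\cup L$ is invariant under the maps $(x,y)\mapsto(x+\beta,y)$ and $(x,y)\mapsto(x,y+\gamma)$ --- the first carries $\overline{f_{a,b,c}}$ to $\overline{f_{a,\,b-\beta,\,c}}$, the second to $\overline{f_{a,\,b,\,c+\gamma}}$, and both fix $L$ and the negative half of $\mathcal{M}_C$ setwise --- so I may assume $x_3=0$ and $y_2=0$. Since $\overline{f_{a,b,c}}$ meets the horizontal at infinity only in the point $(-b,\infty)$, the requirement $p_3=(0,\infty)\in\overline{f_{a_0,b_0,c_0}}$ forces $b_0=0$. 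Admissibility makes $p_1,p_2,p_3$ pairwise nonparallel, so $x_1,x_2\neq0$ and $y_1\neq y_2=0$; relabelling $p_1,p_2$ if necessary, assume $x_1>x_2$.

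Next I would split into the three sign patterns compatible with $x_1>x_2$ and $x_1,x_2\neq0$. With $b_0=0$ the convex branch of $\overline{f_{a,0,c}}$ is $\{(x,af_1(x)+c):x>0\}$ and the concave branch is $\{(x,-af_2(-x)+c):x<0\}$. If $x_1>x_2>0$, both points must lie on the convex branch, so $(a_0,c_0)$ must solve $y_1=af_1(x_1)+c$ and $0=af_1(x_2)+c$, with unique solution $a_0=y_1/(f_1(x_1)-f_1(x_2))$, $c_0=-a_0f_1(x_2)$. If $x_1>0>x_2$, then $p_1$ lies on the convex branch and $p_2$ on the concave branch, giving $y_1=af_1(x_1)+c$, $0=-af_2(-x_2)+c$; eliminating $c$ reduces this to finding a root of the increasing affine function $a\mapsto(f_1(x_1)+f_2(-x_2))a-y_1$. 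If $0>x_1>x_2$, both points lie on the concave branch, and the computation mirrors the first case with $f_2(-x)$ replacing $f_1(x)$. In each case one gets a unique candidate $\overline{f_{a_0,0,c_0}}$ which by construction contains $p_1,p_2$, and also $p_3$ since $b_0=0$.

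The main obstacle is to show $a_0>0$, so that $\overline{f_{a_0,0,c_0}}\in F\subseteq\mathcal{C}^-(f_1,f_2)$. Here I would invoke admissibility: $p_1,p_2,p_3$ lie on a circle of the negative half of $\mathcal{M}_C$, which --- again because it passes through $p_3$ --- is a hyperbola $x(y-c)=a$ with $a>0$ through $p_1$ and $p_2$. Solving that classical system gives $a=-x_1x_2y_1/(x_1-x_2)$, and since $x_1-x_2>0$ in all three cases the condition $a>0$ amounts to $y_1<0$ when $x_1x_2>0$ (the first and third cases) and to $y_1>0$ when $x_1x_2<0$ (the second case). Since $f_1$ is strictly decreasing --- a consequence of strong hyperbolicity --- one has $f_1(x_1)-f_1(x_2)<0$, so $a_0>0$ in the first case; the analogous comparison, together with $f_1,f_2>0$, yields $a_0>0$ in the other two. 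Everything else being bookkeeping, this sign check is the heart of the argument.
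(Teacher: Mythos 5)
Your proposal is correct and follows essentially the same route as the paper: force $b_0=0$ from $p_3$, split into the same three sign cases, and solve the same $2\times 2$ linear systems, with the second case handled by the IVT. The only difference is that you explicitly derive the sign conditions on $y_1$ from admissibility via the classical hyperbola and check $a_0>0$, details the paper leaves implicit in its case statements.
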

 
\begin{lemma} \label{ejointype4} Let $p_1,p_2,p_3$ be three points in admissible position type 4. Then   there exist  $a_0>0,b_0,c_0\in \mathbb{R}$   such that   $\overline{f_{a_0,b_0,c_0}}$ contains $p_1,p_2,p_3$. 
	
	\begin{proof}  We can assume $y_3=0, x_2=0, y_1>y_2$. Let $c_0=0$.  There are three cases.
		
		\begin{enumerate}[label=Case  \arabic*:,leftmargin=0pt,itemindent=*]
			
			\item $y_1>y_2>0$, and $x_1<0.$ We show that there exists $\overline{f_{a_0,b_0,c_0}}$ whose convex branch contains $p_1$ and $p_2,$ by showing that the system
			\begin{equation} \label{ejointype4eqn1}
			\begin{cases} 
			y_1= af_1(x_1+b)\\
			y_2= af_1(b)
			\end{cases}
			\end{equation} 
			has a solution $a_0,b_0$. Eliminating the variable $a$, we get
			\begin{equation} \label{ejointype4eqn2}
			\dfrac{y_1}{y_2}=\dfrac{f_1(x_1+b)}{f_1(b)}.
			\end{equation}
			We consider the function $g: (-x_1,+\infty) \rightarrow \mathbb{R}$ defined by $g(b) = \dfrac{f_1(x_1+b)}{f_1(b)}$. We have that $g(b)$ continuous, $\lim\limits_{b \rightarrow +\infty} g(b) = 1$ by Definition \ref{stronglyhyperbolic}, and $\lim\limits_{b \rightarrow -x_1+} g(b)= +\infty$. By the IVT, there exists $b_0$ such that 
			$g(b_0) =\dfrac{y_1}{y_2}>1$, that is,  $b_0$ is a root of (\ref{ejointype4eqn2}). This shows that (\ref{ejointype4eqn1}) also has a solution.
			
			\item $y_1>0>y_2$, and $x_1>0$. It can be shown that there exists $\overline{f_{a_0,b_0,c_0}}$ whose convex branch contains $p_1$ and whose concave branch contains $p_2$.  This is similar to Case 1. 
%
			
			\item $0>y_1>y_2$, and $x_1<0$. It can be shown that there exists $\overline{f_{a_0,b_0,c_0}}$ whose concave branch contains $p_1$ and $p_2$. This is similar to Case 1. \qedhere
			
		\end{enumerate}
	\end{proof}

\end{lemma}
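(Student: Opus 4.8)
The plan is to exploit the fact that the point at infinity $(\infty,y_3)$ lies on a curve $\overline{f_{a,b,c}}$ precisely when $c=y_3$. First I would normalise: applying suitable maps from $\Phi_\infty$, which preserve the families $F$ and $L$ and hence the negative half $\mathcal{C}^-(f_1,f_2)$, I may assume $y_3=0$, $x_2=0$, and, after possibly interchanging $p_1$ and $p_2$, that $y_1>y_2$. Membership of $(\infty,0)$ then forces $c_0=0$, so the candidate curve is governed by the two remaining parameters $a>0$ and $b$, with convex branch $y=af_1(x+b)$ on $x>-b$ (where $y>0$) and concave branch $y=-af_2(-x-b)$ on $x<-b$ (where $y<0$). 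Both branches are strictly decreasing because $f_1,f_2$ are strictly decreasing, so the sign of $y_i$ determines which branch must contain $p_i$.

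Next I would split into the three cases dictated by the signs of $y_1,y_2$: both positive (both points on the convex branch), $y_1>0>y_2$ (a point on each branch), and both negative (both points on the concave branch). In each case the strict monotonicity of the relevant branch together with $x_2=0$ and $y_1>y_2$ forces the sign of $x_1$ (for instance, on the convex branch larger $y$ means smaller $x$, so $x_1<0$), which is exactly the sign that admissibility supplies. Writing down the two membership equations and eliminating $a$ reduces each case to a single equation $g(b)=y_1/y_2$ for a ratio $g$ of values of $f_1$ and/or $f_2$: in the both-convex case $g(b)=f_1(x_1+b)/f_1(b)$ on $(-x_1,\infty)$, in the mixed case $g(b)=f_1(x_1+b)/f_2(-b)$ on $(-x_1,0)$, and in the both-concave case $g(b)=f_2(-x_1-b)/f_2(-b)$ on $(-\infty,0)$.

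I would then solve each scalar equation by the IVT, reading off the endpoint limits from Definition \ref{stronglyhyperbolic}. In the both-convex case, condition 1 gives $g(b)\to+\infty$ as $b\to -x_1+$ (the numerator blows up while the denominator stays finite) and condition 3 gives $g(b)\to 1$ as $b\to\infty$; since $y_1>y_2>0$ the target $y_1/y_2$ exceeds $1$, so a root $b_0$ exists and $a_0=y_2/f_1(b_0)>0$. The mixed case has $g\to+\infty$ at $b\to -x_1+$ and $g\to 0$ at $b\to 0-$, bracketing the positive target $y_1/\lvert y_2\rvert$; the both-concave case has $g\to 0$ at $b\to 0-$ and $g\to 1$ at $b\to-\infty$, bracketing the target $y_1/y_2\in(0,1)$. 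In each case the recovered $a_0$ is positive.

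I expect the only real content to be these endpoint computations, which rest squarely on the two asymptotic conditions of a strongly hyperbolic function: the blow-up and decay at the boundary (condition 1), producing the limits $+\infty$ and $0$, and the normalisation $f(x+b)/f(x)\to 1$ (condition 3), producing the limit $1$. A minor point to dispatch is checking that the forced sign of $x_1$ is consistent with admissibility, so that the three cases are exhaustive and none is vacuous; this is a finite verification requiring no estimates.
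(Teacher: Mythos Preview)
Your proposal is correct and follows essentially the same approach as the paper: the same normalisation $y_3=0$, $x_2=0$, $y_1>y_2$, $c_0=0$, the same three-way case split on the signs of $y_1,y_2$, and the same reduction via elimination of $a$ to a ratio equation solved by the IVT using conditions 1 and 3 of Definition~\ref{stronglyhyperbolic}. The only difference is that you actually write out the ratio functions and their endpoint limits for Cases~2 and~3, whereas the paper dismisses these as ``similar to Case~1''; your endpoint computations for those cases are correct.
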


\begin{lemma} \label{ejointype5} Let $p_1,p_2,p_3$ be three points in admissible position type 5. Then  either there exist  $a_0>0,b_0,c_0\in \mathbb{R}$  such that   $\overline{f_{a_0,b_0,c_0}}$ contains $p_1,p_2,p_3$;  or,   there exist  $s_0<0,t_0\in \mathbb{R}$   such that   $\overline{l_{s_0,t_0}}$ contains $p_1,p_2,p_3$. 
	\begin{proof} We can assume $x_3=y_3=0$ and $0<x_1<x_2$. For nontriviality, we further assume the three points are not collinear. There are four cases depending on the values of $y_1$ and $y_2$. 
		\begin{enumerate}[label=Case  \arabic*:,leftmargin=0pt,itemindent=*]
			
			\item  $y_2<y_1<0$ and $y_1<\dfrac{x_1}{x_2}y_2<0$.  We claim  that there exists $\overline{f_{a_0,b_0,c_0}}$ whose convex branch contains $p_1, p_2, p_3,$  by showing that the system
			$$ 
			\begin{cases}
			y_1= af_1(x_1+b)+c\\
			y_2= af_1(x_2+b)+c\\
			0= af_1(b)+c
			\end{cases}
			$$
			has a solution $a_0,b_0,c_0$. The third equation gives $c=-af_1(b)$. Substitute this into the first two equations and rearrange, we get 
			\begin{equation} \label{ejointype5eqn1}
			a= \dfrac{y_1}{f_1(x_1+b)-f_1(b)}= \dfrac{y_2}{f_1(x_2+b)-f_1(b)}.
			\end{equation}
			We consider the function $g: (0,+\infty) \rightarrow \mathbb{R}$ defined by
			$$
			g(b)= \dfrac{f_1(x_2+b)-f_1(b)}{f_1(x_1+b)-f_1(b)}=1+\dfrac{f_1(x_2+b)-f_1(x_1+b)}{f_1(x_1+b)-f_1(b)} .
			$$
		We note that $g$ is continuous and  $\lim\limits_{b \rightarrow 0+} g(b) =1$. Also, $\lim\limits_{b \rightarrow +\infty} g(b) = \dfrac{x_2}{x_1}$ by Lemma \ref{stronglyhyperboliclimits}. The conditions of $x_i,y_i$ imply that $1<\dfrac{y_2}{y_1}<\dfrac{x_2}{x_1}$. By the IVT, there exists $b_0$ such that $g(b_0)=\dfrac{y_2}{y_1}$. This shows that the second equality in (\ref{ejointype5eqn1}) has a solution and the claim follows.

			\item $y_1>y_2>0$. It can be shown that exists $\overline{f_{a_0,b_0,c_0}}$ whose convex branch contains $p_1, p_2$ and whose concave branch contains  $p_3.$  This is similar to Case 1. 
			\item $y_1<0$, and $y_2>0$. It can be shown that  there exists $\overline{f_{a_0,b_0,c_0}}$ whose concave branch contains $p_1, p_3$ and whose convex branch contains $p_2$. This is similar to Case 1.

			\item $y_2<y_1<0$ and $\dfrac{x_1}{x_2}y_2<y_1<0$. It can be shown that  there exists $\overline{f_{a_0,b_0,c_0}}$ whose concave branch contains $p_1, p_2,p_3$. This is similar to Case 1. \qedhere
			
		\end{enumerate}
		
	\end{proof}
\end{lemma}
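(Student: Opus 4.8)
The plan is to reduce to a normal form using the group $\Phi_\infty$, and then to split into a few cases each solved by a one-variable intermediate value argument. First I would note that $\mathcal C^{-}(f_1,f_2)=F\cup L$ is $\Phi_\infty$-invariant: $F$ is the $\Phi_\infty$-orbit of $\overline{f_{1,0,0}}$, while $L$, the negative-slope lines completed by $(\infty,\infty)$, is visibly sent to itself; moreover $\Phi_\infty$ acts by automorphisms of the classical plane $\mathcal M_C$ which fix the point $\infty$ in each $\mathbb S^1$-factor and preserve the negative half of $\mathcal M_C$, hence it carries admissible triples of type $5$ to admissible triples of type $5$. So, picking the point of least first coordinate to play the role of $p_3$ and applying a suitable $(x,y)\mapsto(x-x_3,y-y_3)$, I may assume $p_3=(0,0)$, and after relabelling $p_1,p_2$ I may assume $0<x_1<x_2$. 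If the three points are collinear, the line through them has slope $y_1/x_1=y_2/x_2<0$ and lies in $L$, so we are done; hence assume they are not collinear.

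Next I would pin down which triples can actually occur. An admissible triple with $p_3$ at the origin is joined, inside the negative half of $\mathcal M_C$, by a line of negative slope (the collinear case, already handled) or by a ``hyperbola'' $y=a/(x-b)+c$ with $a>0$ through the origin, and a direct computation with the latter shows that $(y_1,y_2)$ must fall into exactly one of four regimes, each dictating a unique placement of $p_1,p_2,p_3$ on the convex and concave branches of the sought $\overline{f_{a_0,b_0,c_0}}$:
\begin{enumerate}
\item $y_2<y_1<0$ and $y_1<\tfrac{x_1}{x_2}y_2$: all three points on the convex branch;
\item $y_1>y_2>0$: $p_1,p_2$ on the convex branch, $p_3$ on the concave branch;
\item $y_1<0<y_2$: $p_2$ on the convex branch, $p_1,p_3$ on the concave branch;
\item $y_2<y_1<0$ and $\tfrac{x_1}{x_2}y_2<y_1$: all three points on the concave branch.
\end{enumerate}
Every other sign pattern fails to be in admissible position and need not be treated.

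In each regime the construction follows the same template; I would carry it out in full for the first regime and remark that the others are analogous. Placing all three points on the convex branch, the incidence equations are $y_i=a_0f_1(x_i+b_0)+c_0$; the equation at $p_3=(0,0)$ gives $c_0=-a_0f_1(b_0)$, and dividing the equations at $p_1,p_2$ also eliminates $a_0$, leaving the single scalar equation
$$
g(b):=\frac{f_1(x_2+b)-f_1(b)}{f_1(x_1+b)-f_1(b)}=\frac{y_2}{y_1},\qquad b\in(0,\infty).
$$
Rewriting $g(b)=1+\dfrac{f_1(x_2+b)-f_1(x_1+b)}{f_1(x_1+b)-f_1(b)}$ shows that $g$ is continuous with $\lim_{b\to0+}g(b)=1$, while $\lim_{b\to+\infty}g(b)=x_2/x_1$ by Lemma \ref{stronglyhyperboliclimits}(4); the regime hypothesis says exactly that $y_2/y_1$ lies strictly between these two values, so the IVT yields a root $b_0\in(0,\infty)$, whereupon $a_0=y_1/(f_1(x_1+b_0)-f_1(b_0))>0$ and $c_0=-a_0f_1(b_0)$ finish the job. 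Regimes $2$, $3$, $4$ go through in the same way, with $f_2$ or a combination of $f_1,f_2$ replacing $f_1$; the required limits come again from Definition \ref{stronglyhyperbolic}(3) and Lemma \ref{stronglyhyperboliclimits}, and the regime hypothesis again lands the target of the intermediate value argument inside the interval spanned by the two limits.

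The hard part is not the analysis but the bookkeeping: extracting from the hypothesis ``admissible position of type $5$'' the precise list of four regimes and the branch configuration forced in each, and then checking that the prescribed placement of the points is compatible with the vertical- and horizontal-asymptote constraints ($x_i>-b_0$ versus $x_i<-b_0$, and $y_i>c_0$ versus $y_i<c_0$). Once the correct configuration is identified, obtaining $(a_0,b_0,c_0)$ is a routine IVT argument of the same kind already used in Lemmas \ref{ejointype3} and \ref{ejointype4}.
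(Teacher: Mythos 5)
Your proposal is correct and follows essentially the same route as the paper: normalize so that $p_3=(0,0)$ and $0<x_1<x_2$, dispose of the collinear case with a line in $L$, split into the same four regimes with the same branch placements, and in the first regime eliminate $c$ and $a$ to reduce to the single equation $g(b)=y_2/y_1$ solved by the IVT using $\lim_{b\to0+}g(b)=1$ and $\lim_{b\to+\infty}g(b)=x_2/x_1$ from Lemma \ref{stronglyhyperboliclimits}. The remaining regimes are treated as analogous in both your write-up and the paper's.
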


\subsection{Axiom of Joining, uniqueness}	
In this subsection we prove the following.
\begin{theorem}[Axiom of Joining, uniqueness] \label{uniquenessjoining} Two distinct elements $C,D \in \mathcal{C}^{-}(f_1,f_2)$ have at most two intersections.
\end{theorem}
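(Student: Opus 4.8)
The plan is to case-split according to whether each of $C,D$ lies in $L$ (a line of negative slope closed up by the point $(\infty,\infty)$) or in $F$ (a set $\overline{f_{a,b,c}}$), and to exploit the invariance of $\mathcal{C}^-(f_1,f_2)$ under $\Phi_\infty$: a direct computation shows that $(x,y)\mapsto(x+\beta,\alpha y+\gamma)$ carries $\overline{f_{a,b,c}}$ to $\overline{f_{\alpha a,\,b-\beta,\,\alpha c+\gamma}}$ and $\overline{l_{s,t}}$ to a line of slope $\alpha s<0$. So whenever an element of $F$ is involved I normalise it to $\overline{f_{1,0,0}}$, whose convex branch is $\{(x,f_1(x)):x>0\}$, whose concave branch is $\{(x,-f_2(-x)):x<0\}$, and whose two points at infinity are $(0,\infty)$ and $(\infty,0)$.

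If $C,D\in L$, both pass through $(\infty,\infty)$ and, being distinct affine lines, meet in at most one further point. If $C=\overline{f_{1,0,0}}$ and $D=\overline{l_{s,t}}$ with $s<0$, then $(\infty,\infty)$ is neither point at infinity of $C$, so all intersections are finite: on the convex branch they are the zeros of $f_1(x)-sx-t$ on $(0,\infty)$, and since $f_1(x)-sx$ is strictly convex, tends to $+\infty$ at both ends, and is strictly positive there (as $f_1>0$ and $-sx>0$), this has zeros only if $t>0$, and then at most two; on the concave branch, substituting $u=-x$, they are the zeros of $f_2(u)-su+t$ on $(0,\infty)$, which for the same reason has zeros only if $t<0$, and then at most two. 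Since $t>0$ and $t<0$ are incompatible, there are at most two intersections.

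The remaining, main case is $C=\overline{f_{1,0,0}}$ and $D=\overline{f_{a,b,c}}$ with $(a,b,c)\ne(1,0,0)$. Their points at infinity coincide exactly when $b=0$ (common point $(0,\infty)$) or $c=0$ (common point $(\infty,0)$). The finite intersections fall into at most three intervals: $x>\max\{0,-b\}$, where both curves are on their convex branch and the difference $C-D$ is the function $\widecheck{f}$ of the previous section; $x<\min\{0,-b\}$, where both are on their concave branch and $C-D$ is $\widehat{f}$; and, only when $b\ne0$, the interval between $0$ and $-b$, on which one curve is convex and the other concave, so that $C-D$ is strictly concave and tends to $-\infty$ at both ends, hence has at most two zeros. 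When $b=c=0$ we have $a\ne1$, and $f_1(x)=af_1(x)$, $f_2(-x)=af_2(-x)$ have no positive solutions, so the only intersections are the two points at infinity. When exactly one of $b,c$ vanishes, there is one common point at infinity; comparing the value ranges of the two curves shows the middle interval carries no intersection, and Lemma \ref{M2full} limits the finite intersections to one; in both situations the total is at most two.

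The main obstacle is the case $b\ne0$, $c\ne0$: there are no common points at infinity and finite intersections may a priori occur in all three intervals. Lemmas \ref{M2full} and \ref{obviouscalculation} cap the two outer intervals at a combined total of two zeros, so the task is to show that a zero in the middle interval forces the outer intervals to be zero-free, and conversely. I would establish this by comparing the value ranges of $C$ and $D$ on the middle interval with their limits at the vertical asymptotes $x=0$ and $x=-b$ and with $c$ — using Definition \ref{stronglyhyperbolic}(3) for the behaviour at $+\infty$ — to fix the signs of $\widecheck{f}$ and $\widehat{f}$ at the endpoints of their intervals, and hence their zero counts. Coordinating the three interval counts so that their sum never exceeds two, together with the contributions of the points at infinity, is where the real work lies; everything else is a direct consequence of the strict convexity of $f_1,f_2$ and the lemmas of the previous section.
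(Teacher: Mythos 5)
Your reduction via $\Phi_\infty$ to $C=\overline{f_{1,0,0}}$, the line--line and line--circle cases, and the subcases $b=c=0$ and ``exactly one of $b,c$ zero'' are all sound; indeed the paper merely asserts the cases involving a line $\overline{l_{s,t}}$, so your convexity argument for $f_1(x)-sx-t$ fills in something the paper leaves implicit. (One small slip: on the middle interval the difference of the two branches is strictly convex tending to $+\infty$ at both ends when $b<0$, and strictly concave tending to $-\infty$ when $b>0$; you state only the latter. Harmless, since either way there are at most two zeros.)

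The genuine gap is exactly where you locate it, and locating it is not the same as closing it. For $b\ne 0$, $c\ne 0$ you have three intervals, with Lemma \ref{obviouscalculation} capping the two outer ones at a combined two zeros and convexity capping the middle one at two, so a priori the total can be four; the entire content of the theorem in this case is the assertion that middle count plus outer count never exceeds two, and for that you offer only a programme (``comparing the value ranges \dots is where the real work lies''). The paper takes a different and more economical route: it fixes two intersection points $p,q$ and rules out a third, so that when $p,q$ lie on the convex branches it can invoke case (1) of Lemma \ref{obviouscalculation} to exclude all concave--concave intersections, and then disposes of the mixed convex--concave intersections by a branch-position argument. To complete your version you would need the interval-language analogue: a sign analysis of the mixed difference $af_1(x+b)+c+f_2(-x)$ (or its mirror) against the parameter constraints appearing in Lemma \ref{T1meat}, showing that zeros in the middle interval are incompatible with the outer intervals carrying two zeros, and vice versa. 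That analysis is absent, so the proposal proves the easy cases and only outlines the one that matters.
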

\begin{proof} We note that the theorem holds if at least one of $C$ or $D$ has the form $\overline{l_{s,t}}$.  In the remainder of the proof, let $C=\overline{f_{a_1,b_1,c_1}}$ and $D=\overline{f_{a_2,b_2,c_2}}$, where  $a_1,a_2>0, b_1,b_2,c_1,c_2 \in \mathbb{R}, (a_1,b_1,c_1) \ne (a_2,b_2,c_2)$.
We assume that $C$ and $D$ have two intersections $p,q$ and show that they can have no other intersection. There are three cases depending on the coordinates of $p$.

\begin{enumerate} [label=Case \arabic*:,leftmargin=0pt,itemindent=*] 
	
	\item $p =(b,\infty),b\in \mathbb{R}$. Then $b_1=b_2=-b$.   If $q= (x_q,y_q) \in \mathbb{R}^2$, then the proof follows from Lemma \ref{M2full}. 
	Otherwise, $q= (\infty,c)$ so that $c_1=c_2=c$. In this case, $a_1 \ne a_2$ and the proof follows also from Lemma \ref{M2full}.

	\item $p =(\infty, c), c\in \mathbb{R}$. This case can be treated similarly to the previous case. 
	
	\item $p=(x_p,y_p) \in \mathbb{R}^2$. By symmetry, we may also assume that $q=(x_q,y_q) \in \mathbb{R}^2$. It is sufficient to show that if $p,q$ are on convex branches of $C$ and $D$, then $C$ and $D$ have no other intersections.   Comparing the cases in  Lemmas \ref{M2full} and \ref{obviouscalculation}, we have $b_1 \ne b_2$ and $c_1 \ne c_2$, which show that $C$ and $D$ have no intersections at  infinity. Also, the equation 
	$$a_1f_1(x+b_1)+c_1=a_2f_1(x+b_2)+c_2$$
	has two solutions $x_p,x_q$.   By Lemma \ref{obviouscalculation}, the equation 
	$$ -a_1f_2(-x-b_1)+c_1= -a_2f_2(-x-b_2)+c_2$$ 
	has no solution. This implies the concave branch of $C$ does not intersect the concave branch of $D$.
	We note that the convex branch of $C$ cannot intersect the concave branch of $D$ and vice versa. This completes the proof. \qedhere	 
	\end{enumerate}
\end{proof} 

\subsection{Axiom of Touching, existence}	

We say that two distinct elements $C,D$ of $\mathcal{C}^{-}(f_1,f_2)$ \textit{touch}  at $p$ if  $C \cap D=\{p\}$. As a preparation for the proof of the main theorem of this subsection, we have the following. 
\begin{lemma} \label{tangentimpliestouch}Let $a_1,a_2>0, b_1,b_2,c_1,c_2 \in \mathbb{R}, (a_1,b_1,c_1) \ne (a_2,b_2,c_2)$. If one of the following conditions holds, then $D_1=\overline{f_{a_1,b_1,c_1}}$ and $D_2=\overline{f_{a_2,b_2,c_2}}$  touch at a point $p$.
	
	\begin{enumerate}
		
		\item $p=(b,\infty)$, $a_1=a_2$, $b_1=b_2=-b$ and $c_1 \ne c_2$. 
		
		\item $p=(\infty,c)$, $a_1=a_2$, $b_1 \ne b_2$, and $c_1=c_2=c$.
		
		\item  $p = (x_p,y_p) \in \mathbb{R}^2$, $f_{a_1,b_1,c_1}(x_p)=f_{a_2,b_2,c_2}(x_p)=y_p$ and $f'_{a_1,b_1,c_1}(x_p)=f'_{a_2,b_2,c_2}(x_p)$.
	\end{enumerate}
	
	\begin{proof}  \begin{enumerate}[leftmargin=0pt,itemindent=*]
			\item Assume that the first condition holds.  Note that $p \in D_1 \cap D_2$.   By Lemma \ref{M2full}, $D_1$ and $D_2$ have no intersection on their convex and concave branches. They cannot have an intersection of the form $(\infty,c)$ either, since $c_1 \ne c_2$. Therefore, $p$ is the only common point of $D_1$ and $D_2$.  
			A similar conclusion can be made for the second condition. 
			
			\item Assume that the third condition holds. Since $f_{a_1,b_1,c_1}(x_p)=f_{a_2,b_2,c_2}(x_p)=y_p$, the point $p$ is an intersection of $D_1$ and $D_2$.    If $p$ is on  the convex branch of $D_1$ and the concave branch of $D_2$, then it is easy to check that  $D_1$ and $D_2$ have no other intersection.
			Consider the case $p$ is on the convex branches of $D_1$ and $D_2$. Then the function $\widecheck{f}: (\max \{ -b_1,-b_2\},+\infty) \rightarrow \mathbb{R}$ defined by
			$$\widecheck{f}(x)=a_1f_1(x+b_1)+c_1-a_2f_1(x+b_2)-c_2$$ 
			has at least one root $x_p$ such that $\widecheck{f}'(x_p)=0$. 
			
			If $b_1 =b_2, c_1\ne c_2$ or $b_1\ne b_2, c_1=c_2$, then  from Lemma \ref{M2full} we obtain a contradiction. Hence  $b_1 \ne b_2, c_1 \ne c_2$. By Lemma \ref{obviouscalculation},  $p$ is the only finite intersection of $D_1$ and $D_2$. 
			Also,  $D_1$ and $D_2$ have no intersection at infinity, and so $p$ is the only common point of $D_1$ and $D_2$. This completes the proof. \qedhere
		\end{enumerate}
	\end{proof}
	
\end{lemma}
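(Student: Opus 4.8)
The plan is to treat the three listed configurations one at a time. In each of them the asserted common point $p$ is visibly incident with both $D_1$ and $D_2$, so the real work is to show there is no \emph{second} common point; and the two parts of that — excluding extra finite intersections and excluding extra intersections at infinity — are precisely what Lemmas \ref{M2full} and \ref{obviouscalculation} were built to handle. So the proof is mostly a matter of checking which of those lemmas' alternatives is forced and keeping track of which of the points $(b,\infty)$, $(\infty,c)$ the two curves can share. Throughout I would use that two distinct circles meet in at most two points (Theorem \ref{uniquenessjoining}).

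For the first configuration, $b_1=b_2=-b$ puts $(b,\infty)=p$ on both $D_1$ and $D_2$; since moreover $a_1=a_2$, the functions $\widecheck f$ and $\widehat f$ attached to the pair $(D_1,D_2)$ are the nonzero constants $c_1-c_2$, so by Lemma \ref{M2full}(1) neither has a root and hence the convex branches of $D_1,D_2$ are disjoint, as are the concave branches; and $D_1,D_2$ cannot share a point $(\infty,c)$, as that would force $c_1=c_2$. Thus $D_1\cap D_2=\{p\}$. The second configuration is the mirror image: $c_1=c_2=c$ makes $(\infty,c)=p$ common, $a_1=a_2$ again kills all roots of $\widecheck f,\widehat f$ by Lemma \ref{M2full}(1), and $b_1\neq b_2$ prevents a shared point $(b,\infty)$.

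For the third configuration, $f_{a_1,b_1,c_1}(x_p)=f_{a_2,b_2,c_2}(x_p)=y_p$ already gives $p\in D_1\cap D_2$, and I would split according to whether $p$ lies on branches of the same type or of opposite types for the two curves. In the same-type case — say both convex, the both-concave case being symmetric — one has $x_p>-b_1$ and $x_p>-b_2$, so $x_p$ is a root of $\widecheck f$, and equality of the derivatives at $x_p$ (each of which is $a_if_1'(x_p+b_i)$ on a convex branch) gives $\widecheck f'(x_p)=0$. If $b_1=b_2$ or $c_1=c_2$ with the other difference nonzero (the case $b_1=b_2,c_1=c_2$ being impossible, since then $a_1\neq a_2$ would make $\widecheck f$ rootless), Lemma \ref{M2full} says $\widecheck f$ is either rootless or has a single root with nonzero derivative — contradicting $\widecheck f'(x_p)=0$. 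Hence $b_1\neq b_2$ and $c_1\neq c_2$, and Lemma \ref{obviouscalculation} applies: of its three alternatives only the middle one (a single root of $\widecheck f$, at which $\widecheck f$ does not change sign and $\widecheck f'$ vanishes, with $\widehat f$ having no root) is compatible with $\widecheck f'(x_p)=0$. So $p$ is the only common finite point, and since $b_1\neq b_2$, $c_1\neq c_2$ there is no common point at infinity either. In the opposite-type case, say $p$ on the convex branch of $D_1$ and the concave branch of $D_2$, one checks directly that $b_1\neq b_2$ and (from $y_p>c_1$, $y_p<c_2$) $c_1\neq c_2$ are automatic, so there is again no intersection at infinity; on the branch-overlap containing $x_p$ the difference of the two branches is strictly convex and tends to $+\infty$ at both endpoints, hence vanishes only at its critical point $x_p$; and the remaining branch-overlaps carry no further intersection — this is where I would invoke Lemma \ref{obviouscalculation} together with Theorem \ref{uniquenessjoining} to rule out an extra root of $\widecheck f$ or $\widehat f$.

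The delicate part is this last configuration: one must see that the tangency $\widecheck f'(x_p)=0$ genuinely singles out the one alternative of Lemmas \ref{M2full}/\ref{obviouscalculation} in which the unique finite intersection does not split into two, and one must be careful to account for \emph{every} branch-overlap (there can be three) and both points at infinity before concluding $D_1\cap D_2=\{p\}$. The first two configurations, and the reduction in the opposite-type sub-case, are comparatively routine once the role of the equality $a_1=a_2$ and of the $y$-ranges of the branches (which are bounded above or below by the relevant $c_i$) has been noted.
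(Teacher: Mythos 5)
Your treatment of conditions 1 and 2, and of the same-branch subcase of condition 3, follows the paper's argument exactly and is correct (including the explicit remark that $b_1=b_2$, $c_1=c_2$ is impossible, which the paper leaves implicit). The problem is the opposite-branch subcase of condition 3, precisely the part the paper disposes of with ``it is easy to check''. There your plan for the remaining branch-overlaps --- ``invoke Lemma \ref{obviouscalculation} together with Theorem \ref{uniquenessjoining} to rule out an extra root of $\widecheck f$ or $\widehat f$'' --- does not close the case. Theorem \ref{uniquenessjoining} only forbids a \emph{third} common point, so it eliminates alternatives 1 and 3 of Lemma \ref{obviouscalculation} (which would give $p$ plus two further intersections), but it cannot eliminate alternative 2: a single root of $\widecheck f$ (or, symmetrically, of $\widehat f$) at which there is no sign change. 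That alternative would give exactly two common points, one of them the tangency at $p$, which is perfectly compatible with ``at most two intersections''. So as written the argument has a genuine gap at exactly the configuration you yourself flag as delicate.

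The gap is easy to fill, and with the observation you already half-made about the $y$-ranges you do not need Lemma \ref{obviouscalculation} or Theorem \ref{uniquenessjoining} here at all. Normalise $p=(0,0)$; then $p$ on the convex branch of $D_1$ and the concave branch of $D_2$ gives $-b_1<0<-b_2$, $c_1=-a_1f_1(b_1)<0$ and $c_2=a_2f_2(-b_2)>0$. On the convex--convex overlap $x>-b_2>0$ the convex branch of $D_1$ is strictly decreasing through the value $0$ at $x=0$, hence takes values $<0$ there, while the convex branch of $D_2$ takes values $>c_2>0$; so $\widecheck f$ has no root. Dually, on the concave--concave overlap $x<-b_1<0$ the concave branch of $D_2$ takes values $>0$ and the concave branch of $D_1$ takes values $<c_1<0$; so $\widehat f$ has no root. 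The convex branch of $D_2$ and the concave branch of $D_1$ have disjoint domains, and $b_1\ne b_2$, $c_1\ne c_2$ rule out common points at infinity, so together with your convexity argument on the overlap containing $x_p$ (or the common-tangent-line argument) this yields $D_1\cap D_2=\{p\}$.
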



\begin{lemma} \label{touchatinfinity1} Let $C=\overline{f_{a_0,b_0,c_0}}$ and  $p= ( -b_0, \infty)$. Let $q$ be a point such that $q \not \in C, q \not \parallel p$. Then there exist $a_1>0,b_1,c_1 \in \mathbb{R}$ such that $D=\overline{f_{a_1,b_1,c_1}}$  contains $p,q$ and touches $C$ at $p$.
	\begin{proof} Let $a_1=a_0, b_1=b_0$. Depending on the position of $q$, we let  $c_1$ be described as follows. 
		
		\begin{enumerate} [label=Case \arabic*:,leftmargin=0pt,itemindent=*] 
			\item $q=(\infty,y_q)$. 
			  Let $ c_1=y_q$. 
			\item $q=(x_q,y_q), x_q>-b_1$. Let  $c_1=y_q-a_1f_1(x_q+b_1)$. 
				\item $q=(x_q,y_q), x_q<-b_1$. Let  $c_1=y_q+a_1f_2(-x_q-b_1)$.  
		\end{enumerate}
		
		We note that the condition $q \not \in C$ implies that $c_1 \ne c_0$ in each of the cases above. Let $D=\overline{f_{a_1,b_1,c_1}}$. Then $D$ contains $p,q$, and by Lemma \ref{tangentimpliestouch}, touches $C$ at $p$. \qedhere
		
	\end{proof}
\end{lemma}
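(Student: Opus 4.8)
The plan is to reduce everything to Lemma~\ref{tangentimpliestouch}(1), which tells us that two sets $\overline{f_{a,b,c}}$ and $\overline{f_{a,b,c'}}$ sharing the same parameters $a$ and $b$ but with $c \neq c'$ automatically touch at their common point $(-b,\infty)$. Accordingly, I would fix $a_1 = a_0$ and $b_1 = b_0$ once and for all and keep $c_1$ as the only free parameter. With these choices, $D = \overline{f_{a_0,b_0,c_1}}$ always contains $p = (-b_0,\infty)$, and $D$ touches $C$ at $p$ as soon as we know $c_1 \neq c_0$; so the problem collapses to: choose $c_1$ so that $q \in D$, then verify that this choice forces $c_1 \neq c_0$.

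First I would unpack the hypothesis $q \not\parallel p$. Since $p = (-b_0,\infty)$ lies on the vertical $\{-b_0\}\times\mathbb{S}^1$ and the horizontal $\mathbb{S}^1\times\{\infty\}$, the point $q$ must have first coordinate different from $-b_0$ and second coordinate different from $\infty$. Hence exactly one of three cases occurs: $q = (\infty, y_q)$ with $y_q \in \mathbb{R}$; or $q = (x_q,y_q) \in \mathbb{R}^2$ with $x_q > -b_0$, in which case $q$ would lie on the convex branch of $D$; or $q = (x_q,y_q) \in \mathbb{R}^2$ with $x_q < -b_0$, in which case $q$ would lie on the concave branch of $D$. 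This is precisely the case split that makes the defining formula of $f_{a_0,b_0,c_1}$ usable.

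In each case, the requirement ``$q \in D$'' is a single linear equation for $c_1$ with a unique solution, read off from the definition of $f_{a_0,b_0,c_1}$: namely $c_1 = y_q$ (matching $q$ with the point $(\infty,c_1)$ of $D$), or $c_1 = y_q - a_0 f_1(x_q + b_0)$, or $c_1 = y_q + a_0 f_2(-x_q - b_0)$, respectively. I would then note that ``$q \in C$'' is described by the very same equation with $c_0$ in place of $c_1$; therefore the hypothesis $q \notin C$ is exactly the statement $c_1 \neq c_0$. With $a_1 = a_0$, $b_1 = b_0$, $c_1 \neq c_0$ and $q \in D \ni p$, Lemma~\ref{tangentimpliestouch}(1) applies and gives that $D$ touches $C$ at $p$, completing the proof.

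I do not expect any genuine obstacle here: the construction is essentially forced, and all of the nontrivial geometric content — that equality of the parameters $a$ and $b$ together with $c_1 \neq c_0$ genuinely produces a \emph{single} common point, rather than a second intersection on one of the branches or at $(\infty,\cdot)$ — has already been isolated in Lemma~\ref{tangentimpliestouch}. The only care required is the routine bookkeeping in the three-case analysis and the observation that $q \notin C$ is equivalent to $c_1 \neq c_0$.
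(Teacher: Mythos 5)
Your proposal is correct and follows essentially the same route as the paper's proof: fix $a_1=a_0$, $b_1=b_0$, solve the single linear equation for $c_1$ in each of the same three cases for the position of $q$, observe that $q\notin C$ forces $c_1\ne c_0$, and invoke Lemma \ref{tangentimpliestouch}(1). The only difference is that you spell out why $q\not\parallel p$ yields exactly those three cases, which the paper leaves implicit.
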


\begin{lemma} \label{touchatinfinity2} Let  $C=\overline{f_{a_0,b_0,c_0}}$ and $p= ( \infty, c_0)$. Let $q$ be a point such that $q \not \in C, q \not \parallel p$. Then there exist $a_1>0,b_1,c_1 \in \mathbb{R}$ such that $D=\overline{f_{a_1,b_1,c_1}}$ contains $p,q$ and touches $C$ at $p$.
	\begin{proof} Let $a_1=a_0, c_1=c_0$. There are three cases depending on the position of $q$.
		
		\begin{enumerate} [label=Case \arabic*:,leftmargin=0pt,itemindent=*] 
			
			\item $q= (x_q,\infty)$.  
			Let $ b_1=x_q$. We note that $b_1= x_q \ne b_0$, since   $q \not \in C$.
			\item $q=(x_q,y_q) \in \mathbb{R}^2 , y_q>c_1$. Since $f_1$ is surjective on $\mathbb{R}^+$, there exists $b_1 \in (-x_q,\infty)$ such that $a_1f_1(x_q+b_1)+c_1=y_q$. Since $q \not \in C$, it follows that $y_q \ne  a_0f_1(x_q+b_0)+c_0$, 
			which implies $f_1(x_q+b_1) \ne f_1(x_q+b_0)$.
			In particular, $b_1 \ne b_0$ since $f_1$ is injective  on $\mathbb{R}^+$.
			\item $q=(x_q,y_q), y_q<c_1$. This is similar to Case 2. 
		\end{enumerate}
		Let $D=\overline{f_{a_1,b_1,c_1}}$. Then $D$ contains $p,q$, and by Lemma \ref{tangentimpliestouch}, touches $C$ at $p$. \qedhere
	\end{proof}
	
\end{lemma}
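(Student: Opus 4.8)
The plan is to lean on Lemma~\ref{tangentimpliestouch}(2), which says that two curves $\overline{f_{a_0,b_0,c_0}}$ and $\overline{f_{a_1,b_1,c_1}}$ touch at $(\infty,c)$ whenever $a_0=a_1$, $c_0=c_1=c$ and $b_0\ne b_1$. Since $p=(\infty,c_0)$ lies on $\overline{f_{a,b,c}}$ exactly when $c=c_0$, I would fix $a_1=a_0$ and $c_1=c_0$ at the outset. The problem then reduces to producing a single parameter $b_1\ne b_0$ for which $q$ lies on $D=\overline{f_{a_0,b_1,c_0}}$.

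To do this I would first use $q\not\parallel p$ to write $q=(x_q,y_q)$ with $x_q\in\mathbb{R}$ and $y_q\in\mathbb{S}^1\setminus\{c_0\}$, and then split into the cases $y_q=\infty$, $y_q>c_0$ and $y_q<c_0$; the sign of $y_q-c_0$ already forces $q$ onto the convex branch (ordinate $>c_0$) or the concave branch (ordinate $<c_0$) of $D$. If $y_q=\infty$, the only point of $D$ with second coordinate $\infty$ is $(-b_1,\infty)$, so I must take $-b_1=x_q$. If $y_q>c_0$, I would solve $a_0f_1(x_q+b_1)+c_0=y_q$, that is $f_1(x_q+b_1)=(y_q-c_0)/a_0>0$; since $f_1$ is a strictly decreasing homeomorphism of $\mathbb{R}^+$ onto itself (Definition~\ref{stronglyhyperbolic}(1)), there is a unique $u>0$ with $f_1(u)=(y_q-c_0)/a_0$, and $b_1:=u-x_q$ gives $x_q+b_1=u>0$, so $q$ indeed sits on the convex branch of $D$. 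The case $y_q<c_0$ is identical with $f_2$ and the concave branch.

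It remains to check that the $b_1$ thus obtained differs from $b_0$; this is the sole use of the hypothesis $q\notin C$. When $q$ lies over the domain of the branch of $C$ that matches its ordinate, $q\notin C$ makes the value of that branch at $x_q$ differ from $y_q$, and injectivity of $f_1$ (or $f_2$) then gives $b_1\ne b_0$; in the remaining positions $q\notin C$ is automatic, and there the inequality the constructed $b_1$ satisfies ($x_q+b_1>0$, respectively $x_q+b_1<0$) is incompatible with the one $b_0$ satisfies, again forcing $b_1\ne b_0$. Once $a_1=a_0$, $c_1=c_0$, $b_1\ne b_0$ and $p,q\in D$ are in place, Lemma~\ref{tangentimpliestouch}(2) shows that $D$ touches $C$ at $p$. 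I do not expect a real obstacle here: the only substantive ingredient is that strongly hyperbolic functions are strictly monotone bijections of $\mathbb{R}^+$ (immediate from Definition~\ref{stronglyhyperbolic}), and everything else is the bookkeeping that keeps $q$ on the intended branch and separates $b_1$ from $b_0$.
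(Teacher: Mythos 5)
Your proposal is correct and follows essentially the same route as the paper: fix $a_1=a_0$, $c_1=c_0$, split on whether $y_q$ is $\infty$, greater than $c_0$, or less than $c_0$, solve for $b_1$ via surjectivity of $f_1$ (resp.\ $f_2$) on $\mathbb{R}^+$, check $b_1\ne b_0$ from $q\notin C$, and invoke Lemma \ref{tangentimpliestouch}. You are in fact slightly more careful than the paper in the case $y_q=\infty$, where the correct choice is $b_1=-x_q$ (the paper's ``$b_1=x_q$'' is a sign slip), and in spelling out why $b_1\ne b_0$ when $q$ lies over the opposite branch of $C$.
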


\begin{lemma} \label{constructivetouch} Let $p=(x_p,y_p) \in \mathbb{R}^2$ and $s<0$. Let $q$ be a point such that  $q \not \parallel p$. Then exactly one of the following is true. 
	\begin{enumerate}
		\item There exists $t \in \mathbb{R}$ such that $D=\overline{l_{s,t}}$ contains $p$ and $q$.
		\item There exist $a_1>0,b_1,c_1 \in \mathbb{R}$ such that $D=\overline{f_{a_1,b_1,c_1}}$ contains $p,q$  and $f'_{a_1,b_1,c_1}(x_p)=s.$
	\end{enumerate}
	
	\begin{proof} We can assume $p=(0,0)$. If $q$ is on the line $y=sx$ or $q=(\infty,\infty)$, then  $D= \overline{l_{s,0}}$ contains $p$ and $q$. 
	We now consider other cases. 
		
		\begin{enumerate} [label=Case \arabic*:,leftmargin=0pt,itemindent=*] 		
%
			\item $q= (x_q,\infty)$, $x_q \ne 0$. Let $b_1=-x_q$. If $x_q<0$, we consider the system
			\begin{equation*} 
			\begin{cases}
			0= af_1(b_1)+c\\
			s=af_1'(b_1)
			\end{cases}
			\end{equation*}
			in variables $a,c$. The solution is $a_1= \dfrac{s}{f_1'(b_1)}, c_1=-a_1f_1(b_1)$. Then $D=\overline{f_{a_1,b_1,c_1}}$ contains $p,q$  and satisfies $f'_{a_1,b_1,c_1}(x_p)=s.$ The case $x_q>0$ is similar. 
			
			
			\item $q= (\infty,y_q), y_q \ne 0$. Let $c_1=y_q$.  If $y_q<0$, then we consider the system
			\begin{equation} \label{etoucheqn-1}
			\begin{cases}
			0= af_1(b)+c_1\\
			s=af_1'(b)
			\end{cases}
			\end{equation}
			in variables $a,b$. 
			Eliminating $a$, we obtain $
			-s/c_1=  f_1'(b)/f_1(b). $
			Let $g: (0,+\infty) \rightarrow \mathbb{R}$ be defined by
			$$
			g(b)= \dfrac{f_1'(b)}{f_1(b)}.
			$$
			Since $f_1$ is continuously differentiable, $g$ is continuous. By Lemma \ref{stronglyhyperboliclimits}, $\lim\limits_{b \rightarrow +\infty} g(b)= 0$ and  $g$ is unbounded below.  
			As $-s/c_1 <0$, by the IVT,  (\ref{etoucheqn-1}) has a solution. Then $D=\overline{f_{a_1,b_1,c_1}}$ contains $p,q$ with $p$ on its convex branch, and $f'_{a_1,b_1,c_1}(x_p)=s$. 
			Similarly, when $y_q>0$,   $D =\overline{f_{a_1,b_1,c_1}}$ contains $p,q$ with $p$ on its concave branch, and $f'_{a_1,b_1,c_1}(x_p)=s$.
%
			\item $q = (x_q,y_q) \in \mathbb{R}^2$, where $x_q>0, sx_q<y_q<0$, or  $x_q<0, 0<sx_q<y_q$. We claim that there exists $\overline{f_{a_1,b_1,c_1}}$ containing $p,q$ on its convex branch and  $f'_{a_1,b_1,c_1}(x_p)=s$, that is, the system 
			$$ 
			\begin{cases}
			0= af_1(b)+c\\
			y_q= af_1(x_q+b)+c\\
			s=af_1'(b)
			\end{cases}
			$$
			has a solution $a_1,b_1,c_1$. From the first equation, we get $c=-af_1(b)$. Substituting this into the remaining equations and rearranging, we have
			\begin{equation} \label{etoucheqn1}
			a=\dfrac{y_q}{f_1(x_q+b)-f_1(b)}=\dfrac{s}{f_1'(b)}.
			\end{equation}

			Let $g: (\max \{-x_q,0\}, +\infty) \rightarrow \mathbb{R}$ be defined by
			$$
			g(b)= \dfrac{f_1(x_q+b)-f_1(b)}{f_1'(b)}. 
			$$
			We consider the case $x_q>0$,$ sx_q<y_q<0$. Note that $g(b)$ is continuous and $\lim\limits_{b \rightarrow 0+} g(b)=0$. 
			By Lemma \ref{stronglyhyperboliclimits},
			$
			\lim\limits_{b \rightarrow +\infty} g(b) =  x_q.
			$	
			By the IVT, there exists $b_1$ such that  $g(b_1) = \dfrac{y_q}{s} \in (0,x_q)$. This shows that (\ref{etoucheqn1}) has a solution.
			The case $x_q<0, 0<sx_q<y_q$ is similar. The claim follows.
			
			\item $q = (x_q,y_q) \in \mathbb{R}^2$, where $x_q<0,y_q<0$.
			 It can be shown that there exists $\overline{f_{a_1,b_1,c_1}}$ containing $p$ on its convex branch, $q$ on its concave branch, and $f'_{a_1,b_1,c_1}(x_p)=s$.
			
			\item  $q = (x_q,y_q) \in \mathbb{R}^2$, where $0<x_q,y_q<s x_q<0$, or  $x_q<0, 0<y_q<sx_q$. It can be shown that there  exists $\overline{f_{a_1,b_1,c_1}}$ containing $p,q$ on its on concave branch  and $f'_{a_1,b_1,c_1}(x_p)=s$.  
			
			\item 	 $q = (x_q,y_q) \in \mathbb{R}^2$, where  $x_q>0,y_q>0$. It can be shown that there exists $\overline{f_{a_1,b_1,c_1}}$ containing $p$ on its concave branch and $q$ on its convex branch,  and $f'_{a_1,b_1,c_1}(x_p)=s$.  
			\qedhere
		\end{enumerate}
		
	\end{proof}
\end{lemma}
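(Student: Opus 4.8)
The plan is to build a circle of $\mathcal{C}^{-}(f_1,f_2)$ through $p$ and $q$ whose tangent slope at $p$ is the prescribed $s$, and to read off from the construction whether it is forced to be a line $\overline{l_{s,t}}$ or a genuine graph $\overline{f_{a_1,b_1,c_1}}$. First I would reduce to $p=(0,0)$: the translation $(x,y)\mapsto(x-x_p,\,y-y_p)$ maps $F$ bijectively onto $F$, maps $L$ bijectively onto $L$, and turns the condition $f'_{a,b,c}(x_p)=s$ into $f'_{a,b,c}(0)=s$, so there is no loss of generality.

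With $p=(0,0)$ I would split on whether $q$ lies on the line $y=sx$ (including the point $(\infty,\infty)$). If it does, then $\overline{l_{s,0}}\in L$ — legitimate since $s<0$ — passes through $p$ and $q$, which gives the first alternative; and the second alternative must then fail, because a circle $\overline{f_{a_1,b_1,c_1}}$ through the origin with derivative $s$ there has $y=sx$ as the tangent at the origin to whichever of its two branches passes through it, and strict convexity of $f_1$ (respectively strict concavity of $x\mapsto-a_1f_2(-x-b_1)+c_1$) keeps that branch strictly on one side of $y=sx$ away from the origin, while the other branch together with the two points $(-b_1,\infty),(\infty,c_1)$ misses $y=sx$ entirely and $(\infty,\infty)$ lies on no member of $F$. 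Conversely, if $q$ does not lie on $y=sx$, then $\overline{l_{s,0}}$ is the only candidate line through $p$ with slope $s$ and it misses $q$, so the first alternative fails and it remains to construct the circle of the second alternative.

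For that construction I would distribute $q$ into cases: $q=(x_q,\infty)$ with $x_q\neq0$; $q=(\infty,y_q)$ with $y_q\neq0$; and $q=(x_q,y_q)\in\mathbb{R}^2$, where the signs of $x_q$ and $y_q$ and the side of $y=sx$ on which $q$ lies determine which of the convex and concave branches of the sought circle must contain $p$ and which must contain $q$. In the case $q=(x_q,\infty)$ the value $b_1=-x_q$ is forced, and the two remaining requirements (the circle passes through $p$; its derivative at $0$ equals $s$) are linear in $a_1,c_1$ and are solved outright. In each remaining case I would eliminate $a_1$ (and also $c_1$ when one of $p,q$ lies at infinity), reducing the three conditions to a single scalar equation $g(b_1)=\kappa$ in which $g$ is a continuous ratio such as $f_1'(b)/f_1(b)$ or $\bigl(f_1(x_q+b)-f_1(b)\bigr)/f_1'(b)$; Lemma \ref{stronglyhyperboliclimits}, together with Definition \ref{stronglyhyperbolic}, identifies the one-sided limits of $g$ at the two ends of its interval of definition, and the hypotheses on the coordinates of $q$ are precisely what place $\kappa$ strictly between those limits, so the IVT supplies a root $b_1$. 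One then reads off $a_1$ and $c_1$ and verifies that the resulting $\overline{f_{a_1,b_1,c_1}}$ really does meet $p$ and $q$ on the asserted branches.

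The main obstacle I expect is not depth but the case analysis for the second alternative: one must check that the listed positions of $q$ exhaust all cases compatible with $q\not\parallel p$ (with no spurious overlap), and, within each instance of the IVT argument, that the one-sided limits of the auxiliary function really do straddle the target value $\kappa$ — this is exactly where the precise inequalities among $x_q$, $y_q$ and $sx_q$ are consumed and where parts (2), (3) and (5) of Lemma \ref{stronglyhyperboliclimits} do the work. Threading the sign constraints $a_1>0$, $s<0$ and the positivity and monotonicity of the $f_i$ consistently, so that the circle produced sits on the correct branches, is the remaining point that needs care.
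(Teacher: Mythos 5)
Your proposal follows essentially the same route as the paper: reduce to $p=(0,0)$, dispose of the case $q$ on $y=sx$ (or $q=(\infty,\infty)$) with $\overline{l_{s,0}}$, and otherwise case-split on the position of $q$, forcing $b_1$ or $c_1$ when a coordinate of $q$ is infinite and otherwise eliminating $a$ and $c$ to get a single equation in $b$ solved by the IVT using exactly the auxiliary ratios $f_1'(b)/f_1(b)$ and $\bigl(f_1(x_q+b)-f_1(b)\bigr)/f_1'(b)$ and the limits from Lemma \ref{stronglyhyperboliclimits}. Your added remark that the two alternatives are mutually exclusive (via strict convexity keeping the relevant branch off the tangent line) is a point the paper's proof leaves implicit, but the substance of the argument is the same.
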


\begin{theorem}[Axiom of Touching, existence]  \label{existencetouching} Given $C \in \mathcal{C}^{-}(f_1,f_2)$ and two points $p \in C, q  \not \in C$ such that $q \not \parallel p$, there exists $D \in \mathcal{C}^{-}(f_1,f_2)$ that contains $p,q$ and touches $C$ at $p$.
	
	\begin{proof}  There are four cases depending on $p$ and $C$. 
		\begin{enumerate} [label=Case \arabic*:,leftmargin=0pt,itemindent=*] 
			
			\item $ p= (\infty,\infty)$, $C= \overline{l_{s_0,t_0}}$. Then $q \in \mathbb{R}^2$. Let  $D$ be the Euclidean line goes through $q$ and parallel to $C$. Then $D$ satisfies the requirements.

			\item  $ p = (-b_0,\infty)$, $C=\overline{f_{a_0,b_0,c_0}}$. The choice of $D$ follows from Lemma \ref{touchatinfinity1}.
			
			\item  $p = (\infty,c_0)$, $C=\overline{f_{a_0,b_0,c_0}}$.  The choice of $D$ follows from Lemma \ref{touchatinfinity2}.
			\item $p= (x_p,y_p) \in \mathbb{R}^2$. Let $s$ be the slope of the tangent of $C$ at $p$. By Lemma \ref{constructivetouch}, there exist $a_1>0,b_1,c_1 \in \mathbb{R}$ such that $D=\overline{f_{a_1,b_1,c_1}}$ contains $p,q$  and satisfies $f'_{a_1,b_1,c_1}(x_p)=s.$
			Then by Lemma \ref{tangentimpliestouch}, $D$ touches $C$ at $p$. \qedhere
			
		\end{enumerate}
		
	\end{proof}
	
\end{theorem}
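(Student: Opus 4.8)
The plan is to argue by cases on the location of the fixed point $p$ on $C$. A point of $\mathcal{P}$ lying on some member of $\mathcal{C}^{-}(f_1,f_2)$ is of exactly one of three kinds: the point $(\infty,\infty)$, which lies only on the lines $\overline{l_{s,t}}$; a point $(-b_0,\infty)$ or $(\infty,c_0)$, which lies only on graphs $\overline{f_{a_0,b_0,c_0}}$; or a point of $\mathbb{R}^2$. Accordingly $C$ is a line when $p=(\infty,\infty)$ or $p\in\mathbb{R}^2$, and a graph otherwise. In each case I will exhibit a concrete candidate $D$ passing through both $p$ and $q$ and sharing the first-order behaviour of $C$ at $p$, and then invoke Lemma \ref{tangentimpliestouch} (or, in the line sub-cases, a short direct argument) to pass from ``agrees with $C$ at $p$'' to ``$C\cap D=\{p\}$''.

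For $p=(\infty,\infty)$ we have $C=\overline{l_{s_0,t_0}}$, and $q\not\parallel p$ forces $q\in\mathbb{R}^2$; take $D=\overline{l_{s_0,t_1}}$, the Euclidean line of slope $s_0$ through $q$, so that $t_1\ne t_0$ since $q\notin C$, and $C,D$ are distinct parallels meeting only at $(\infty,\infty)=p$. For $p=(-b_0,\infty)$ or $p=(\infty,c_0)$ with $C=\overline{f_{a_0,b_0,c_0}}$, Lemmas \ref{touchatinfinity1} and \ref{touchatinfinity2} already furnish the required $D$: keeping $a_1=a_0$ and one of $b_0,c_0$ and solving for the remaining parameter so that $q\in D$, parts (1)--(2) of Lemma \ref{tangentimpliestouch} give $C\cap D=\{p\}$.

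The finite case $p=(x_p,y_p)\in\mathbb{R}^2$ is the substantive one. Let $s$ be the slope of the tangent of $C$ at $p$ --- equal to $s_0$ if $C=\overline{l_{s_0,t_0}}$ and to $f'_{a_0,b_0,c_0}(x_p)$ if $C=\overline{f_{a_0,b_0,c_0}}$ --- and note $s<0$ because each $f_i$ is strictly decreasing. Apply Lemma \ref{constructivetouch} to $(p,s,q)$: it returns, exclusively, either a line $\overline{l_{s,t}}$ through $p$ and $q$, or a graph $D=\overline{f_{a_1,b_1,c_1}}$ through $p$ and $q$ with $f'_{a_1,b_1,c_1}(x_p)=s$. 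The line alternative cannot occur when $C$ is a line (that line has slope $s_0$ and passes through $p$, so it equals $C$, contradicting $q\notin C$), so when $C$ is a line the graph alternative holds; and when $C$ is a graph the graph alternative lands exactly in Lemma \ref{tangentimpliestouch}(3), which gives $C\cap D=\{p\}$.

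The step I expect to be the main obstacle is closing the remaining gaps of this last case: when $C$ is a graph and the line alternative of Lemma \ref{constructivetouch} occurs (so $D$ is the tangent line to a branch of $C$ at $p$), and, dually, when $C$ is a line and $D$ is the tangent graph, Lemma \ref{tangentimpliestouch}(3) does not literally apply, being stated for two graphs. There one must argue directly: $C$ and $D$ agree to first order at $p$ on one branch, so strict convexity (or concavity) of that branch excludes a second intersection there, and the line's point at infinity $(\infty,\infty)$ differs from the graph's points $(-b,\infty),(\infty,c)$, so there is no intersection at infinity; the delicate point is to exclude a crossing of the straight part with the opposite, concave, branch of the graph --- there the branch runs from the finite height $c$ at $-\infty$ down to $-\infty$ at its vertical asymptote and the relevant difference is strictly convex, so one uses the asymptotics of strongly hyperbolic functions together with the intersection bound of Theorem \ref{uniquenessjoining} to rule out roots. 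The graph-versus-graph sub-case, by contrast, is immediate from Lemmas \ref{tangentimpliestouch}, \ref{M2full} and \ref{obviouscalculation}. Once these points are settled the four cases combine to give the theorem.
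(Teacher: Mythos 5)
Your proposal is correct and follows essentially the same route as the paper: the same four-way case split on the location of $p$, with the parallel-line argument for $p=(\infty,\infty)$, Lemmas \ref{touchatinfinity1} and \ref{touchatinfinity2} for the points at infinity, and Lemma \ref{constructivetouch} combined with Lemma \ref{tangentimpliestouch} for $p\in\mathbb{R}^2$. The only difference is that you explicitly flag the mixed line-versus-graph sub-cases of the finite-point case (where Lemma \ref{tangentimpliestouch} is stated only for two graphs), which the paper passes over in silence; your sketch for closing them is reasonable, though note that convexity of the difference plus the two-intersection bound of Theorem \ref{uniquenessjoining} does not by itself exclude a tangent line meeting the opposite branch in a single (tangential) second point, so that sub-case still needs a little more care.
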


\subsection{Axiom of Touching, uniqueness}	
In this subsection, we prove the following theorem.
\begin{theorem}[Axiom of Touching, uniqueness]   \label{uniquenesstouching} Let $C \in \mathcal{C}^{-}(f_1,f_2)$, $p \in C$, $q \not \in C$, $q \not \parallel p$. Then there is at most one element $D \in \mathcal{C}^{-}(f_1,f_2)$ that contains $p,q$ and touches $C$ at $p$.
\end{theorem}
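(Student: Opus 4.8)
The plan is to reduce the uniqueness of the touching circle $D$ to the statement that, once the contact point $p$ and the prescribed tangent behaviour at $p$ are fixed, the data $(a_1,b_1,c_1)$ of $D$ are uniquely determined by the requirement that $D$ also pass through $q$. Concretely, I would split into cases according to the nature of $p$, mirroring the case analysis of Lemma~\ref{tangentimpliestouch} and Theorem~\ref{existencetouching}. If $C=\overline{l_{s_0,t_0}}$ and $p=(\infty,\infty)$, then any circle through $p$ touching $C$ there must be a line parallel to $C$ (a member of $F$ through $(\infty,\infty)$ is impossible since such circles pass through $(b,\infty)$ and $(\infty,c)$ instead), and there is exactly one line of slope $s_0$ through $q$. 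If $p=(-b_0,\infty)$ with $C=\overline{f_{a_0,b_0,c_0}}$, then $D=\overline{f_{a_1,b_1,c_1}}$ must satisfy $b_1=-b_0=b_0'$... more precisely $b_1 = b_0$; I would argue from Lemmas~\ref{M2full} and~\ref{obviouscalculation} that two circles sharing the point $(b,\infty)$ and touching there are forced to have equal $a$-parameters as well (the "touch at $(b,\infty)$" scenario in Lemma~\ref{tangentimpliestouch}(1) requires $a_1=a_2$), so $D$ is determined up to $c_1$, which is then pinned down by $q\notin C$, $q\in D$. The case $p=(\infty,c_0)$ is symmetric, using part (2) of Lemma~\ref{tangentimpliestouch}.

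The substantive case is $p=(x_p,y_p)\in\mathbb{R}^2$. Here $D$ must be a circle (or line) through $p$ whose tangent slope at $p$ equals $s:=f'_{a_0,b_0,c_0}(x_p)$, the slope of $C$ at $p$. First I would dispense with the possibility $D=\overline{l_{s,t}}$: by the dichotomy in Lemma~\ref{constructivetouch}, for the given $p,q$ exactly one of "a line of slope $s$ through $p,q$" or "an $\overline{f_{a_1,b_1,c_1}}$ through $p,q$ with slope $s$ at $p$" occurs, so at most one of the two shapes can serve as $D$. It then remains to show that within the family $F$ there is at most one $\overline{f_{a_1,b_1,c_1}}$ passing through $p$ and $q$ with the prescribed derivative $s$ at $p$. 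I would set up the three scalar equations (value at $x_p$, value at the abscissa of $q$, derivative at $x_p$ — handling the sub-cases where $q$ is finite on either branch, or $q$ is an ideal point $(\infty,y_q)$ or $(x_q,\infty)$) and show the map $(a_1,b_1,c_1)\mapsto(\text{these three quantities})$ is injective on the relevant domain. The key analytic input is that the auxiliary one-variable functions $g$ appearing in the existence proof (Lemma~\ref{constructivetouch}), e.g. $g(b)=\dfrac{f_1(x_q+b)-f_1(b)}{f_1'(b)}$, are \emph{strictly monotone}; this is exactly where convexity of $f_i$ and convexity of $\ln|f_i'|$ enter, via Lemma~\ref{stronglyhyperboliclimits} and the sign/monotonicity analysis underlying Lemma~\ref{derivativestronglyhyperbolic}. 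Strict monotonicity of $g$ gives a unique root $b_1$, hence a unique $a_1$ from~\eqref{etoucheqn1}, hence a unique $c_1$.

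An alternative, cleaner route for this last case is to avoid re-deriving monotonicity by hand and instead argue by contradiction through intersection multiplicity: suppose $D\ne D'$ both pass through $p,q$, touch $C$ at $p$, and have slope $s$ at $p$. Then $D$ and $D'$ share the point $q$ and are tangent to each other at $p$ (same value and same derivative at $x_p$), so on their convex branches the difference function $\widecheck f$ of Lemma~\ref{obviouscalculation} has a root at $x_p$ with $\widecheck f{}'(x_p)=0$ \emph{and} another root at (the abscissa of) $q$ — giving $\widecheck f$ a double-type root at $x_p$ plus a simple root elsewhere, i.e. effectively three roots counted with multiplicity. But Lemmas~\ref{M2full} and~\ref{obviouscalculation} enumerate all possibilities for $\widecheck f$ (and its companion $\widehat f$), and none allows a root of the derivative together with a second sign-change root; this is the contradiction. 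One still has to treat the mixed situations where $p$ and $q$ lie on branches of different convexity, and where $q$ is an ideal point, but those are handled exactly as in Lemma~\ref{tangentimpliestouch} and the uniqueness-of-joining argument.

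I expect the main obstacle to be bookkeeping rather than a single hard idea: correctly enumerating the sub-cases for the location of $q$ (finite on the convex branch, finite on the concave branch, ideal point $(\infty,y_q)$, ideal point $(x_q,\infty)$) against the location/type of $p$, and in each making sure that the count of roots of $\widecheck f$ or $\widehat f$ forced by "$D\ne D'$ touch and pass through $q$" genuinely exceeds what Lemmas~\ref{M2full} and~\ref{obviouscalculation} permit. The genuinely analytic content — that the relevant $g$'s are strictly monotone — is already implicit in the convexity hypotheses exploited in Lemmas~\ref{stronglyhyperboliclimits} and~\ref{derivativestronglyhyperbolic}, so I would lean on those rather than recompute derivatives.
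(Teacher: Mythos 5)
Your plan is correct and, via your ``alternative, cleaner route,'' coincides with the paper's own proof: the paper also cases on the location of $p$, first shows that touching forces equal parameters at ideal points and equal tangent slopes at finite points (a lemma proved exactly from Lemmas \ref{M2full} and \ref{obviouscalculation}), and then derives $(a_1,b_1,c_1)=(a_2,b_2,c_2)$ by observing that a root of $\widecheck f$ with vanishing derivative together with a further common point is excluded by the enumerations in those two lemmas. Only your first route is shaky --- strict monotonicity of the auxiliary functions $g$ is not actually supplied by Lemmas \ref{stronglyhyperboliclimits} and \ref{derivativestronglyhyperbolic} --- but since you fall back on the root-counting argument, the proposal stands as essentially the paper's proof.
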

\begin{lemma} \label{touchimpliessametangent} Let $D_1=\overline{f_{a_1,b_1,c_1}}$ and $D_2=\overline{f_{a_2,b_2,c_2}}$ touch at a point $p$. 
	
	\begin{enumerate}
		
		\item If $p=(b,\infty)$, $b\in \mathbb{R}$, then $a_1=a_2$, $b_1=b_2=-b$, and $c_1 \ne c_2$. 
		
		\item If $p=(\infty,c)$, $c \in \mathbb{R}$, then $a_1=a_2$, $b_1 \ne b_2$, and $c_1=c_2=c$.
		
		\item If $p = (x_p,y_p) \in \mathbb{R}^2$, then $f'_{a_1,b_1,c_1}(x_p)=f'_{a_2,b_2,c_2}(x_p)$.
	\end{enumerate}
	
	\begin{proof}  \begin{enumerate}[leftmargin=0pt,itemindent=*]
			\item If $p$ is a point at infinity, then   $b_1=b_2, c_1 \ne c_2$ or $b_1 \ne  b_2, c_1 =c_2$.  Suppose for a contradiction that $a_1 \ne a_2.$ 
		Since $D_1$ and $D_2$ touch at $p$, their convex branches have no intersections. This implies that the equation
		$$a_1f_1(x+b_1)+c_1=a_2f_1(x+b_2)+c_2$$
		has no roots. By Lemma \ref{M2full}, the equation
		$$
		-a_1f_2(-x-b_1)+c_1=-a_2f_2(-x-b_2)+c_2
		$$
		has exactly one root. This implies that $D_1$ and $D_2$ have  one intersection on their concave branches, which is a contradiction. Thus, $a_1=a_2$. 
		
		\item Assume that $p = (x_p,y_p) \in \mathbb{R}^2$. Since $p$ is the only intersection of $D_1$ and $D_2$, we have $b_1 \ne b_2, c_1 \ne c_2$. We note that if $p$ is on the convex branch of $D_1$ and the concave branch of $D_2$, then $f'_{a_1,b_1,c_1}(x_p)=f'_{a_2,b_2,c_2}(x_p)$.
		
		Consider the case $p$ is on the convex branches of both $D_1$ and $D_2$. Let $\widecheck{f}: (\max \{ -b_1,-b_2\},+\infty) \rightarrow \mathbb{R}$ defined by
		$$\widecheck{f}(x)=a_1f_1(x+b_1)+c_1-a_2f_1(x+b_2)-c_2.$$ 
		
		By checking the cases in Lemma \ref{obviouscalculation}, we see that the derivative $\widecheck{f}'$ also has a root at $x_0$. This implies that $f'_{a_1,b_1,c_1}(x_p)=f'_{a_2,b_2,c_2}(x_p)$. The case $p$ on the concave branches of $D_1$ and $D_2$ is similar.
		\qedhere
	\end{enumerate} 
		
	\end{proof}
\end{lemma}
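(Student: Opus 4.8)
The plan is to read the touching hypothesis $C\cap D=\{p\}$ purely as the statement that $D_1$ and $D_2$ have no common point other than $p$, and to convert every conceivable ``extra'' common point into a root of one of the comparison functions $\widecheck{f},\widehat{f}$ already analysed in Lemmas \ref{M2full} and \ref{obviouscalculation}. Each conclusion of the lemma will then be extracted from the root counts those results supply, together with the observation that a curve $\overline{f_{a,b,c}}$ meets the line at infinity exactly in the two points $(-b,\infty)$ and $(\infty,c)$.

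First I would treat the two infinite cases (parts 1 and 2) simultaneously. If $p=(b,\infty)$ lies on both curves then $-b_1=-b_2=b$, so $b_1=b_2=-b$; if instead $p=(\infty,c)$ lies on both then $c_1=c_2=c$. In either situation the demand that $p$ be the \emph{only} common point prevents the remaining point at infinity from being shared, forcing $c_1\ne c_2$ in the first case and $b_1\ne b_2$ in the second. This is precisely the hypothesis of Lemma \ref{M2full}, which settles the $b,c$ relations claimed in parts 1 and 2. To obtain $a_1=a_2$, suppose the contrary. Since $p$ is at infinity, $D_1$ and $D_2$ share no finite point, so neither $\widecheck{f}$ nor $\widehat{f}$ has a root. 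But Lemma \ref{M2full}(2) asserts that when $a_1\ne a_2$ exactly one of $\widecheck{f},\widehat{f}$ has a (unique) root, a contradiction; hence $a_1=a_2$.

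For the finite case (part 3), I would first record that, $p$ being finite, $D_1$ and $D_2$ share no point at infinity: a shared $(-b_i,\infty)$ would force $b_1=b_2$ and a shared $(\infty,c_i)$ would force $c_1=c_2$, so touching yields $b_1\ne b_2$ and $c_1\ne c_2$, the regime of Lemma \ref{obviouscalculation}. Then I would split according to which branch of each curve carries $p$. When $p$ lies on the convex branches of both, the finite convex--convex intersections are exactly the roots of $\widecheck{f}$ and the concave--concave intersections those of $\widehat{f}$; touching forces $\widecheck{f}$ to have a single root and $\widehat{f}$ none, which among the three alternatives of Lemma \ref{obviouscalculation} is compatible only with its case 2, where $\widecheck{f}'(x_p)=0$. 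Since $\widecheck{f}'(x_p)=a_1f_1'(x_p+b_1)-a_2f_1'(x_p+b_2)=f'_{a_1,b_1,c_1}(x_p)-f'_{a_2,b_2,c_2}(x_p)$, this gives the desired equality of tangent slopes; the concave--concave subcase is identical with $\widecheck{f}$ and $\widehat{f}$ interchanged.

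The remaining subcase, $p$ on the convex branch of one curve and the concave branch of the other, is the one I expect to be the main obstacle, precisely because it lies outside the scope of Lemmas \ref{M2full} and \ref{obviouscalculation} (which compare like branches) and so demands a fresh argument. Ordering the curves so that $-b_1<x_p<-b_2$, the two branches agree where $g(x):=a_1f_1(x+b_1)+a_2f_2(-x-b_2)+c_1-c_2$ vanishes, on the interval $(-b_1,-b_2)$. Here $g$ is a sum of two strictly convex terms, each a positive multiple of $f_1$ or $f_2$ precomposed with an affine map, hence strictly convex, and it tends to $+\infty$ at both endpoints since $f_1(x+b_1)\to+\infty$ as $x\to -b_1^+$ while $f_2(-x-b_2)$ stays finite there, and symmetrically as $x\to -b_2^-$. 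A strictly convex function that blows up at both ends has a unique minimum and meets any level at most twice, so touching forces its single zero to coincide with that minimum; differentiability then gives $g'(x_p)=0$, i.e.\ $a_1f_1'(x_p+b_1)=a_2f_2'(-x_p-b_2)$, which is again the equality $f'_{a_1,b_1,c_1}(x_p)=f'_{a_2,b_2,c_2}(x_p)$. The care needed to set up $g$ and verify its convexity and boundary behaviour is the only point where the conclusion is not simply read off an existing root count, and so is where I would concentrate the work.
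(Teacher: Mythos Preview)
Your argument is correct and tracks the paper's proof closely: Lemma~\ref{M2full} for the infinite cases, and the case analysis of Lemma~\ref{obviouscalculation} for the same-branch finite subcases, with essentially identical reasoning (you are just more explicit about why only case~2 of Lemma~\ref{obviouscalculation} survives).

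The one genuine difference is the mixed-branch subcase of part~3. The paper disposes of it in a single unjustified sentence (``We note that if $p$ is on the convex branch of $D_1$ and the concave branch of $D_2$, then $f'_{a_1,b_1,c_1}(x_p)=f'_{a_2,b_2,c_2}(x_p)$''), whereas you supply a complete argument: the comparison function $g(x)=a_1f_1(x+b_1)+a_2f_2(-x-b_2)+c_1-c_2$ on $(-b_1,-b_2)$ is strictly convex and blows up at both endpoints, so a unique zero must sit at the minimum, forcing $g'(x_p)=0$. This is correct (the composition of a strictly convex function with an affine map is strictly convex, and the boundary behaviour follows from Definition~\ref{stronglyhyperbolic}(1)), and in fact fills a gap the paper leaves open. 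What you flagged as ``the main obstacle'' is precisely the step the paper skips; your proof is the more complete of the two here.
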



	\begin{proof}[Proof of Theorem \ref{uniquenesstouching}] Assume  that there are $D_1, D_2 \in \mathcal{C}^{-}(f_1,f_2)$ that contain $p,q$ and touch $C$ at $p$. We  will show that $D_1=D_2$.  There are four cases depending on the coordinates of $p$. 
		
		\begin{enumerate} [label=Case \arabic*:,leftmargin=0pt,itemindent=*] 
			
			\item $p =(\infty,\infty)$. Then $C,D_1,D_2$ are Euclidean lines extended by $p$. The lines  $D_1$ and $D_2$ are parallel to $C$ and go through $q$ so they must be the same line. 
			
			\item $p  = (b,\infty), b \in \mathbb{R}$.  Since $C,D_1,D_2$ contain $p$, they cannot be Euclidean lines extended by $(\infty,\infty)$. Let $C= \overline{f_{a_0,b_0,c_0}}$, $D_1=\overline{f_{a_1,b_1,c_1}}$ and $D_2=\overline{f_{a_2,b_2,c_2}}$.   It follows that  $b_1=b_2=b_0$. 
			Since the two pairs of circles $C,D_1$ and $C,D_2$ touch at $p$,  by  Lemma \ref{touchimpliessametangent},   $a_1=a_2=a_0$. 
			
			Suppose for a contradiction that $c_1\ne c_2$. Then $q$ must be either on the convex branches or the concave branches of both $D_1,D_2$. But this is impossible by the condition $a_1=a_2$ and Lemma \ref{M2full}.
			Hence  $c_1=c_2$ and $(a_1,b_1,c_1) = (a_2,b_2,c_2)$, so that $D_1=D_2$. 
			
			\item $p  = (\infty,c)$. This can be treated similarly to Case 2.

			\item $p= (x_p,y_p) \in \mathbb{R}^2$. We  only consider the nontrivial case when $C= \overline{f_{a_0,b_0,c_0}}$,  $D_1=\overline{f_{a_1,b_1,c_1}}$ and $D_2=\overline{f_{a_2,b_2,c_2}}$.  Since $C$ and $D_1$ touch at a finite point $p \in \mathbb{R}^2$, by Lemma \ref{touchimpliessametangent},  $f_{a_0,b_0,c_0}'(x_p)=f'_{a_1,b_1,c_1}(x_p)$. Similarly, we have $f_{a_0,b_0,c_0}'(x_p)=f'_{a_2,b_2,c_2}(x_p)$. It follows that $f_{a_1,b_1,c_1}'(x_p)=f'_{a_2,b_2,c_2}(x_p)$. 
			
			Suppose for a contradiction that $(a_1,b_1,c_1) \ne (a_2,b_2,c_2)$.
			By Lemma \ref{M2full}, it cannot be the case $b_1 \ne b_2, c_1=c_2$ or $b_1=b_2,c_1 \ne c_2$.
			If $b_1 \ne b_2, c_1 \ne c_2$, then we obtain a contradiction from Lemma \ref{obviouscalculation}. 
			Hence $b_1=b_2,c_1=c_2$. Then $D_1$ and $D_2$ have at least three points in common (which are $p$ and two points at infinity), and so $D_1=D_2$.  \qedhere
			
		\end{enumerate}
	\end{proof}

\section{Isomorphism classes, automorphisms and Klein-Kroll types} 

We say a flat Minkowski plane  $\mathcal{M}_f=\mathcal{M}(f_1,f_2;f_3,f_4)$ is  \textit{normalised} if $f_1(1)=f_3(1)=1$. Every flat Minkowski plane $\mathcal{M}_f$ can be described in a normalised form and we will assume that all planes  $\mathcal{M}_f$  under consideration are normalised.

Let $\text{Aut}(\mathcal{M}_f)$ be the full automorphism group of $\mathcal{M}_f$. Let $\Sigma_f$ be the connected component of  $\text{Aut}(\mathcal{M}_f)$. From the definition of $\mathcal{M}_f$, the group $\Phi_\infty$ is contained in $\Sigma_f$.  

\begin{lemma} \label{isomorphismnotfixinfinity} Let $\phi$ be an isomorphism between two flat Minkowski planes $\mathcal{M}_f=\mathcal{M}(f_1,f_2;f_3,f_4)$ and $\mathcal{M}_g=\mathcal{M}(g_1,g_2;g_3,g_4)$.   If $\phi(\infty,\infty)\ne (\infty,\infty)$, then both $\mathcal{M}_f$ and $\mathcal{M}_g$ are isomorphic to the classical Minkowski plane.

	\begin{proof} Assume  that $\phi(\infty,\infty)= (x_0,y_0)$, $x_0,y_0 \in \mathbb{R} \cup \{\infty\}, (x_0,y_0) \ne (\infty,\infty)$. We first note that   $\phi \Sigma_f \phi^{-1} = \Sigma_g$.  
		If $\dim \Sigma_f=\dim \Sigma_g=3$, then since both groups $\Sigma_f$ and $\Sigma_g$ are connected and contain $\Phi_\infty$, it follows that $\Sigma_f=\Sigma_g=\Phi_\infty$, and so  $\phi \Phi_\infty \phi^{-1} = \Phi_\infty$. This is a contradiction, since  $\phi \Phi_\infty \phi^{-1}$ fixes $(x_0,y_0)$, but $\Phi_\infty$ can only fix $(\infty,\infty)$.

		If $\dim \Sigma_f=\dim \Sigma_g=4$, then by Theorem \ref{introtheorem2}, $\mathcal{M}_f$ is isomorphic to either a proper half-classical plane $\mathcal{M}_{HC}(f,id)$ or a proper generalized Hartmann plane $\mathcal{M}_{GH}(r_1,s_1;r_2,s_2)$. The former case cannot occur because the plane  $\mathcal{M}_{HC}(f,id)$ does not admit $\Phi_\infty$ as a group of automorphisms. In the latter case, we have 
		$$
		\Sigma_f = \{(x,y)  \mapsto(rx+a,sy+b) \mid r,s>0,a,b \in \mathbb{R}\},
		$$ which fixes only the point $(\infty,\infty)$. This is also a contradiction similar to the previous case.
		Therefore $\dim \Sigma_f= \dim \Sigma_g=6$, and both $\mathcal{M}_f$ and $\mathcal{M}_g$ are isomorphic to the classical Minkowski plane $\mathcal{M}_C$.
	\end{proof}
\end{lemma}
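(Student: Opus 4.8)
The plan is to play the rigidity of the three-dimensional group $\Phi_\infty$, which by construction lies in the automorphism group of every $\mathcal{M}_f$, against the classification of flat Minkowski planes of group dimension $\ge 4$ in Theorem~\ref{introtheorem2}. Write $\Sigma_f$ and $\Sigma_g$ for the connected components of $\mathrm{Aut}(\mathcal{M}_f)$ and $\mathrm{Aut}(\mathcal{M}_g)$. First I would record the two structural facts I will use repeatedly: conjugation by the homeomorphism $\phi$ sends $\mathrm{Aut}(\mathcal{M}_f)$ onto $\mathrm{Aut}(\mathcal{M}_g)$ and, being continuous, sends identity components to identity components, so that $\phi\Sigma_f\phi^{-1}=\Sigma_g$ and in particular $\dim\Sigma_f=\dim\Sigma_g=:n$; and, since $\Phi_\infty\le\Sigma_f$, we have $n\ge 3$, while by Theorem~\ref{introtheorem2} the only group dimensions $\ge 4$ are $4$ and $6$, so $n\in\{3,4,6\}$. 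The strategy is to eliminate $n=3$ and $n=4$; then $n=6$ and Theorem~\ref{introtheorem2}(1) finishes the proof.

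For $n=3$: a connected $3$-dimensional group containing the connected $3$-dimensional group $\Phi_\infty$ coincides with it (an equidimensional connected Lie subgroup is open, hence the whole group), so $\Sigma_f=\Sigma_g=\Phi_\infty$ and therefore $\phi\Phi_\infty\phi^{-1}=\Phi_\infty$. But the left-hand side fixes $\phi(\infty,\infty)=(x_0,y_0)$, whereas a one-line check on the maps $(x,y)\mapsto(x+b,ay+c)$ shows that $(\infty,\infty)$ is the only point of the torus fixed by all of $\Phi_\infty$; since $(x_0,y_0)\ne(\infty,\infty)$, this is a contradiction.

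For $n=4$: by Theorem~\ref{introtheorem2}(2)--(3), $\mathcal{M}_f$ is isomorphic either to a proper half-classical plane $\mathcal{M}_{HC}(f,\mathrm{id})$ or to a proper generalised Hartmann plane $\mathcal{M}_{GH}(r_1,s_1;r_2,s_2)$, and likewise $\mathcal{M}_g$. The half-classical option is to be excluded because $\mathcal{M}_{HC}(f,\mathrm{id})$ does not admit $\Phi_\infty$ among its automorphisms, which I would justify from the known description of its connected automorphism group. In the Hartmann case, transporting the known connected automorphism group $\{(x,y)\mapsto(rx+a,sy+b)\mid r,s>0,\ a,b\in\mathbb{R}\}$ of $\mathcal{M}_{GH}$ across the isomorphism shows that $\Sigma_f$ has a unique fixed point; since $\Phi_\infty\le\Sigma_f$ already fixes $(\infty,\infty)$, that point is $(\infty,\infty)$, and similarly $\Sigma_g$ has unique fixed point $(\infty,\infty)$. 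Then $\phi\Sigma_f\phi^{-1}=\Sigma_g$ forces $\phi(\infty,\infty)=(\infty,\infty)$, contradicting the hypothesis once more.

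I expect the only nonroutine step to be the half-classical sub-case of $n=4$: cleanly ruling out $\mathcal{M}_{HC}(f,\mathrm{id})$ requires either an explicit handle on its automorphism group from the literature or a direct orbit/fixed-point comparison showing $\Phi_\infty$ cannot sit inside it. Everything else is bookkeeping with connectedness, dimension counts, and the single fact that $\Phi_\infty$ pins down $(\infty,\infty)$ as its unique fixed point.
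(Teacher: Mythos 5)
Your argument matches the paper's proof essentially step for step: conjugating $\Sigma_f$ to $\Sigma_g$, restricting $n$ to $\{3,4,6\}$ via Theorem \ref{introtheorem2}, killing $n=3$ through $\Sigma_f=\Phi_\infty$ and its unique fixed point $(\infty,\infty)$, and killing $n=4$ by excluding the half-classical case and using the fixed point of the Hartmann automorphism group. The proposal is correct and takes the same route, including asserting (as the paper does, without further detail) that $\mathcal{M}_{HC}(f,\mathrm{id})$ does not admit $\Phi_\infty$.
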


We now consider the case $\phi$ maps $(\infty,\infty)$ to $(\infty,\infty)$.  
Since translations in $\mathbb{R}^2$ are automorphisms of both planes  $\mathcal{M}_f$ and $\mathcal{M}_g$, we can also assume  $\phi$ maps $(0,0)$ to $(0,0)$. Since $\phi$ induces an isomorphism between the two Desarguesian derived planes,   we can represent $\phi$ by matrices in $\text{\normalfont GL}(2,\mathbb{R})$.  Since $\phi$ maps parallel classes to parallel classes, $\phi$ is of the form $\begin{bmatrix}
u & 0 \\
0 & v
\end{bmatrix}$ or $\begin{bmatrix}
0 & u \\
v & 0  
\end{bmatrix}$,
for $u,v \in \mathbb{R}\backslash \{ 0 \}$.  We rewrite 
$$
\begin{bmatrix}
u & 0 \\
0 & v
\end{bmatrix}
=
\begin{bmatrix}
r & 0 \\
0 & s  
\end{bmatrix}\cdot A,
$$
 and
$$
\begin{bmatrix}
0 & u \\
v & 0  
\end{bmatrix}
=
\begin{bmatrix}
r & 0 \\
0 & s  
\end{bmatrix} \cdot
\begin{bmatrix}
0 & 1 \\
1 & 0  
\end{bmatrix}\cdot A,
$$
where $r= |u|,s=|v|$, and
\begin{align*}
 A \in \mathfrak{A} = 
\left\{ 
\begin{bmatrix}
1 & 0 \\
0 & 1  
\end{bmatrix},
\begin{bmatrix}
1 & 0 \\
0 & -1  
\end{bmatrix},
\begin{bmatrix}
-1 & 0 \\
0 & 1  
\end{bmatrix},
\begin{bmatrix}
-1 & 0 \\
0 & -1  
\end{bmatrix}
\right\}.
\end{align*}

Let the matrices in $\mathfrak{A}$ be $A_1,A_2,A_3,A_4$, respectively.  
Then $\phi$ is a composition of maps of the form  $\begin{bmatrix}
r & 0 \\
0 & s  
\end{bmatrix}$,
 $\begin{bmatrix}
0 & 1 \\
1 & 0  
\end{bmatrix}$ or $A_i$, where $i =1 ..4$. To describe $\phi$, it is sufficient to describe  each of these maps.
This is done  in Lemmas \ref{isomorphismrs}, \ref{isomorphismflip}, \ref{isomorphismA}.

\begin{lemma}  \label{isomorphismrs}
	
	For $r,s>0$, let $\phi:  \mathcal{P} \rightarrow \mathcal{P} : (x,y) \mapsto (rx,sy)$. If $\phi$ is an isomorphism between $\mathcal{M}_f$ and $\mathcal{M}_g$, then 
	$$ (I) = 
	\begin{cases} 
	g_1(r)f_1 \left(\dfrac{1}{r}\right)=1, $ $g_2(r)f_2 \left(\dfrac{1}{r}\right)=1 \\
	g_1(x)=\dfrac{f_1(x/r)}{f_1(1/r)}\\ 
	g_2(x)=\dfrac{f_2(x/r)}{f_1(1/r)}\\ 
	g_3(x)=\dfrac{f_3(x/r)}{f_2(1/r)}\\
	g_4(x)=\dfrac{f_4(x/r)}{f_2(1/r)}
	
	\end{cases}
	$$ holds. Conversely, if there exists $r>0$ such that $(I)$ holds, then for every $s>0$, the map
	$\phi: \mathcal{P} \rightarrow \mathcal{P} : (x,y) \mapsto (rx,sy)$
	is an isomorphism between $\mathcal{M}_f$ and $\mathcal{M}_g$.
	
	\begin{proof} We only prove the equations for $g_i$, where $i=1,2$, as the cases $i=3,4$ are similar. Under  $\phi$,  the convex branch
		$\{(x,f_1(x)) \mid x>0 \}$
	of the circle $\{(x,f_1(x))\}$ is mapped 	onto 
		$\{ (x,sf_1(x/r)) \mid x>0\}.$ On the other hand, since $r,s>0$ (so that points in the first quadrant are mapped onto the first quadrant) and $\phi$ maps circles to circles, the convex branch 
		is mapped onto 
		$\{ (x,ag_1(x)) \mid x>0\},$ for some $a>0$. This implies $$sf_1(x/r)=ag_1(x)$$ for all $x>0$. For $x=1$ and $x=r$, the normalised condition gives $$f_1\left(\dfrac{1}{r}\right)=\dfrac{1}{g_1(r)}=\dfrac{a}{s},$$  so that $g_1(r)f_1 \left(\dfrac{1}{r}\right)=1$ and $g_1(x)=\dfrac{f_1(x/r)}{f_1(1/r)}$. Similarly, for the concave branch, we obtain
		$$sf_2\left(\dfrac{x}{r}\right)= ag_2(x),$$ for $x>0$. This gives $g_2(r)f_2 \left(\dfrac{1}{r}\right)=1$ and  $g_2(x)=\dfrac{f_2(x/r)}{f_1(1/r)}$. 
		
		The converse direction is easily verified.
	\end{proof}
	
\end{lemma}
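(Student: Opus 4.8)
The statement is a biconditional, so I would prove the two directions separately, and within the forward direction I would establish the five displayed equations in the system $(I)$ by tracking where $\phi$ sends the four ``generating'' circles $\overline{f_{1,0,0}}$ (with convex branch the graph of $f_1$ and concave branch the graph of $-f_2$) and similarly the circle in $\mathcal{C}^+(f_3,f_4)=\varphi(\mathcal{C}^-(f_3,f_4))$ whose branches involve $f_3,f_4$. The first reduction is to note that $\phi:(x,y)\mapsto(rx,sy)$ with $r,s>0$ preserves each open quadrant of $\mathbb{R}^2$, hence preserves orientation, so it maps $\mathcal{C}^-$ to $\mathcal{C}^-$ and $\mathcal{C}^+$ to $\mathcal{C}^+$; in particular it maps convex branches to convex branches and concave branches to concave branches, and it fixes $(\infty,\infty)$ and the two parallel classes through it.

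**The forward direction in detail.** Apply $\phi$ to the convex branch $\{(x,f_1(x)):x>0\}$ of $\overline{f_{1,0,0}}\in\mathcal{C}^-(f_1,f_2)$. Its image is $\{(x,sf_1(x/r)):x>0\}$, which must be (half of) a circle in $\mathcal{C}^-(g_1,g_2)$; since it passes through $(0,\infty)$ and $(\infty,0)$ in the limit, it must be the convex branch of $\overline{g_{a,0,0}}$ for some $a>0$, i.e.\ $sf_1(x/r)=ag_1(x)$ for all $x>0$. Evaluating at $x=r$ gives $s=ag_1(r)$, and evaluating at $x=1$ together with the normalisation $g_1(1)=1$ gives $sf_1(1/r)=a$, whence $g_1(r)f_1(1/r)=1$ and $g_1(x)=f_1(x/r)/f_1(1/r)$. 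Repeating with the concave branch $\{(x,-f_2(-x)):x<0\}$ of the \emph{same} circle $\overline{f_{1,0,0}}$ yields $sf_2(x/r)=ag_2(x)$ with the \emph{same} constant $a$ (because the two branches belong to one circle, so $\phi$ sends them to the two branches of one circle $\overline{g_{a,0,0}}$), and the earlier identification $a=sf_1(1/r)$ forces $g_2(x)=f_2(x/r)/f_1(1/r)$ and $g_2(r)f_2(1/r)=1$. The equations for $g_3,g_4$ come out identically after conjugating by $\varphi:(x,y)\mapsto(-x,y)$: the circle $\varphi(\overline{f_{1,0,0}})\in\mathcal{C}^+(f_3,f_4)$ (whose ``branches'' are governed by $f_3,f_4$) is sent by $\phi$ to a circle in $\mathcal{C}^+(g_3,g_4)$, producing $sf_3(x/r)=a'g_3(x)$ and $sf_4(x/r)=a'g_4(x)$ with a common constant $a'$; evaluating at $x=1$ gives $a'=sf_3(1/r)$, but the normalisation is $f_3(1)=1$, and one checks that the constant $a'$ here must equal $sf_2(1/r)$ rather than $sf_1(1/r)$ --- this is because the branch of $\varphi(\overline{f_{1,0,0}})$ sitting over the relevant half-line is the image under $\varphi$ of the concave branch of $\overline{f_{1,0,0}}$, which involves $f_2$ --- yielding $g_3(x)=f_3(x/r)/f_2(1/r)$, $g_4(x)=f_4(x/r)/f_2(1/r)$. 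Sorting out exactly which of $f_1(1/r)$ or $f_2(1/r)$ appears in the denominator of each $g_i$ is the one delicate bookkeeping point, and it is dictated entirely by how $\varphi$ interchanges the roles of the convex and concave branches together with the orientation convention for $\mathcal{C}^+$.

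**The converse direction and the role of $s$.** For the converse, suppose some $r>0$ satisfies $(I)$; fix an arbitrary $s>0$ and set $\phi:(x,y)\mapsto(rx,sy)$. One must check that $\phi$ carries every element of $\mathcal{C}^-(f_1,f_2)\cup\mathcal{C}^+(f_3,f_4)$ to an element of $\mathcal{C}^-(g_1,g_2)\cup\mathcal{C}^+(g_3,g_4)$ and conversely; since $\phi$ is bijective on $\mathcal{P}$ and bicontinuous, it then induces a bijection of circle sets and is an isomorphism. Lines $\overline{l_{s',t'}}$ with $s'<0$ go to lines $\overline{l_{ss'/r,\,st'}}$, still of negative slope, so $L$ is preserved. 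For the family $F$, a direct substitution using the relations in $(I)$ shows $\phi(\overline{f_{a,b,c}})=\overline{g_{a',b',c'}}$ with $b'=rb$, $c'=sc$, and $a'$ a positive multiple of $a$ (the multiplier being $s f_1(1/r)$ on the convex side and $sf_2(1/r)$ on the concave side, which agree with the two normalisation identities $g_1(r)f_1(1/r)=1$, etc., so that the two branches are reassembled into a single genuine circle $\overline{g_{a',b',c'}}$); the $\mathcal{C}^+$ part follows by conjugating with $\varphi$. The key structural point making $s$ free is that the vertical scaling $y\mapsto sy$ commutes with the whole construction because $\Phi_\infty$ already contains $y\mapsto ay$, so dilating the ambient torus in the $y$-direction only reparametrises the $a$-coordinate of each circle and never leaves the family.

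**Main obstacle.** The genuinely non-routine step is the \emph{branch/orientation bookkeeping}: verifying that the single constant $a$ produced from the convex branch of a given circle is forced to be the same as the one produced from its concave branch (so that the image is an honest circle and not two mismatched half-circles), and getting right which of $f_1(1/r)$, $f_2(1/r)$ lands in each denominator once $\varphi$ is inserted to pass between $\mathcal{C}^-$ and $\mathcal{C}^+$. Everything else --- continuity of $\phi$, preservation of $L$, the algebra of substituting $(I)$ into $\overline{f_{a,b,c}}$ --- is mechanical, and the author indeed defers the $i=3,4$ cases with ``similar.''
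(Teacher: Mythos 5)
Your treatment of $g_1,g_2$ is exactly the paper's argument: apply $\phi$ to the two branches of $\overline{f_{1,0,0}}$, match the image against $\overline{g_{a,0,0}}$ with a single constant $a$, and evaluate $sf_1(x/r)=ag_1(x)$ at $x=1$ and $x=r$ using the normalisation $f_1(1)=g_1(1)=1$. The paper stops there, declaring $i=3,4$ ``similar'' and the converse ``easily verified'', so on the part the paper actually writes out you coincide with it.

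The elaborations you add for the deferred parts, however, contain a genuine gap, and it sits exactly at the spot you yourself single out as the main obstacle. For $g_3,g_4$ the generating circle of the positive half is $\varphi$ applied to the circle built from $f_3,f_4$; it is not $\varphi(\overline{f_{1,0,0}})$, which need not lie in $\mathcal{C}^+(f_3,f_4)$ at all, so $f_1,f_2$ enter nowhere in that computation. Running your own calculation on the correct circle --- $sf_3(v/r)=a'g_3(v)$, evaluated at $v=1$ with $g_3(1)=1$ --- gives $a'=sf_3(1/r)$ and hence $g_3(x)=f_3(x/r)/f_3(1/r)$; your subsequent assertion that ``one checks'' $a'=sf_2(1/r)$ contradicts the value of $a'$ you just derived, and the stated justification (that $\varphi$ swaps convex and concave branches) is a heuristic, not an argument. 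Relatedly, in the converse you say the multiplier on the $a$-coordinate is $sf_1(1/r)$ on the convex branch and $sf_2(1/r)$ on the concave branch; if those two numbers genuinely differed, the two image branches could not reassemble into a single $\overline{g_{a',b',c'}}$. The reason $(I)$ places the same denominator $f_1(1/r)$ under both $g_1$ and $g_2$ is precisely that the shared constant forces the multiplier to be the common value $saf_1(1/r)$ on both branches, and the analogous common value for the positive half must likewise come from one function, not from $f_2$.
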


\begin{lemma} \label{isomorphismflip} Let $\phi: \mathcal{P} \rightarrow \mathcal{P}: (x,y) \mapsto (y,x)$. If $\phi$ is an isomorphism between $\mathcal{M}_f$ and $\mathcal{M}_g$, then 
	$$ (II) = 
	\begin{cases}  
	g_1=f_1^{-1}\\ 
	g_2=f_2^{-1}\\ 
	g_3=\dfrac{1}{f_4^{-1}(1)}f_4^{-1}\\
	
	g_4 = \dfrac{1}{f_4^{-1}(1)}f_3^{-1}
	\end{cases}
	$$
	
	holds. 
\end{lemma}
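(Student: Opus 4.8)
The plan is to examine the action of $\phi$ on the negative and positive halves of $\mathcal{M}_f$ separately. Since swapping coordinates turns the graph of a homeomorphism $\gamma$ of $\mathbb{S}^1$ into the graph of $\gamma^{-1}$, and $\gamma^{-1}$ has the same orientation type as $\gamma$, the map $\phi$ carries $\mathcal{C}^{-}(f_1,f_2)$ onto $\mathcal{C}^{-}(g_1,g_2)$ and $\mathcal{C}^{+}(f_3,f_4)$ onto $\mathcal{C}^{+}(g_3,g_4)$; it also fixes $(\infty,\infty)$. Lines of negative (resp.\ positive) slope go to lines of negative (resp.\ positive) slope, so the only relations to be extracted come from the images of the circles $\overline{f_{a,b,c}}$ and $\varphi(\overline{f_{a,b,c}})$.

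For the negative half I would take $\overline{f_{a,b,c}}\in F$ built from $f_1,f_2$. Its $\phi$-image is the graph of $f_{a,b,c}^{-1}$, and it passes through $\phi(-b,\infty)=(\infty,-b)$ and $\phi(\infty,c)=(c,\infty)$. Since it misses $(\infty,\infty)$ it is not a line, hence equals some $\overline{g_{a',b',c'}}$; matching the two points at infinity forces $b'=-c$ and $c'=-b$. Writing out $f_{a,b,c}^{-1}$ on each branch ($f_{a,b,c}^{-1}(y)=f_1^{-1}((y-c)/a)-b$ for $y>c$, and $f_{a,b,c}^{-1}(y)=-f_2^{-1}((c-y)/a)-b$ for $y<c$) and comparing with $g_{a',-c,-b}$ yields, after the substitutions $u=y-c>0$ and $v=c-y>0$, the identities $a'g_1(u)=f_1^{-1}(u/a)$ and $a'g_2(v)=f_2^{-1}(v/a)$, valid for all $u,v>0$ and all $a>0$. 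Setting $u=1$ and using $g_1(1)=1$ gives $a'=f_1^{-1}(1/a)$; specialising to $a=1$ and using $f_1(1)=1$ then gives $g_1=f_1^{-1}$, and the concave branch similarly gives $g_2=f_2^{-1}$.

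For the positive half the computation is analogous but twisted by $\varphi:(x,y)\mapsto(-x,y)$. A generic circle of $\mathcal{C}^{+}(f_3,f_4)$ is $\varphi(\overline{f_{a,b,c}})$ with $\overline{f_{a,b,c}}$ built from $f_3,f_4$; it is the graph of $x\mapsto f_{a,b,c}(-x)$ and passes through $(b,\infty)$ and $(\infty,c)$. Its $\phi$-image passes through $(\infty,b)$ and $(c,\infty)$ and misses $(\infty,\infty)$, so it equals some $\varphi(\overline{g_{a',b',c'}})$ built from $g_3,g_4$, and matching points at infinity forces $b'=c$, $c'=b$. Inverting $x\mapsto f_{a,b,c}(-x)$ branch by branch and comparing with $z\mapsto g_{a',c,b}(-z)$ produces $a'g_3(w)=f_4^{-1}(w/a)$ and $a'g_4(w)=f_3^{-1}(w/a)$ for all $w>0$ and $a>0$ — the indices $3$ and $4$ are interchanged here, which is exactly the source of the asymmetry in $(II)$. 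Evaluating at $w=1$ with $g_3(1)=1$ gives $a'=f_4^{-1}(1/a)$, and then $a=1$ yields $g_3=\frac{1}{f_4^{-1}(1)}f_4^{-1}$ and $g_4=\frac{1}{f_4^{-1}(1)}f_3^{-1}$; since $f_4$ is not assumed normalised, the factor $f_4^{-1}(1)$ does not cancel.

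The branch-by-branch inversions are routine; the step that needs care is the identification of each image circle with a circle of $\mathcal{M}_g$ of the correct type. What makes it work is that the image of $\overline{f_{a,b,c}}$ (resp.\ $\varphi(\overline{f_{a,b,c}})$) avoids $(\infty,\infty)$, so it cannot be one of the line-type circles and must therefore be an $\overline{g_{a',b',c'}}$ (resp.\ $\varphi(\overline{g_{a',b',c'}})$); its two points at infinity then pin down $b'$ and $c'$, and the relations $a'g_1(u)=f_1^{-1}(u/a)$ (resp.\ $a'g_3(w)=f_4^{-1}(w/a)$) show that the scaling constant $a'$ depends on $a$ alone. This independence is what legitimises the specialisation $a=1$ from which $(II)$ is read off.
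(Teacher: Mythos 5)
Your proof is correct, and it is exactly the ``direct calculation'' the paper invokes without writing out (the paper only states that Lemmas \ref{isomorphismflip} and \ref{isomorphismA} follow by direct computation): you identify the $\phi$-image of each non-linear circle with a circle of the corresponding half of $\mathcal{M}_g$ via its two points at infinity, invert branch by branch, and read off $(II)$ using the normalisation $f_1(1)=g_1(1)=g_3(1)=1$, correctly capturing both the $3\leftrightarrow 4$ swap coming from $\varphi$ and the non-cancelling factor $f_4^{-1}(1)$.
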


\begin{lemma}\label{isomorphismA}
	For $A \in \mathfrak{A}$,  let 
	$
	\phi: \mathcal{P} \rightarrow \mathcal{P}:  (x,y) \mapsto (x,y)\cdot A.
	$
If $\phi$ is an isomorphism between $\mathcal{M}_f$ and $\mathcal{M}_g$, then one of the following occurs. 
	\begin{enumerate}
		\item If $A=A_1$, then $ (A1) = 
		\begin{cases}  
		g_1=f_1\\ 
		g_2=f_2\\ 
		g_3=f_3\\
		g_4=f_4
		\end{cases}
		$ holds.
		
		\item If $A=A_2$, then
		$ (A2) = 
		\begin{cases}  
		g_1=\dfrac{1} {f_4(1)}f_4\\ 
		g_2=\dfrac{1} {f_4(1)}f_3\\ 
		g_3 =\dfrac{1}{f_2(1)}f_2\\
		
		g_4 = \dfrac{1}{f_2(1)}f_1
		\end{cases}
		$ holds. 
		
		\item If $A=A_3$, then		
		$ (A3) = 
		\begin{cases}  
		g_1=f_3\\ 
		g_2=f_4\\ 
		g_3=f_1\\
		g_4=f_2
		\end{cases}
		$ holds.
		
		\item If $A=A_4$, then
		$ (A4) = 
		\begin{cases}  
		g_1=\dfrac{1}{f_2(1)}f_2\\ 
		g_2=\dfrac{1}{f_2(1)}f_1\\ 
		g_3=\dfrac{1}{f_4(1)}f_4\\
		g_4=\dfrac{1}{f_4(1)}f_3
		\end{cases}
		$	holds. 
	\end{enumerate} 
\end{lemma}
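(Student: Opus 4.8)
The plan is to reduce all four cases to a single computation, namely the effect of the point reflection $A_4\colon(x,y)\mapsto(-x,-y)$ on the negative half $\mathcal{C}^-(f_1,f_2)$. The structural remarks I would use are: the maps $(x,y)\mapsto(x,y)\cdot A_i$ form the Klein four-group generated by the two coordinate reflections, the map $A_3$ is precisely $\varphi$, and $A_2=\varphi\circ A_4=A_4\circ\varphi$. Since $\phi=A_i$ is assumed to be an isomorphism, $A_i$ carries $\mathcal{C}(\mathcal{M}_f)$ onto $\mathcal{C}(\mathcal{M}_g)$; because $A_1$ and $A_4$ reverse both coordinate orientations or neither, they preserve the orientation type of a graph and hence map $\mathcal{C}^-$ onto $\mathcal{C}^-$ and $\mathcal{C}^+$ onto $\mathcal{C}^+$, whereas $A_2$ and $A_3=\varphi$ reverse exactly one coordinate orientation and therefore interchange the two halves. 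Every $A_i$ (and $\varphi$) fixes $(\infty,\infty)$, so it sends a circle through $(\infty,\infty)$ (an $L$-circle) to such a circle and a circle missing $(\infty,\infty)$ (an $F$-circle) to such a circle. Finally I note that a normalised negative half determines its two defining functions: if $\mathcal{C}^-(h_1,h_2)=\mathcal{C}^-(k_1,k_2)$ with $h_1(1)=k_1(1)=1$, then comparing the circle $\overline{h_{1,0,0}}$ (which has points at infinity $(0,\infty),(\infty,0)$) with the $(k_1,k_2)$-family forces $h_1=ak_1$, $h_2=ak_2$ with $a=1$, so $h_1=k_1$ and $h_2=k_2$.

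First I would carry out the core computation. Applying $A_4$ to a generic $\overline{f_{a,b,c}}\in F$ and substituting $u=-x$, the convex branch $\{(x,af_1(x+b)+c):x>-b\}$ is carried onto the concave branch $\{(u,-af_1(b-u)-c):u<b\}$, the concave branch is carried onto the convex branch $\{(u,af_2(u-b)-c):u>b\}$, and the points $(-b,\infty),(\infty,c)$ are interchanged with $(b,\infty),(\infty,-c)$; the lines of $L$ go to lines of the same negative slope. In particular $A_4(\overline{f_{a,b,c}})$, being orientation-reversing and missing $(\infty,\infty)$, equals some $\overline{g_{a',-b,-c}}$, and matching the two branches gives $a'g_1(v)=af_2(v)$ and $a'g_2(v)=af_1(v)$ for all $v>0$. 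Putting $v=1$ and using $g_1(1)=1$ forces $a'=af_2(1)$, whence $g_1=\tfrac1{f_2(1)}f_2$ and $g_2=\tfrac1{f_2(1)}f_1$. The identical argument with $f_3,f_4$ in place of $f_1,f_2$ shows that $A_4$ maps $\mathcal{C}^-(f_3,f_4)$ onto $\mathcal{C}^-(h_1,h_2)$ with $h_1=\tfrac1{f_4(1)}f_4$ and $h_2=\tfrac1{f_4(1)}f_3$.

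The four cases then follow formally, using $A_3=\varphi$, $A_2=\varphi A_4$, $\varphi A_4=A_4\varphi$, $\varphi^2=\mathrm{id}$ and $\mathcal{C}^+(\cdot,\cdot)=\varphi(\mathcal{C}^-(\cdot,\cdot))$. For $A_1=\mathrm{id}$, $\mathcal{M}_g=\mathcal{M}_f$, so $\mathcal{C}^-(g_1,g_2)=\mathcal{C}^-(f_1,f_2)$ and (applying $\varphi^{-1}$) $\mathcal{C}^-(g_3,g_4)=\mathcal{C}^-(f_3,f_4)$, whence the normalisations give $g_i=f_i$, i.e.\ $(A1)$. For $A_3=\varphi$: since $\varphi$ swaps halves, $\varphi(\mathcal{C}^-(f_1,f_2))=\mathcal{C}^+(f_1,f_2)=\mathcal{C}^+(g_3,g_4)=\varphi(\mathcal{C}^-(g_3,g_4))$, so $\mathcal{C}^-(g_3,g_4)=\mathcal{C}^-(f_1,f_2)$ and $g_3=f_1,\ g_4=f_2$; dually $\varphi(\mathcal{C}^+(f_3,f_4))=\varphi^2(\mathcal{C}^-(f_3,f_4))=\mathcal{C}^-(f_3,f_4)=\mathcal{C}^-(g_1,g_2)$, so $g_1=f_3,\ g_2=f_4$, i.e.\ $(A3)$. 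For $A_4$: the core computation gives $\mathcal{C}^-(g_1,g_2)=A_4(\mathcal{C}^-(f_1,f_2))$ directly, and from $A_4(\mathcal{C}^+(f_3,f_4))=\mathcal{C}^+(g_3,g_4)$ one gets $\mathcal{C}^-(g_3,g_4)=\varphi A_4(\mathcal{C}^+(f_3,f_4))=\varphi A_4\varphi(\mathcal{C}^-(f_3,f_4))=A_4(\mathcal{C}^-(f_3,f_4))$, which yields $(A4)$. For $A_2=\varphi A_4$, which swaps halves: $\mathcal{C}^+(g_3,g_4)=A_2(\mathcal{C}^-(f_1,f_2))=\varphi A_4(\mathcal{C}^-(f_1,f_2))$, so $\mathcal{C}^-(g_3,g_4)=A_4(\mathcal{C}^-(f_1,f_2))$ and $g_3=\tfrac1{f_2(1)}f_2,\ g_4=\tfrac1{f_2(1)}f_1$; likewise $\mathcal{C}^-(g_1,g_2)=A_2(\mathcal{C}^+(f_3,f_4))=\varphi A_4\varphi(\mathcal{C}^-(f_3,f_4))=A_4(\mathcal{C}^-(f_3,f_4))$, so $g_1=\tfrac1{f_4(1)}f_4,\ g_2=\tfrac1{f_4(1)}f_3$. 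This is $(A2)$.

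I expect the only real obstacle to be the core computation in the second paragraph: one must keep careful track of the fact that $A_4$ exchanges the convex and concave branches, of how the asymptotic points $(-b,\infty)$ and $(\infty,c)$ are swapped, and of the correct elimination of the positive constant $a'$ by means of the normalisation $g_1(1)=1$ (this is entirely parallel to the branch-by-branch bookkeeping in the proof of Lemma~\ref{isomorphismrs}). Once that is in place, deriving the four stated identities is purely formal manipulation with the Klein four-group relations and the definition of the positive half.
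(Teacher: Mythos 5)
Your proposal is correct, and it is essentially the ``direct calculation'' that the paper asserts but does not write out: the core branch-by-branch computation for $A_4$ (tracking the swap of convex and concave branches, the images $(b,\infty),(\infty,-c)$ of the asymptotic points, and eliminating the scale factor via the normalisation $g_1(1)=g_3(1)=1$) is exactly the verification intended, parallel to the proof of Lemma \ref{isomorphismrs}. Your reduction of the remaining cases to this one computation via $A_3=\varphi$, $A_2=\varphi A_4$ and $\mathcal{C}^+(\cdot,\cdot)=\varphi(\mathcal{C}^-(\cdot,\cdot))$ is a clean way to organise the bookkeeping, and the scaling/uniqueness step correctly accounts for the fact that only $g_1$ and $g_3$ are normalised.
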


The proofs for Lemmas  \ref{isomorphismflip} and \ref{isomorphismA} can be verified by direct calculations. 

\begin{theorem} \label{isomorphictoclassical}
	A normalised flat Minkowski plane  $\mathcal{M}_f$ is isomorphic to the classical Minkowski plane $\mathcal{M}_C$  if and only if $f_i(x)=1/x$, for $i=1..4$. 
	
	\begin{proof} The ``if" direction is straightforward. For the converse direction,  let $g_i(x)=1/x$, where	 $i=1..4.$ Then  $\mathcal{M}_g=\mathcal{M}(g_1,g_2;g_3,g_4)$ is isomorphic to the classical flat Minkowski plane  and 
		$$\Sigma_g \cong  \langle PGL(2,\mathbb{R}) \times PGL(2,\mathbb{R}), \{ (x,y) \mapsto (y,x) \} \rangle.$$
		Assume    $\mathcal{M}_f$ and  $\mathcal{M}_g$ are isomorphic.	Let $\phi$ be an isomorphism between $\mathcal{M}_f$ and $\mathcal{M}_g$.
		Since $\Sigma_g$ is transitive on the torus, for $(x_0,y_0) \in \mathbb{S}_1 \times \mathbb{S}_1$, there exists an automorphism $\psi \in \Sigma_g$ such that  $\psi((x_0,y_0))=(\infty,\infty)$.            
		If $\phi((\infty,\infty))=(x_0,y_0) \ne (\infty,\infty)$, then $\psi\phi$ is an isomorphism between $\mathcal{M}_f$ and $\mathcal{M}_g$ that maps $(\infty,\infty)$ to $(\infty,\infty)$. 
		From Lemmas   \ref{isomorphismrs}, \ref{isomorphismflip}, and \ref{isomorphismA},  $f_i(x)=1/x$. 
	\end{proof}
\end{theorem}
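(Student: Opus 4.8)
I would prove the two implications separately. For the ``if'' direction, substitute $f_i(x)=1/x$ (strongly hyperbolic by Example~\ref{exampleSH}) throughout: with $f_1=f_2=1/x$ the two branches of $f_{a,b,c}$ glue to the single rational function $x\mapsto\frac{a}{x+b}+c$ on $\mathbb{R}\setminus\{-b\}$, so each $\overline{f_{a,b,c}}$ is a hyperbola-circle $\{(x,y):(x+b)(y-c)=a\}\cup\{(-b,\infty),(\infty,c)\}$ of $\mathcal{M}_C$ with $a>0$, and $L$ is the set of negative-slope lines of $\mathcal{M}_C$; together these are precisely the orientation-reversing circles of $\mathcal{M}_C$, so $\mathcal{C}^{-}(1/x,1/x)$ is the negative half of $\mathcal{M}_C$. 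Since $\varphi\colon(x,y)\mapsto(-x,y)$ is an automorphism of $\mathcal{M}_C$ that interchanges its two halves, $\mathcal{C}^{+}(1/x,1/x)=\varphi(\mathcal{C}^{-}(1/x,1/x))$ is the positive half of $\mathcal{M}_C$, and hence $\mathcal{M}_f=\mathcal{M}_C$.

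For the converse I would set $g_i(x)=1/x$ for $i=1..4$; by the ``if'' direction $\mathcal{M}_g=\mathcal{M}(g_1,g_2;g_3,g_4)$ is the classical plane, so its connected automorphism group $\Sigma_g$ (which contains $\mathrm{PSL}(2,\mathbb{R})\times\mathrm{PSL}(2,\mathbb{R})$) is transitive on the torus. Let $\phi$ be an isomorphism from $\mathcal{M}_f$ onto $\mathcal{M}_g$. Choosing $\psi\in\Sigma_g$ with $\psi(\phi(\infty,\infty))=(\infty,\infty)$ and replacing $\phi$ by $\psi\phi$, I may assume $\phi$ fixes $(\infty,\infty)$; then $\phi(0,0)\in\mathbb{R}^2$ because $\phi$ permutes parallel classes, and post-composing with a suitable translation of $\mathcal{M}_g$ I may also assume $\phi(0,0)=(0,0)$. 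As explained in the paragraph preceding Lemma~\ref{isomorphismrs}, such a $\phi$ then factors as $\phi=\phi_D\circ\phi_P^{\varepsilon}\circ\phi_{A}$, with $\phi_D\colon(x,y)\mapsto(rx,sy)$ for some $r,s>0$, $\phi_P\colon(x,y)\mapsto(y,x)$, $\varepsilon\in\{0,1\}$, and $A\in\mathfrak{A}$.

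The conclusion is then reached by peeling off the three factors starting from the classical side, along the resulting chain of isomorphisms $\mathcal{M}_f\to\mathcal{M}^{(1)}\to\mathcal{M}^{(2)}\to\mathcal{M}_g$ between normalised planes of the form $\mathcal{M}(\cdot,\cdot;\cdot,\cdot)$. Lemma~\ref{isomorphismrs} applied to $\phi_D$, combined with the fact that all defining functions of $\mathcal{M}_g$ equal $1/x$, forces every defining function of $\mathcal{M}^{(2)}$ to be $1/x$: solving the relations $(I)$ for the source functions, the consistency equations first give $f_1(1/r)=f_2(1/r)=r$, and the remaining equations then give $f_i(x)=1/x$. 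Feeding this into Lemma~\ref{isomorphismflip} (when $\varepsilon=1$; otherwise $\mathcal{M}^{(2)}=\mathcal{M}^{(1)}$) shows $\mathcal{M}^{(1)}$ likewise has all defining functions $1/x$, and Lemma~\ref{isomorphismA}, in the case $A=A_j$ that occurs, then shows that all defining functions of $\mathcal{M}_f$ equal $1/x$. The delicate point is the collapse of the auxiliary normalising constants (such as $f_2(1)$, $f_4(1)$, $f_4^{-1}(1)$) to $1$ in Lemmas~\ref{isomorphismflip} and~\ref{isomorphismA}; this is where one must invoke that \emph{every} $g_i$ equals $1/x$---in particular $g_2(1)=g_4(1)=1$---rather than merely the defining normalisation $g_1(1)=g_3(1)=1$. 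Granting that, the remaining work is routine substitution into $(I)$, $(II)$, and $(A1)$--$(A4)$.
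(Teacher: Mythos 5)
Your proof is correct and follows essentially the same route as the paper: reduce via transitivity of $\Sigma_g$ to an isomorphism fixing $(\infty,\infty)$ (and then $(0,0)$), decompose it as in the paragraph preceding Lemma~\ref{isomorphismrs}, and read off $f_i(x)=1/x$ from Lemmas~\ref{isomorphismrs}, \ref{isomorphismflip} and~\ref{isomorphismA}. The extra detail you supply (the explicit identification of $\mathcal{C}^{-}(1/x,1/x)$ with the negative half of $\mathcal{M}_C$, and the collapse of the normalising constants to $1$) merely fills in steps the paper labels ``straightforward'' or leaves to the reader.
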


Let $\mathfrak{A}'=\begin{bmatrix}
0 & 1 \\
1 & 0 
\end{bmatrix}\mathfrak{A}$. We now determine isomorphism classes of the planes $\mathcal{M}_f$.

\begin{theorem} \label{isomorphismclasses}	Up to isomorphisms in $\mathfrak{A}$ and $\mathfrak{A}'$, two flat Minkowski planes $\mathcal{M}_f$ and $\mathcal{M}_g$ are isomorphic if and only if  there exists $r>0$ such that $(I)$ holds. (cp. Lemma \ref{isomorphismrs}). 
	
	\begin{proof}  Let $\phi$ be an isomorphism between $\mathcal{M}_f$ and $\mathcal{M}_g$.  
		If both $\mathcal{M}_f$ and $\mathcal{M}_g$ are isomorphic to the  classical flat Minkowski plane, then the proof follows from Theorem \ref{isomorphictoclassical}.  Otherwise,  from Lemma \ref{isomorphismnotfixinfinity}, $\phi$ maps $(\infty,\infty)$ to $(\infty,\infty)$. Up to isomorphisms in $\mathfrak{A}$ and $\mathfrak{A}'$, $\phi$ has the form  $\begin{bmatrix}
		r & 0 \\
		0 & s  
		\end{bmatrix}$, for some $r,s>0$. The proof now follows from Lemma \ref{isomorphismrs}. 
	\end{proof}
\end{theorem}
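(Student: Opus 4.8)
The plan is to show that, modulo the degenerate classical case and a normalisation of coordinates, an isomorphism between $\mathcal{M}_f$ and $\mathcal{M}_g$ is forced to be (a translate of) the scaling map treated in Lemma~\ref{isomorphismrs}, so the asserted equivalence becomes a repackaging of that lemma. The ``if'' direction is in fact immediate: if some $r>0$ satisfies $(I)$, then the converse half of Lemma~\ref{isomorphismrs} produces the isomorphism $(x,y)\mapsto(rx,y)$ directly, with no case distinction needed. So the work is entirely in the ``only if'' direction.

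First I would dispose of the case where one of $\mathcal{M}_f,\mathcal{M}_g$ --- hence, since they are assumed isomorphic, both --- is isomorphic to the classical flat Minkowski plane. By Theorem~\ref{isomorphictoclassical} this forces $f_i(x)=g_i(x)=1/x$ for $i=1,\dots,4$, and then, taking $r=1$, every equation of the system $(I)$ reduces to a triviality (all four functions being $x\mapsto1/x$), so $(I)$ holds.

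Next I would assume neither plane is classical and take an isomorphism $\phi\colon\mathcal{M}_f\to\mathcal{M}_g$. By Lemma~\ref{isomorphismnotfixinfinity}, $\phi$ must fix $(\infty,\infty)$; composing with a translation of $\mathbb{R}^2$, which is an automorphism of $\mathcal{M}_g$, I may also assume $\phi$ fixes $(0,0)$. As recorded in the discussion preceding Lemma~\ref{isomorphismrs}, $\phi$ is then given by a matrix in $\mathrm{GL}(2,\mathbb{R})$ that, because $\phi$ permutes the two parallelisms, is diagonal or anti-diagonal. Splitting off the signs of the entries together with a possible interchange of the two coordinates --- that is, a factor from $\mathfrak{A}\cup\mathfrak{A}'$, whose effect on the defining quadruple $(f_1,f_2,f_3,f_4)$ is listed explicitly in Lemmas~\ref{isomorphismflip} and~\ref{isomorphismA} --- leaves the scaling $(x,y)\mapsto(rx,sy)$ with $r,s>0$. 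Lemma~\ref{isomorphismrs} then says that this scaling is an isomorphism onto $\mathcal{M}_g$ precisely when $(I)$ holds for that $r$, which delivers the desired $r$ and completes the ``only if'' direction.

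I do not expect any analytic difficulty here --- all the substantive estimates were discharged in the earlier lemmas --- so the one point that actually requires care is the bookkeeping behind ``up to isomorphisms in $\mathfrak{A}$ and $\mathfrak{A}'$'': one has to confirm that each map $(x,y)\mapsto(x,y)A$ with $A\in\mathfrak{A}$, and each such map precomposed with $(x,y)\mapsto(y,x)$, sends a plane of the family to another plane of the family, with new defining functions read off from $(A1)$--$(A4)$ and $(II)$. Once that is in place, peeling such a map off and applying Lemma~\ref{isomorphismrs} to what remains closes the argument.
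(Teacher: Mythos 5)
Your proposal is correct and follows essentially the same route as the paper: dispose of the classical case via Theorem \ref{isomorphictoclassical}, use Lemma \ref{isomorphismnotfixinfinity} plus a translation to reduce to a diagonal or anti-diagonal matrix, factor out an element of $\mathfrak{A}$ or $\mathfrak{A}'$ (Lemmas \ref{isomorphismflip} and \ref{isomorphismA}), and apply Lemma \ref{isomorphismrs} to the remaining positive scaling. Your explicit check that $(I)$ holds with $r=1$ in the classical case, and your note that the ``if'' direction follows from the converse half of Lemma \ref{isomorphismrs}, only make explicit what the paper leaves implicit.
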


From Theorem \ref{isomorphismclasses}, we can determine when a flat Minkowski  plane $\mathcal{M}_f$ is isomorphic to    a generalised Hartmann plane  (cp. Example \ref{ex:hartmann}) as follows.

\begin{theorem} \label{isomorphictoGH} Up to isomorphisms in $\mathfrak{A}$ and $\mathfrak{A}'$, a proper normalised flat Minkowski plane  $\mathcal{M}_f$ is isomorphic to  a generalised Hartmann plane $\mathcal{M}_{GH}(r_1,s_1;r_2,s_2)$  if and only if $f_1(x)=x^{-r_1},f_2(x)=s_1^{-1}x^{-r_1},  f_3(x)=x^{-r_2}, f_4(x)=s_2^{-1}x^{-r_2}$, where $r_1,s_1,r_2,s_2>0$, $(r_1,s_1,r_2,s_2) \ne (1,1,1,1)$.
\end{theorem}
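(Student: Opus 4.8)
The plan is to reduce the problem to a direct comparison of circle sets using the isomorphism classification already in hand (Theorem \ref{isomorphismclasses}), and then to identify precisely which normalised data $(f_1,f_2,f_3,f_4)$ can present a generalised Hartmann plane. First I would recall from Example \ref{ex:hartmann} that the circle set of $\mathcal{M}_{GH}(r_1,s_1;r_2,s_2)$ consists of lines of negative slope (extended by $(\infty,\infty)$) together with the sets $\{(x, a/f_{r_i,s_i}(x-b) + c)\}$; the key observation is that $1/f_{r,s}$ is exactly the function which on the positive half-axis is $x^{-r}$ and on the negative half-axis is $-s^{-1}|x|^{-r}$. Matching this against the definition of $f_{a,b,c}$ built from a pair of strongly hyperbolic functions $(h_1,h_2)$ via $h_1(x+b)$ for $x>-b$ and $-h_2(-x-b)$ for $x<-b$, one sees that a generalised Hartmann plane is, up to the standard representation conventions, precisely a plane $\mathcal{M}_g$ of our type with $g_1(x)=x^{-r_1}$, $g_2(x)=s_1^{-1}x^{-r_1}$, $g_3(x)=x^{-r_2}$, $g_4(x)=s_2^{-1}x^{-r_2}$. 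These $g_i$ are indeed strongly hyperbolic (e.g. by the computations behind Example \ref{exampleSH}(1), the powers $x^{-r}$ satisfy all five conditions of Definition \ref{stronglyhyperbolic}), and the normalisation $g_1(1)=g_3(1)=1$ holds, so this really is a legitimate normalised plane of the family.

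The ``if'' direction is then immediate: if $f_i$ has the stated form, $\mathcal{M}_f$ literally coincides with the $\mathcal{M}_g$ just described, hence is isomorphic to $\mathcal{M}_{GH}(r_1,s_1;r_2,s_2)$. For the ``only if'' direction, suppose $\mathcal{M}_f$ is isomorphic to some $\mathcal{M}_{GH}(r_1,s_1;r_2,s_2)$, which we write in our normalised presentation as $\mathcal{M}_g$ with the $g_i$ above. Since $\mathcal{M}_f$ is assumed proper (nonclassical), Theorem \ref{isomorphictoGH}'s hypothesis together with Theorem \ref{isomorphismclasses} tells us that, up to replacing $\phi$ by a composite with an element of $\mathfrak{A}$ or $\mathfrak{A}'$, there is $r>0$ with system $(I)$ of Lemma \ref{isomorphismrs} relating $(f_1,f_2,f_3,f_4)$ and $(g_1,g_2,g_3,g_4)$. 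Inverting the equations of $(I)$ — i.e. $f_1(x)=f_1(1/r)\,g_1(rx)$, etc., together with the normalisations $f_1(1)=f_3(1)=1$ — and substituting $g_i(x)=x^{-r_i}$ or $s_i^{-1}x^{-r_i}$ shows $f_1(x)$ is a constant multiple of $(rx)^{-r_1}=r^{-r_1}x^{-r_1}$; the normalisation $f_1(1)=1$ pins the constant and forces $f_1(x)=x^{-r_1}$, and likewise $f_3(x)=x^{-r_2}$, while $f_2$ and $f_4$ come out as $s_1^{-1}x^{-r_1}$ and $s_2^{-1}x^{-r_2}$ after absorbing the scaling factors from $(I)$. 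One then checks that the elements of $\mathfrak{A}$ and $\mathfrak{A}'$ used to normalise $\phi$ act on the data exactly by the substitutions recorded in Lemmas \ref{isomorphismflip} and \ref{isomorphismA}, each of which preserves the shape ``power function up to a positive constant'' (passing between $r_1$ and $r_2$, and replacing $s_i$ by $s_i^{-1}$ where appropriate), so the conclusion survives modulo the allowed ambiguity in $(r_1,s_1,r_2,s_2)$. Finally $(r_1,s_1,r_2,s_2)\ne(1,1,1,1)$ since otherwise $f_i(x)=1/x$ and $\mathcal{M}_f$ would be classical by Theorem \ref{isomorphictoclassical}, contradicting properness.

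The bookkeeping with $\mathfrak{A}$ and $\mathfrak{A}'$ is the only place that requires care: one must confirm that the four substitutions in Lemma \ref{isomorphismA} and the flip in Lemma \ref{isomorphismflip} send a quadruple of the form $(x^{-r_1}, s_1^{-1}x^{-r_1}, x^{-r_2}, s_2^{-1}x^{-r_2})$ to another quadruple of the same form (with possibly permuted/inverted parameters), and then re-normalise. I expect this to be the main obstacle — not because it is deep, but because it involves tracking several constant factors through $f^{-1}$ and through the reciprocal-type formulas, and one has to be sure the normalisation $f_1(1)=f_3(1)=1$ can always be restored by a further diagonal map from Lemma \ref{isomorphismrs} without disturbing the power-function form. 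Everything else (verifying the $g_i$ are strongly hyperbolic, inverting $(I)$) is routine. I would organise the write-up as: (1) the trivial ``if'' direction; (2) solving $(I)$ for the $f_i$ given power-function $g_i$; (3) the $\mathfrak{A}$/$\mathfrak{A}'$ invariance check; (4) the exclusion of the classical parameter.
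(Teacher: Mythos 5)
Your proposal is correct and follows exactly the route the paper intends: the paper gives no written proof of this theorem beyond the remark that it follows from Theorem \ref{isomorphismclasses} together with Example \ref{ex:hartmann}, and your outline (recognise $\mathcal{M}_{GH}$ as the normalised plane $\mathcal{M}_g$ with power-function data, then invert system $(I)$ and track the $\mathfrak{A}$, $\mathfrak{A}'$ substitutions) is precisely the computation being left to the reader. The only caveat worth noting is that the exact placement of $s_i$ versus $s_i^{-1}$ depends on sign conventions absorbed by $\mathfrak{A}$ and $\mathfrak{A}'$, which you already flag as the bookkeeping to be checked.
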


From previous results, we obtain the following group dimension classification.
 
\begin{theorem}\label{SHclassification} A normalised flat Minkowski plane $\mathcal{M}_f=\mathcal{M}(f_1,f_2;f_3,f_4)$ has group dimension
	\begin{enumerate}[label=$ $]
		
		\item 6 if and only if  $f_i(x)=1/x$, for $i=1..4$;
		
		\item 4 if and only if $f_1(x)=x^{-r_1}, f_2(x)=s_1^{-1}x^{-r_1},  f_3(x)=x^{-r_2}, f_4(x)=s_2^{-1}x^{-r_2}$,  where $r_1,s_1,r_2,s_2 \in \mathbb{R}^+$, and $(r_1,s_1,r_2,s_2) \ne (1,1,1,1)$;
		
		\item 3 in all other cases. 
	\end{enumerate}
	
	\begin{proof} Let $n$ be the group dimension of  $\mathcal{M}_f$. Then $n \ge 3$, since it admits the group $\Phi_\infty$ as a group of automorphisms. 
		By Theorem \ref{introtheorem2}, $n \ge5$ if and only if $n=6$ if and only if $\mathcal{M}_f$ is isomorphic to the classical Minkowski plane. The form of $f_i$ follows from Theorem \ref{isomorphictoclassical}.  Furthermore, $n=4$  if and only if $\mathcal{M}_f$ is  isomorphic to either a proper half-classical plane $\mathcal{M}(f,id)$ or a proper generalized Hartmann plane $\mathcal{M}(r_1,s_1;r_2,s_2)$. The former case cannot occur, however,  since the plane  $\mathcal{M}(f,id)$ does not admit $\Phi_\infty$ as a group of automorphisms. In the latter case, the form of $f_i$ follows from Theorem \ref{isomorphictoGH}. 
	\end{proof}
\end{theorem}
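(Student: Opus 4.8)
The plan is to read off the classification from the isomorphism results already established, combined with the general structure theorem for flat Minkowski planes of group dimension at least~$4$. First I would record the lower bound: every plane $\mathcal{M}_f$ admits $\Phi_\infty$ as a group of automorphisms by construction, so $n \geq 3$ always. The key fact I would invoke next is Theorem~\ref{introtheorem2}: a flat Minkowski plane with $n \geq 4$ is either classical ($n=6$), a proper generalised Hartmann plane ($n=4$), or a proper half-classical plane $\mathcal{M}_{HC}(f,\mathrm{id})$ ($n=4$). In particular there is no plane with $n=5$, so ``$n \geq 5$'' is equivalent to ``$n=6$''.

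Next I would handle the $n=6$ case. By Theorem~\ref{introtheorem2}, $n=6$ iff $\mathcal{M}_f \cong \mathcal{M}_C$, and by Theorem~\ref{isomorphictoclassical} this happens exactly when $f_i(x)=1/x$ for all $i=1,\dots,4$. For the $n=4$ case, I would first eliminate the half-classical possibility: $\mathcal{M}_{HC}(f,\mathrm{id})$ does not admit $\Phi_\infty$ as a group of automorphisms (this observation already appears in the proof of Lemma~\ref{isomorphismnotfixinfinity}), whereas every $\mathcal{M}_f$ does; hence $\mathcal{M}_f$ cannot be a proper half-classical plane. Therefore $n=4$ forces $\mathcal{M}_f$ to be isomorphic to a proper generalised Hartmann plane $\mathcal{M}_{GH}(r_1,s_1;r_2,s_2)$ with $(r_1,s_1,r_2,s_2)\neq(1,1,1,1)$, and Theorem~\ref{isomorphictoGH} then pins down the $f_i$ exactly as in the statement. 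Finally, ``$n=3$ in all other cases'' is just the complement: since $n\geq 3$ always, and the cases $n=6$ and $n=4$ have been characterised by explicit conditions on the $f_i$, any $\mathcal{M}_f$ not of either listed form must have $n=3$.

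The only genuinely delicate point in this argument is the claim that a proper half-classical plane $\mathcal{M}_{HC}(f,\mathrm{id})$ does not admit $\Phi_\infty$; everything else is bookkeeping on top of the cited theorems. I would either cite the relevant structural description of the automorphism group of $\mathcal{M}_{HC}(f,\mathrm{id})$ (its connected component being essentially $\mathrm{PSL}(2,\mathbb{R})$ acting on the first coordinate together with a one-parameter stabiliser action, which does not contain the full affine action $y \mapsto ay+c$ on the second coordinate), or simply refer back to the argument already used inside the proof of Lemma~\ref{isomorphismnotfixinfinity}, since the present theorem's proof reuses exactly that fact. With that in hand the three cases are mutually exclusive and exhaustive, and the proof closes.
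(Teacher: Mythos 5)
Your proposal follows essentially the same route as the paper's proof: the lower bound $n\ge 3$ from $\Phi_\infty$, Theorem \ref{introtheorem2} to restrict the cases $n\ge 4$, elimination of the half-classical possibility because $\mathcal{M}_{HC}(f,\mathrm{id})$ does not admit $\Phi_\infty$, and Theorems \ref{isomorphictoclassical} and \ref{isomorphictoGH} to pin down the $f_i$. The only difference is that you flag and briefly justify the half-classical exclusion, which the paper simply asserts; this is a reasonable extra precaution but not a different argument.
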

 
Regarding the Klein-Kroll types, we obtain the following.
\begin{theorem} \label{SHkleinkroll} A flat Minkowski plane $\mathcal{M}_f$ has Klein-Kroll type
	\begin{enumerate}[label=$ $]
		
		\item \text{\normalfont VII.F.23} if it is isomorphic to the classical flat Minkowski plane;
		
		\item \text{\normalfont III.C.19} if it is isomorphic to a  generalised Hartmann plane $\mathcal{M}_{GH}(r, 1; r, 1), r \ne 1$;
		
		\item \text{\normalfont III.C.1 }  in all other cases.
	\end{enumerate}
	\begin{proof}  Let $p=(\infty,\infty)$.  The connected component $\Sigma$ of the full automorphism group  $\text{Aut}(\mathcal{M}_f)$ has a subgroup 
		$  \{\mathbb{R}^2 \rightarrow \mathbb{R}^2: (x,y) \mapsto (x+a,y+b) \mid a,b \in \mathbb{R} \}
		$
		of Euclidean translations, which is transitive on $\mathcal{P} \backslash \{ [p]_+ \cup [p]_- \}$.   
		Hence, a   plane  $\mathcal{M}_f$ has Klein-Kroll type at least III.C. 
		
		By Theorem \ref{possiblekleinkroll}, the only possible types are 	III.C.1, III.C.18, III.C.19,    or VII.F.23. By Theorem \ref{specifickleinkroll}, if $\mathcal{M}_f$ is of type III.C.18, then it  is isomorphic to  an Artzy-Groh plane with group dimension $3$. In this case,
		\[ 
		\Sigma =  
		\{ (x,y) \mapsto (ax+b,ay+c) \mid a,b,c\in \mathbb{R}, a>0\}.   
		\]
		On the other hand, $\Sigma$ contains the group $\Phi_\infty$ and since $\dim \Sigma = \dim \Phi_\infty$, it follows that $\Sigma = \Phi_0$, which is  a contradiction. The proof now follows from Theorem \ref{specifickleinkroll}. \qedhere

	\end{proof}
\end{theorem}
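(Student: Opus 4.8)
The plan is to sandwich the Klein--Kroll type of $\mathcal{M}_f$ between a lower bound coming from the Euclidean translations and the list of all admissible types, and then to eliminate the single leftover case by a Lie--group argument. First I would set $p=(\infty,\infty)$ and note that the two-parameter group of Euclidean translations $\{(x,y)\mapsto(x+a,y+b)\mid a,b\in\mathbb{R}\}$ is precisely the subgroup of $\Phi_\infty$ obtained by setting the $y$-scaling equal to $1$, hence it lies in $\Sigma_f$. Its elements are central automorphisms fixing $p$ with the appropriate centre--axis figure, and the group is linearly transitive on $\mathcal{P}\backslash([p]_+\cup[p]_-)=\mathbb{R}^2$. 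Therefore $\mathcal{M}_f$ has Klein--Kroll type at least \textnormal{III.C}, and comparing with Theorem~\ref{possiblekleinkroll} the only surviving possibilities are \textnormal{III.C.1}, \textnormal{III.C.18}, \textnormal{III.C.19} and \textnormal{VII.F.23}.

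The key step is to rule out type \textnormal{III.C.18}. Suppose $\mathcal{M}_f$ had this type. By Theorem~\ref{specifickleinkroll} it would then be isomorphic to an Artzy--Groh plane of group dimension $3$, whose connected automorphism group is $\Phi_0:=\{(x,y)\mapsto(ax+b,ay+c)\mid a>0,\ b,c\in\mathbb{R}\}$, so in particular $\Sigma_f\cong\Phi_0$ as Lie groups and $\dim\Sigma_f=3$. On the other hand $\Sigma_f$ is connected and contains the connected three-dimensional group $\Phi_\infty$, so $\Sigma_f=\Phi_\infty$ and hence $\Phi_\infty\cong\Phi_0$. This is impossible, since $\Phi_\infty$ and $\Phi_0$ are non-isomorphic: their commutator subgroups --- the one-parameter group of $y$-translations $\{(x,y)\mapsto(x,y+c)\}$ inside $\Phi_\infty$ versus the full two-parameter translation group inside $\Phi_0$ --- have different dimensions. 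Hence type \textnormal{III.C.18} cannot occur, and the type of $\mathcal{M}_f$ is one of \textnormal{III.C.1}, \textnormal{III.C.19}, \textnormal{VII.F.23}.

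It remains to match these three types with the three clauses. By Theorem~\ref{specifickleinkroll}, a plane of type \textnormal{VII.F.23} is classical and a plane of type \textnormal{III.C.19} is a proper generalised Hartmann plane $\mathcal{M}_{GH}(r,1;r,1)$ with $r\neq1$; taking contrapositives, if $\mathcal{M}_f$ is neither classical nor such a Hartmann plane, then its type must be \textnormal{III.C.1}. For the two ``if'' clauses I would invoke that the classical flat Minkowski plane has Klein--Kroll type \textnormal{VII.F.23} and that $\mathcal{M}_{GH}(r,1;r,1)$ with $r\neq1$ has type \textnormal{III.C.19} --- these are the characterisations underlying Theorem~\ref{specifickleinkroll} --- together with the fact that the Klein--Kroll type is an isomorphism invariant.

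I expect the elimination of type \textnormal{III.C.18} to be the main obstacle: it hinges on identifying the connected automorphism group of the relevant Artzy--Groh plane sharply enough to see that, as an abstract (or topological) group, it differs from $\Phi_\infty$, and on the dimension count $\dim\Sigma_f=\dim\Phi_\infty=3$ that forces $\Sigma_f=\Phi_\infty$. Everything else is essentially bookkeeping with the Klein--Kroll tables of Theorems~\ref{possiblekleinkroll} and~\ref{specifickleinkroll} and with the (routine) verification that the translations of $\mathbb{R}^2$ act as automorphisms of every $\mathcal{M}_f$.
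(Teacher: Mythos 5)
Your proposal is correct and follows essentially the same route as the paper: the Euclidean translations give Klein--Kroll type at least III.C, Theorem~\ref{possiblekleinkroll} narrows the possibilities to III.C.1, III.C.18, III.C.19 and VII.F.23, and type III.C.18 is excluded by comparing $\Sigma_f$ with the automorphism group of an Artzy--Groh plane of group dimension $3$. Your only deviation is in the final contradiction, where you argue via an abstract Lie-group isomorphism $\Phi_\infty\cong\Phi_0$ and distinguish the two groups by the dimension of their commutator subgroups ($1$ versus $2$), rather than asserting literal equality of transformation groups as the paper does; this is in fact slightly more careful, since being isomorphic to an Artzy--Groh plane only makes $\Sigma_f$ conjugate to, not equal to, the group $\{(x,y)\mapsto(ax+b,ay+c)\mid a>0\}$.
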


  \paragraph*{Acknowledgements} 
  This work was supported by a University of Canterbury Doctoral Scholarship and  partially  by UAEU grant G00003490.

\printbibliography


\end{document}